\newtheorem{theorem}{Theorem}[section] 
\newtheorem{lemma}[theorem]{Lemma}     
\newtheorem{corollary}[theorem]{Corollary}
\newtheorem{proposition}[theorem]{Proposition}
\theoremstyle{definition}
\newtheorem{definition}[theorem]{Definition}
\newtheorem{definitions}[theorem]{Definitions}
\newtheorem{remark}[theorem]{Remark}
\newtheorem{purpose}[theorem]{Purpose}
\newtheorem{example}[theorem]{Example}
\newtheorem{notation}[theorem]{Notation} 
\newtheorem{assumption}[theorem]{Assumption}
\newtheorem{discussion}[theorem]{Discussion}
\newcommand{\mbn}{\mbox{$\mathbb{N}$}}
\newcommand{\mbz}{\mbox{$\mathbb{Z}$}}
\newcommand{\mbq}{\mbox{$\mathbb{Q}$}}
\newcommand{\mbr}{\mbox{$\mathbb{R}$}}
\newcommand{\mbc}{\mbox{$\mathbb{C}$}}
\newcommand{\mbk}{\mbox{$\mathbb{K}$}}
\newcommand{\mba}{\mbox{$\mathbb{A}$}}
\newcommand{\mbg}{\mbox{$\mathbb{G}$}}
\newcommand{\mbv}{\mbox{$\mathbb{V}$}}
\newcommand{\mbe}{\mbox{$\mathbb{E}$}}
\newcommand{\mbone}{\mbox{$1\mkern-4.6mu {\rm l}$}}
\newcommand{\lc}{\mbox{${\bf L}{\rm c}$}}
\newcommand{\lm}{\mbox{${\bf L}{\rm m}$}}
\newcommand{\lmod}{\mbox{${\bf L}$}}
\newcommand{\lt}{\mbox{${\bf L}{\rm t}$}}
\newcommand{\mcb}{\mbox{$\mathcal{B}$}}
\newcommand{\mcc}{\mbox{$\mathcal{C}$}}
\newcommand{\mcl}{\mbox{$\mathcal{L}$}}
\newcommand{\mcm}{\mbox{$\mathcal{M}$}}
\newcommand{\icb}{\mbox{$\mathrm{ICB}$}}
\newcommand{\pcb}{\mbox{$\mathrm{PCB}$}}
\newcommand{\cb}{\mbox{$\mathrm{CB}$}}
\newcommand{\mfm}{\mbox{$\mathfrak{m}$}}
\newcommand{\mbfh}{\mbox{${\bf H}$}}
\newcommand{\ud}{\mbox{${\rm d}^\vee$}}
\newcommand{\dif}{\mbox{$\partial$}}
\newcommand{\sdif}{\mbox{\tiny $\partial$}}
\newcommand{\I}{\mbox{${\rm I}$}}
\newcommand{\syz}{\mbox{${\rm Syz}$}}
\newcommand{\Lcm}{\mbox{${\rm LCM}$}}
\newcommand{\rad}{\mbox{${\rm rad}$}}
\newcommand{\pd}{\mbox{${\rm projdim}$}}
\newcommand{\grade}{\mbox{${\rm grade}$}}
\newcommand{\rank}{\mbox{${\rm rank}$}}
\newcommand{\nulls}{\mbox{${\rm Nullspace}$}}
\newcommand{\adj}{\mbox{${\rm adj}$}}
\newcommand{\cyc}{\mbox{${\rm Cyc}$}}
\newcommand{\hull}{\mbox{${\rm Hull}$}}
\newcommand{\srle}{\mbox{${\rm srle}$}}
\newcommand{\wrlo}{\mbox{${\rm wrlo}$}}
\title{Minimal free resolutions of lattice ideals of digraphs}
\author{Liam O'Carroll and Francesc Planas-Vilanova}
\subjclass[2010]{13D02,13P10,05C25,05C50,05EXX}
\begin{document}

\begin{abstract}
Based upon a previous work of Manjunath and Sturmfels for a finite,
complete, undirected graph, and a refined algorithm by Er\"ocal,
Motsak, Schreyer and Steenpa{\ss} for computing syzygies, we display a
free resolution of the lattice ideal associated to a finite, strongly
connected, weighted, directed graph. Moreover, the resolution is
minimal precisely when the digraph is strongly complete.
\end{abstract}

\maketitle 

\section{Introduction}

The aim of this paper is to present a free resolution of the lattice
ideal $I(\mcl)$ associated to the lattice $\mcl$ spanned by the
columns of the Laplacian matrix $L$ of a finite, strongly connected,
weighted, directed graph $\mbg$, and to show that this resolution is
minimal if and only if the graph $\mbg$ is strongly complete. Our
techniques are a novel mixture of a mild generalization of previous
work by Manjunath and Sturmfels for finite, complete, weighted,
undirected graphs and a recent refined algorithm for computing
syzygies due to Er\"ocal, Motsak, Schreyer and Steenpa{\ss}. (The
meaning of the terminology we use is presented in
Section~\ref{preliminaries}.)

First we present some background.

The Abelian Sandpile Model (ASM) is a game played on a finite,
weighted, connected, undirected graph $\mathbb{G}$ with $n$ vertices,
that realizes the dynamics implicit in the discrete Laplacian matrix
$L$ of the graph, this matrix being an integer matrix that is
symmetric. Each configuration is a mapping from the vertices of the
graph into the set of non-negative integers. The value of the mapping
at a vertex may be considered as the number of grains of sand on a
sandpile placed at the vertex. The game's evolution is given by a
`toppling' rule: each vertex containing at least as many grains as it
has neighbours distributes one grain to each of them.  (This process
has also been called `sand-firing' or `chip-firing'.)

The ASM was introduced by Bak, Tang and Wiesenfeld \cite{btw} in the
context of self-organized critical phenomena in statistical physics
and has been studied extensively since. In the seminal paper
\cite{crs}, Cori, Rossin and Salvy, enumerated the vertices of the
undirected graph $\mbg$ using a natural metric, considered the lattice
$\mcl$ spanned over $\mbz$ by the rows of the symmetric Laplacian
matrix $L$ and introduced a so-called toppling (or lattice) ideal
$I(\mcl)$ in the polynomial ring $\mbq[x]:=\mbq[x_1,\ldots
  ,x_n]$. This ideal encodes configurations with monomials and
topplings with binomials. In particular, they showed that the set of
toppling binomials constituted a Gr\"obner basis for $I(\mcl)$ with
respect to a natural monomial ordering in $\mbq[x]$, which may be
taken to be the graded reverse lexicographic ordering.

In their recent survey paper \cite{ppw}, Perkinson, Perlman and Wilmes
associated to the Laplacian matrix $L$ of a directed graph $\mbg$ a
collection of binomial equations, and noted that these binomials span
the lattice ideal $I(\mcl)$ in $\mbc[x_1,\ldots ,x_n]$, where $\mcl$
again is the lattice spanned over $\mbz$ by the rows of the (now
non-necessarily symmetric) Laplacian matrix $L$, using the theory of
lattice ideals extensively in their discussion.

More recently, and for complete undirected graphs, i.e., any
two distinct vertices are connected by an edge, Manjunath and
Sturmfels \cite{ms} gave a minimal free resolution of
$\mbk[x]/I(\mcl)$, $\mbk$ an arbitrary field, showing also
that the binomials in question formed a minimal generating set and a
minimal Gr\"obner basis of $I(\mcl)$ in the graded reverse
lexicographic ordering. Subsequently, Manjunath, Schreyer and Wilmes
\cite{msw}, and Mohammadi and Shokrieh \cite{mos}, gave a minimal free
resolution of $\mbk[x]/I(\mcl)$ in the case where the underlying
undirected graph was, more generally, connected. We remark here that
the techniques used in these papers are a mix of results from
combinatorial graph theory and the theory of Gr\"obner bases (e.g., the
Schreyer algorithm as a tool to proving the exactness of a complex).

The transition from undirected to directed graphs introduces extra
technicalities, and means that we have to rely heavily on techniques
from the theory of Gr\"obner bases and homogeneous Commutative
Algebra. Our general setting is the following: $\mbg$ is a finite,
strongly connected, weighted, directed graph, $L$ stands for its
Laplacian matrix, $\mcl$ is the lattice spanned by the columns of $L$
and $I(\mcl)$ is the lattice ideal associated to $\mcl$.

First of all, we see that, having again enumerated the vertices of the
graph $\mbg$ using a generalized natural metric, $I(\mcl)$ is
homogeneous in a weighted reverse lexicographic order on $\mbk[x]$,
$\mbk$ an arbitrary field, the weighting coming from the structure of
$\adj(L)$ (see Remark~\ref{ILhomogeneous}). We then use basic aspects
of homogeneous Commutative Algebra and previous work of ours to show
that $I(\mcl)$ is a Cohen-Macaulay ideal of dimension $1$, so that
$\pd_{\mathbb{K}[x]}(\mbk[x]/I(\mcl))=n-1$: see
Corollary~\ref{projdim}.

This earlier work shows up as follows. We find (see
Sections~\ref{pcbcb}, \ref{ibm}) that the matrices $L$ we deal with
are what we called Critical Binomial Matrices ($\cb$ matrices, in
brief) in previous work, and that $\mbg$ is, further, strongly
complete precisely when the matrices $L$ are what we previously called
Positive Critical Binomial Matrices ($\pcb$ matrices, in brief): see
\cite{op}, \cite{opv}. In fact, as we shall see in
Section~\ref{preliminaries}, since we treat the case where $\mbg$ is
strongly connected, the $\cb$ matrix $L$ has a further property which
is precisely Irreducibility, hence the matrices $L$ are Irreducible
Critical Binomial Matrices ($\icb$ matrices, in brief), whose
properties are developed in the paper: see, for example,
Sections~\ref{ibm-adj}, \ref{hom-scd}, \ref{block-icb}. Wherever
possible, we give new proofs and develop new approaches to this
previous work: see, for example, Propositions~\ref{adj(L)rowconstant},
\ref{radical}.

There are two main ingredients in the proof of our main theorem
(Theorem~\ref{main}). The first is the fact that the resolving complex
$\cyc_n$ of \cite{ms} (treated with a little care in our more general
context, as the Laplacian matrix $L$ need no longer be symmetric)
continues to be a complex and, moreover, is exact in degree zero: see
Sections~\ref{cyc-complex-cb}, \ref{exact-degree0}.  The second
ingredient is a refined choice of Schreyer syzygies: see
Section~\ref{schreyer-alg}. We find, via an inductive proof that keeps
track of the leading terms of the Manjunath-Sturmfels boundaries, that
in a very satisfying yet surprising manner, these boundaries pair off
with appropriate Schreyer syzygies, enabling one to deduce in
Theorem~\ref{main} that $\cyc_n$ is in fact acyclic.  It then follows
easily that $\cyc_n$ is minimal if and only if $\mbg$ is, further,
strongly complete (see Corollary~\ref{min-resolution}).

We finish this introduction by emphasizing that, usually, and
alternatively to the works of, e.g., Perkinson, Perlman and Wilmes in
\cite{ppw}, or Backman and Manjunath in \cite{bm}, we have taken
$\mcl$ to be the lattice spanned by the columns of the Laplacian
matrix $L$ and not the rows (see Sections~\ref{general} and
\ref{dictionary}). Note that, for the undirected graph case this is an
irrelevant matter, because $L$ being a symmetric matrix, then the
lattice spanned by the columns or the lattice spanned by the rows do
coincide. It is worth to mention here that studying the case of the
lattice spanned by the columns can be related to a column chip-firing
game. This is done by Asadi and Backman in \cite[Introduction and
  Section~3.2]{ab}. An interesting future research would be to pass
from columns to rows. As a possible approach one could consider the
lattice $\mcl^{\top}$, spanned by the columns of the transpose of the
Laplacian matrix $L^{\top}$, and then proceed in a similar vein as in
\cite{opv}. See also the connection with Eulerian digraphs (the
indegree and outdegree coincide for every vertex) in the
aforementioned work of Asadi and Backman (\cite[Theorem~3.18]{ab}).

Throughout, we use the explicit case $n=4$ as a running illustrative
example.

\bigskip 

{\em Liam O'Carroll died on October 2017, while this manuscript was under
revision. Liam and I started to work together several years ago. To
me, all these years of collaboration with him have been, above all,
years of enjoying our friendship. I will greatly miss him.}

\section{Notations and Preliminaries}\label{preliminaries}

\subsection{General setting}\label{general}
$\phantom{+}$\bigskip 

The following notations will remain in force throughout the paper.  We
denote the set of non-negative integers by $\mbn$ and the set of
positive integers by $\mbn_+$. Let $n$ be a positive integer with
$n\geq 3$, to avoid trivialities, and let $[n]=\{1,\ldots ,n\}$. Any
vector $\alpha\in\mbz^n$ can be written uniquely as
$\alpha=\alpha^+-\alpha^-$, where $\alpha^+=\max(\alpha,0)\in\mbn^n$
and $\alpha^-=-\min(\alpha,0)\in\mbn^n$ have disjoint support. Set
$\mbone=(1,\ldots ,1)$. 

Let $\mbk$ be a field and let $\mba=\mbk[x_1,\ldots ,x_n]=\mbk[x]$ be the
polynomial ring over $\mbk$ in indeterminates $x_1,\ldots ,x_n$. The
maximal ideal generated by $x_1,\ldots ,x_n$ will be denoted
$\mfm=(x_1,\ldots ,x_n)$.  For any $\alpha=(\alpha_1,\ldots
,\alpha_n)\in\mbn^n$, we write $x^\alpha$ as shorthand for
$x_1^{\alpha_1}\cdots x_n^{\alpha_n}$. By a binomial in $\mba$, we
understand a polynomial of $\mba$ with at most two terms, say $\lambda
x^\alpha-\mu x^\beta$, where $\lambda, \mu\in\mbk$ and
$\alpha,\beta\in\mbn^{n}$. A binomial ideal of $\mba$ is an ideal
generated by binomials.

Let $M$ be an $n\times s$ integer matrix, $M=(m_{i,j})$; that is,
$m_{i,j}\in\mbz$, for all $i=1,\ldots ,n$, $j=1,\ldots ,s$. We will
say that $M$ is non-negative, and write $M\geq 0$, if all the entries
in $M$ are non-negative, i.e., $m_{i,j}\geq 0$, for all
$i,j$. Similarly, $M$ is said to be positive, $M>0$, if $m_{i,j}>0$,
for all $i,j$.

We will denote by $m_{i,*}$ and $m_{*,j}$ the $i$-th row and $j$-th
column, repectively, of $M$. We will denote by $\mcm\subset \mbz^n$
the additive subgroup of $\mbz^n$ spanned by the columns of $M$,
commonly called the lattice of $\mbz^{n}$ defined by the columns of
$M$.

Furthermore, $f_{m_{*,j}}=x^{(m_{*,j})^+}-x^{(m_{*,j})^-}$ will denote
the binomial of $\mba$ defined by the $j$-th column of $M$. The ideal
$I(M)=(f_{m_{*,j}}\mid j=1,\ldots ,s)$, generated by the binomials
$f_{m_{*,j}}$, is called the binomial ideal associated to the matrix
$M$. The binomial ideal $I(\mcm)=(x^{m^+}-x^{m^-}\mid m\in\mcm)$ is
called the lattice ideal associated to $\mcm$. Clearly $I(M)\subseteq
I(\mcm)$.

By a permutation of a square $n\times n$ matrix $M$, we understand a
permutation of the rows of $M$ together the same permutation of the
columns. Equivalently, a permutation of $M$ is a square matrix
$P^{\top}MP$, where $P$ is a permutation matrix resulting from a
permutation of the columns of the $n\times n$ identity matrix $\I$.

\subsection{(Positive) critical binomial matrices and ideals}
\label{pcbcb}$\phantom{+}$\bigskip

\noindent Recall that a critical binomial matrix, $\cb$ matrix for
short, is an $n\times n$ integer matrix defined as follows:

\begin{eqnarray}\label{cbmatrix}
L=\left(\begin{array}{rrrr}
a_{1,1}&-a_{1,2}&\ldots&-a_{1,n}\\
-a_{2,1}&a_{2,2}&\ldots&-a_{2,n}\\
\vdots\phantom{+}&\vdots\phantom{+}&&\vdots\phantom{+}\\
-a_{n,1}&-a_{n,2}&\ldots&a_{n,n}
\end{array}\right),
\end{eqnarray}
where $a_{i,i}>0$, $a_{i,i}=\sum_{j\neq i}a_{i,j}$, $a_{i,j}\geq 0$,
for all $i,j=1,\ldots ,n$, and, for each column of $L$, at least one
off-diagonal entry is nonzero (see \cite{opv}). Alternatively, an
$n\times n$ integer matrix $L$ is a $\cb$ matrix whenever $L=D-B$,
where $D$ is a positive, diagonal $n\times n$ integer matrix and $B$
is a non-negative $n\times n$ integer matrix with zero entries in the
diagonal, nonzero columns $b_{*,1},\ldots ,b_{*,n}$, and such that
$(D^{-1}B)\mbone^{\top}=\mbone^{\top}$. In particular, $D^{-1}B$ is an
stochastic matrix. The expression $L=D-B$, which is clearly unique,
will be called the ``$(D,B)$ form of $L$''.

Observe that a permutation $P^{\top}LP$ of a $\cb$ matrix $L$ is again
a $\cb$ matrix. Indeed, writing $L=D-B$ in its $(D,B)$ form, then
$P^{\top}LP=P^{\top}DP-P^{\top}BP$, and $P^{\top}DP>0$ is diagonal and
$P^{\top}BP\geq 0$, with zero entries in the diagonal, nonzero columns
and $(P^{\top}DP)^{-1}(P^{\top}BP)\mbone^{\top}=\mbone^{\top}$.

Since $a_{i,i}>0$ and $b_{*,1},\ldots ,b_{*,n}$ are nonzero, for each
column $l_{*,1},\ldots ,l_{*,n}$ of an $n\times n$ $\cb$ matrix $L$,
the binomial
\begin{eqnarray*}
f_{l_{*,j}}=x^{(l_{*,j})^+}-x^{(l_{*,j})^-}=x_j^{a_{j,j}}-x_1^{a_{1,j}}\cdots
x_{j-1}^{a_{j-1},j}x_{j+1}^{a_{j+1,j}}\cdots x_n^{a_{n,j}},
\end{eqnarray*}
has two non-constant terms. The binomial ideal $I(L)$, generated by
the $f_{l_{*,j}}$, is called the critical binomial ideal, $\cb$ ideal
for short, associated to $L$ (see \cite{opv}).

A special class of $\cb$ matrices is the following. When all the
coefficients $a_{i,j}$ are positive, the matrix $L$ is said to be a
positive critical binomial matrix ($\pcb$ matrix, for short). The
ideal $I(L)$ is called the positive critical binomial ideal ($\pcb$
ideal, for short) associated to $L$. These ideals were extensively
studied in \cite{op} (see also \cite{opv}).

\subsection{Irreducible binomial matrices}\label{ibm}
$\phantom{+}$\bigskip

\noindent A square $n\times n$ matrix $M=(m_{i,j})$ is called
reducible if the set of indices $[n]=\{1,\ldots ,n\}$ can be split
into two non-empty disjoint sets $I,J$ such that $m_{i,j}=0$, for all
$i\in I$ and $j\in J$. The matrix $M$ is called irreducible if it is
not reducible. Equivalently, $M$ is reducible if and only if there
exists a permutation matrix $P$ such that
\begin{eqnarray}\label{reducedform}
\newcommand\y{\cellcolor{gray!40}}
P^{\top}MP=\left(\begin{array}{cc}\y M_{I,I}&0\\
\y M_{J,I}&\y M_{J,J}\end{array}\right),
\end{eqnarray}
where $M_{I,I}$ and $M_{J,J}$ are square matrices (see, e.g.,
\cite[Chapter~XIII, p.50]{gantmacher}; see also \cite[\S~3.2]{br}).
Here we understand $M_{I,J}$ as the submatrix of the matrix $M$
defined by the set of rows $I$ and the set of columns $J$.  Of course,
this is equivalent to saying that there exists a permutation matrix
$Q$ that permutes columns and then corresponding rows, such that
\begin{eqnarray*}
\newcommand\y{\cellcolor{gray!40}}
Q^{\top}MQ=\left(\begin{array}{cc}
\y M_{J,J}&\y M_{J,I}\\0&\y M_{I,I}\end{array}\right).
\end{eqnarray*}

Clearly, adding a diagonal matrix, multiplying by a diagonal matrix,
or transposing, are three operations that preserve the condition of
irreducibility on a matrix.

We will say that $L$ is an irreducible critical binomial matrix
($\icb$ matrix, for short), if $L$ is a $\cb$ matrix which is
irreducible. Write $L=D-B$ in its $(D,B)$ form. If $L$ is irreducible,
then $(D^{-1}B)^{\top}$ is also irreducible, and conversely.

Clearly, we have the chain of implications linking these three
properties: $\pcb\Rightarrow\icb\Rightarrow\cb$. (When $n=2$, the
three classes coincide.) If $n=3$, then the condition $\icb$ is
equivalent to the condition $\cb$, but there are $\icb$ matrices which
are not $\pcb$. If $n=4$, it is easy to see that the three classes are
distinct.

\begin{remark}\label{remarkreducedform}
Let $L$ be a reducible $\cb$ matrix, $n\geq 4$. After a permutation,
we can suppose that $L$ can be written as
\begin{eqnarray}\label{eqreducedform}
\newcommand\y{\cellcolor{gray!40}}
L=\left(\begin{array}{cc} 
\y L_{I,I}&0\\
\y L_{J,I}&\y L_{J,J}
\end{array}\right),
\end{eqnarray}
where $L_{I,I}$ is a square $r\times r$ integer matrix, $L_{J,J}$ is a
square $s\times s$ integer matrix, with $r+s=n$ and $1\leq r,s\leq n$.
Note that $L_{I,I}\mbone^{\top}=0$. In particular, since the sum of
the entries of the first row is zero and $a_{1,1}>0$, then $r\geq
2$. Similarly, $s\geq 2$, otherwise the last column of $L$ would not
have an off-diagonal nonzero entry.
\end{remark}

\subsection{The digraphs we will deal with}
$\phantom{+}$\bigskip

\noindent All graphs $\mbg=(\mbv,\mbe,w)$ we will deal with are
assumed to be finite, weighted, directed graphs. That is,
$\mbv=\{v_1,\ldots ,v_n\}$ is the finite set of vertices; $\mbe$ is
the set of directed arcs of $\mbg$, which are ordered pairs
$(v_i,v_j)$, with $i\neq j$; and $w$ is a weight function that
associates to a every arc $e$ in $\mbe$ a weight
$w_e\in\mbn_+$. Moreover, we will always assume that $\mbg$ has no
loops, sources or sinks. That is, there are no arcs from $v_i$ to
$v_i$, for any $i$; and for every vertex $v_i$, there exists at least
a $j\in\{1,\ldots,n\}\setminus\{i\}$ and an arc $(v_j,v_i)\in \mbe$,
and there exists at least a $k\in\{1,\ldots,n\}\setminus\{i\}$ and an
arc $(v_i,v_k)\in \mbe$.  (See, e.g., \cite{br} or \cite{gr}, for all
unexplained notations on graph theory.)

\begin{assumption}\label{digraph}
Thus, from now on, a digraph $\mbg$ will mean a finite, weighted,
directed graph, without loops, sources or sinks.
\end{assumption}

A digraph $\mbg$ is said to be strongly connected if any two
(distinct) vertices can be joined by a (directed) path. A directed
path from vertex $v_i$ to a distinct vertex $v_j$ will be denoted by
$v_i\to\cdots\to v_j$. The {\em unweighted length} of the path
$v_i\to\cdots\to v_j$ is the number of arcs which constitute it,
without reference to their weights.  The {\em unweighted distance}
$\ud(v_i,v_j)$ between distinct vertices $v_i$ and $v_j$ is defined to
be the minimum unweighted length of a directed path connecting $v_i$
with $v_j$. Thus, $\ud(v_i,v_j)=1$ if and only if $(v_i,v_j)\in\mbe$.
It may happen that $\ud(v_i,v_j)\neq \ud(v_j,v_i)$.

A directed graph $\mbg$ is said to be strongly complete if any two
(distinct) vertices can be joined by an arc, that is, $(v_i,v_j)\in
\mbe$, for all $i,j=1,\ldots ,n$, $i\neq j$.

The {\em adjacency matrix} $A(\mbg)$ of $\mbg$ is the $n\times n$
integer matrix given by $A(\mbg)_{i,j}=w_e$ if $e=(v_i,v_j)\in \mbe$,
and $A(\mbg)_{i,j}=0$ otherwise. Since $\mbg$ has no loops,
$A(\mbg)_{i,i}=0$; since $\mbg$ has no sources, for each column of
$A(\mbg)$, at least one off-diagonal entry is nonzero; since $\mbg$
has no sinks, for each row of $A(\mbg)$, at least one off-diagonal
entry is nonzero. The {\em (out)degree matrix} $D(\mbg)$ of $\mbg$ is
the $n\times n$ integer diagonal matrix defined by
$D(\mbg)_{i,i}=\sum_{e\in O(v_i)}w_e$, the weighted (out)degree of the
vertex $v_i$, where $O(v_i)$ is the set of vertices $v_j$, $j\neq i$,
such that $(v_i,v_j)\in \mbe$. Since $\mbg$ has no sinks,
$D(\mbg)_{i,i}\neq 0$ for every $i=1,\ldots, n$. The {\em Laplacian
  matrix} $L(\mbg)$ of $\mbg$ is defined as $L(\mbg)=D(\mbg)-A(\mbg)$.

\subsection{Our dictionary: CB matrices - digraphs}\label{dictionary}
$\phantom{+}$\bigskip

\noindent Clearly, the Laplacian matrix $L(\mbg)$ of a digraph $\mbg$
is a $\cb$ matrix, sometimes also denoted $L_{\mathbb{G}}$ (recall
that we are using Assumption~\ref{digraph}). Conversely, given an
$n\times n$ $\cb$ matrix $L=D-B$, one can define a digraph $\mbg_L$ of
$n$ vertices $\mbv=\{v_1,\ldots ,v_n\}$ with adjacency matrix $B$ in
an obvious way, namely, there exists a directed edge $v_i\to v_j$
between distinct vertices $v_i$ and $v_j$ if and only if $a_{i,j}>0$,
in which case the edge $v_i\to v_j$ is assigned weight $a_{i,j}$.

Moreover, $L$ is an $\icb$ matrix if and only if $\mbg_L$ is strongly
connected (see \cite{cv}, for a recent related work). Indeed, first
remark that a permutation of $L$ is just a re-enumeration of the
vertex set of $\mbg$ and a corresponding relabelling of the directed
arcs. Now suppose that $L$ is reducible and let $I,J$ be the partition
on $[n]$ such that $a_{i,j}=0$, for all $i\in I$ and all $j\in
J$. Then it follows that there is no directed path from a vertex $v_i$
to a vertex $v_j$, with $i\in I$ and $j\in J$. Conversely, suppose
that $\mbg$ is not strongly connected and, after a re-enumeration of
the vertices and corresponding relabelling of the directed arcs,
suppose there is no directed path from $v_1$ to $v_n$. Let
$I=\{1\}\cup\{i\in [n]\mid \exists \; v_1\to\cdots\to v_i\}$ and let
$J=[n]\setminus I$. After a re-enumeration of the vertices and
corresponding relabelling of the directed arcs, we can suppose that
$I=\{1,2,\ldots ,r\}$ and $J=\{r+1,\ldots ,n\}$. Clearly, there is not
an arc from $v_i$, $i\in I$, to $v_j$, $j\in J$ (otherwise there would
be a directed path $v_1\to\cdots\to v_i\to v_j$ and $j$ would belong
to $I$, a contradiction). Therefore $L_{I,J}=0$ and $L$ is reducible
(see, e.g., \cite[Theorem~3.2.1]{br}, for more details).

Finally, $L$ is a $\pcb$ matrix if and only if $\mbg_L$ is strongly
complete.

Observe that if $\mbg$ is undirected, then $L_{\mathbb{G}}$ is
symmetric, and the discussion above can be specialized to this case in
an obvious manner (see, e.g., \cite{ms}).

\subsection{Irreducible critical binomial matrices and their adjugate 
matrices}\label{ibm-adj} $\phantom{+}$\bigskip

\noindent Given an $n\times n$ integer matrix $L$, let $L_{i,j}$
denote the $(n-1)\times (n-1)$ matrix obtained from $L$ by eliminating
the $i$-th row and the $j$-th column of $L$. Set $\adj(L)=
((-1)^{i+j}\lvert L_{j,i}\rvert)$ to be the adjugate matrix of $L$,
sometimes also called the adjoint matrix, where $\lvert L_{j,i}\rvert$
stands for the determinant $\det(L_{j,i})$. For convenience, we write
$\epsilon_{i,j}$ to denote $(-1)^{i+j}$.

In the next set of results, invertibility, ranks or linear subspaces
are thought of over $\mbq$. Observe that if $\rank(L)=n$, then
$\adj(L)$ is invertible (since $L\cdot \adj(L)=\rvert
L\rvert\cdot\I$). If $\rank(L)=n-1$, then, by definition, at least
one $(n-1)\times (n-1)$ minor of $L$ is nonzero, thus $\adj(L)\neq 0$;
moreover, $\rank(\adj(L))=1$ because $L\cdot \adj(L)=\lvert
L\rvert\cdot \I=0$ and so all the columns of $\adj(L)$ belong to the
nullspace of $L$, which is of dimension $1$. If $\rank(L)\leq n-2$,
then all the $(n-1)\times (n-1)$ minors of $L$ are zero, so
$\adj(L)=0$. In particular, $L$ is invertible if and only if $\adj(L)$
is invertible.

The next result can be deduced from \cite[Theorem~7.6]{opv}. We
present here a more direct proof, that does not involve a wider class
of matrices. 

\begin{proposition}\label{adj(L)rowconstant}
Let $L$ be an $n\times n$ integer matrix and let $\adj(L)$ be its
adjugate matrix.
\begin{itemize}
\item[$(a)$] If $L\mbone^{\top}=0$, then all the rows of $\adj(L)$ are
  equal.
\item[$(b)$] If $L$ is a $\cb$ matrix, then, further, $\adj(L)\geq 0$.
\item[$(c)$] If $L$ is an $\icb$ matrix, then, further, $\rank(L)=n-1$
  and $\adj(L)>0$.
\end{itemize}
\end{proposition}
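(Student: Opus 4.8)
The plan is to prove the three parts in order, each building on the last. For part $(a)$, the hypothesis $L\mbone^{\top}=0$ says the columns of $L$ sum to zero, i.e. $\mbone L = 0$ is false in general but $L\mbone^\top=0$ means each \emph{row} of $L$ sums to zero; wait — more carefully, $L\mbone^\top=0$ says each row of $L$ has entries summing to zero. I would instead exploit this via the classical identity. The cleanest route: for any $i$, expanding $\det(L)$ along suitable rows, or better, use that the cofactor expansion shows that replacing any column of $L$ by $\mbone^\top$ and taking the determinant yields a combination of the entries of a fixed row of $\adj(L)$; since $L\mbone^\top = 0$, the matrix $L$ with any one column replaced by $\mbone^\top$ still has a nontrivial kernel relation. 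Actually the slick argument is: let $L'$ be obtained from $L$ by replacing column $k$ with $\mbone^\top$. Then $\det(L')$ is (up to sign) the sum of the entries of the $k$-th row of $\adj(L)$... no. The standard fact I want is that $\adj(L)\cdot L = \det(L)\I$ together with $L\mbone^\top=0$ forces each column of $\adj(L)^\top$, equivalently each row of $\adj(L)$, to be proportional to... Let me restart the mechanism: since $L\mbone^\top=0$, for each pair of columns $j,k$ of $L$ the matrices $L_{i,j}$ and $L_{i,k}$ are related by column operations (adding all other columns onto one) so that $\lvert L_{i,j}\rvert = \pm\lvert L_{i,k}\rvert$ with signs exactly compensating the $\epsilon_{i,j}$, giving that the $(i,j)$ entry of $\adj(L)$ is independent of $i$. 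I would write this out: fix $i$; in $L_{j,i}$ (rows $\neq j$, columns $\neq i$) add to one of the columns the sum of all the others; because the full row-sum of $L$ is zero, that column becomes the negative of the deleted $i$-th column entries; comparing for two values of $j$ shows $\epsilon_{i,j}\lvert L_{j,i}\rvert$ is the same for all $j$. Hence all rows of $\adj(L)$ coincide.

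For part $(b)$, assume $L$ is a $\cb$ matrix, so $L = D - B$ with $D>0$ diagonal, $B\geq 0$ with zero diagonal. By part $(a)$ it suffices to show one row of $\adj(L)$ is non-negative, equivalently that every cofactor $\epsilon_{i,j}\lvert L_{j,i}\rvert\geq 0$ for a fixed convenient $i$. Take $i=n$, say, so we look at $\lvert L_{j,n}\rvert$ for each $j$, the determinant of the $(n-1)\times(n-1)$ matrix obtained by deleting row $j$ and column $n$. The key observation is that the matrix $L$ with its last row deleted, and then with column $n$ deleted, is (after the sign $\epsilon_{n,j}$) a matrix that is itself ``$\cb$-like'': more precisely, I would invoke that $L$ restricted to index set $[n]\setminus\{n\}$ as columns is column-diagonally-dominant in the $M$-matrix sense. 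The cleanest approach is to note that any principal-like minor here equals a determinant of a matrix of the form $\tilde D - \tilde B$ where, thanks to $L\mbone^\top=0$, the deleted $n$-th column's mass has been "moved to the diagonal", so the resulting matrix is a (possibly singular) $M$-matrix, and determinants of $M$-matrices are non-negative. Concretely: $\lvert L_{j,i}\rvert$, with the appropriate sign, can be rewritten using $L\mbone^\top=0$ to replace one column, turning it into an honest diagonally dominant matrix with non-positive off-diagonal entries, whose determinant is $\geq 0$ by the standard $M$-matrix determinant bound (or by induction on $n$ expanding along a column). I would present this as a short induction.

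For part $(c)$, assume further that $L$ is $\icb$, i.e.\ irreducible. By the discussion before the proposition, since $\rank(L)\leq n-1$ always (as $L\mbone^\top=0$), it remains to rule out $\rank(L)\leq n-2$, i.e.\ to show $\adj(L)\neq 0$, and then to upgrade the non-negativity of $(b)$ to strict positivity. For $\rank(L)=n-1$: suppose not, so $\adj(L)=0$ and every $(n-1)\times(n-1)$ minor vanishes; I would derive a contradiction with irreducibility, by showing that vanishing of all such minors lets one block-triangularize $L$ (the null space has dimension $\geq 2$, and one can choose a vector in it supported on a proper subset of indices — using that $\mbone^\top$ is in the kernel and the kernel is $\geq 2$-dimensional, pick a second kernel vector $v$; after subtracting a multiple of $\mbone^\top$, $v$ has a zero coordinate; then analyze which rows/columns are forced to split, contradicting irreducibility via the $M$-matrix structure). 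Once $\rank(L)=n-1$ is known, $\adj(L)\neq 0$ and, being a non-negative matrix all of whose rows are equal and nonzero, its single repeated row $r$ is $\geq 0$ and $\neq 0$; to see $r>0$ I would use irreducibility of $L$ (equivalently of $B$, equivalently of the digraph $\mbg_L$ being strongly connected). Using $\adj(L)\cdot L = 0$, the repeated row $r$ satisfies $rL=0$; combined with the column structure $L=D-B$ this gives $rD = rB$, i.e.\ $r$ is a left Perron-type eigenvector of the non-negative irreducible matrix $BD^{-1}$ for eigenvalue... and by Perron–Frobenius such an eigenvector of an irreducible non-negative matrix is strictly positive. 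Hence $\adj(L)>0$.

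The main obstacle I anticipate is part $(b)$, specifically pinning down the precise column manipulation that converts the signed minor $\epsilon_{i,j}\lvert L_{j,i}\rvert$ into the determinant of a genuine diagonally dominant matrix with non-positive off-diagonals (the sign bookkeeping under row/column deletion is fiddly), together with cleanly invoking the non-negativity of $M$-matrix determinants — I would likely handle this by a direct induction on $n$ rather than quoting a black-box theorem. The positivity upgrade in $(c)$ is conceptually the Perron–Frobenius step and should be routine once the irreducibility of the relevant non-negative matrix is correctly identified (which the excerpt already records: $L$ irreducible $\iff$ $(D^{-1}B)^\top$ irreducible).
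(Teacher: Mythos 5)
The one genuine gap is in your argument for $\rank(L)=n-1$ in part $(c)$. You take a second kernel vector $v$, subtract a multiple of $\mbone^{\top}$ so that $v$ acquires a zero coordinate, and then want the vanishing coordinates to ``force rows/columns to split''. For a sign-indefinite kernel vector this propagation does not run: from $v_i=0$ and $(Lv)_i=0$ one learns only that $\sum_{j\neq i}a_{i,j}v_j=0$, and the terms can cancel, so nothing about the entries $a_{i,j}$ follows. The splitting argument needs a kernel vector of one sign, and your normalization does not provide that. It is repairable: subtract exactly $(\max_k v_k)\mbone^{\top}$, so that $w:=v-(\max_k v_k)\mbone^{\top}$ is nonzero, lies in $\nulls(L)$, satisfies $w\leq 0$ and vanishes somewhere; then for any $i$ with $w_i=0$ the identity $0=\sum_{j\neq i}a_{i,j}(-w_j)$ has only non-negative summands, so $a_{i,j}=0$ for every $j$ in the support of $w$, and since the zero set and the support are non-empty and partition $[n]$, this exhibits $L$ as reducible. (This is a hand-made piece of Perron--Frobenius; the paper simply quotes the theorem: $(D^{-1}B)^{\top}$ is irreducible, non-negative, of spectral radius $1$, and simplicity of the Perron root gives $\dim\nulls(L^{\top})=1$.) Your positivity step is correct as long as you state the right Perron--Frobenius fact: $\mu\geq 0$, $\mu\neq 0$, $\mu BD^{-1}=\mu$ with $BD^{-1}$ irreducible and non-negative, and a \emph{non-negative nonzero} eigenvector of an irreducible non-negative matrix is strictly positive. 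The paper reaches positivity without Perron--Frobenius, by the same splitting device as above: a zero entry of $\mu$ would give $\mu_I L_{I,J}=0$ with $\mu_I>0$ and $L_{I,J}\leq 0$, forcing $L_{I,J}=0$ and reducibility.

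For $(a)$ and $(b)$ your route differs from the paper's and is workable, but you make $(b)$ harder than necessary. The paper proves $(a)$ by noting that $L\cdot\adj(L)=0$ puts every column of $\adj(L)$ into $\nulls(L)$, which is spanned by $\mbone^{\top}$ when $\rank(L)=n-1$ (the case $\rank(L)\leq n-2$ being trivial since then $\adj(L)=0$); your column-operation argument on minors is a legitimate elementary alternative with no case split, but mind the indices: equality of the rows of $\adj(L)$ means $\epsilon_{i,j}\lvert L_{j,i}\rvert$ is independent of $i$ (the deleted column, the deleted row $j$ being fixed), not ``the same for all $j$'' as your last sentence of that paragraph states. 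For $(b)$, once $(a)$ is available the common row has $j$-th entry $\mu_j=\adj(L)_{j,j}=\lvert L_{j,j}\rvert$, so it suffices to check non-negativity of the \emph{principal} cofactors only; $L_{j,j}$ is honestly weakly diagonally dominant with non-positive off-diagonal entries, and the paper dispatches $\det(L_{j,j})\geq 0$ in a few lines with the Gershgorin Circle Theorem (real eigenvalues are $\geq 0$, complex ones occur in conjugate pairs). Your plan of attacking the non-principal minors $\lvert L_{j,n}\rvert$ and converting them by column operations is exactly the sign bookkeeping you flag as the main obstacle, and it is avoidable.
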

\begin{proof}
If $\rank(L)\leq n-2$, then $\adj(L)=0$ and all the rows are
equal. Suppose that $\rank(L)\geq n-1$. Since $L\mbone^{\top}=0$, then
$\rank(L)=n-1$ and so $\rank(\adj(L))=1$. Moreover, $L\cdot\adj(L)=0$,
so every column of $\adj(L)$ is in the $\mbq$-linear subspace
$\nulls(L)$, which is generated by $\mbone^{\top}$. Hence there exist
$\mu_1,\ldots ,\mu_n\in\mbq$ such that the columns of $\adj(L)$ are
equal to $\mu_1\mbone^{\top}, \ldots,\mu_n\mbone^{\top}$:
\begin{eqnarray}\label{adjugate}
\adj(L)=\left(\begin{array}{rrrr} \epsilon_{1,1}\lvert
L_{1,1}\rvert&\epsilon_{1,2}\lvert L_{2,1}\rvert&\ldots&
\epsilon_{1,n}\lvert L_{n,1}\rvert\\ \epsilon_{2,1}\lvert
L_{1,2}\rvert&\epsilon_{2,2}\lvert L_{2,2}\rvert&\ldots&
\epsilon_{2,n}\lvert L_{n,2}\rvert\\ 
\vdots\phantom{+}&\vdots\phantom{+}&&\vdots\phantom{+}
\\ \epsilon_{n,1}\lvert L_{1,n}\rvert&\epsilon_{n,2}\lvert 
L_{2,n}\rvert&\ldots& \epsilon_{n,n}\lvert L_{n,n}\rvert
\end{array}\right)= 
\left(\begin{array}{cccc}
\mu_1&\mu_2&\ldots&\mu_n\\
\mu_1&\mu_2&\ldots&\mu_n\\
\vdots&\vdots&&\vdots\\
\mu_1&\mu_2&\ldots&\mu_n
\end{array}\right). 
\end{eqnarray}
Thus each row of $\adj(L)$ is equal to $\mu:=(\mu_1,\ldots ,\mu_n)$,
which proves $(a)$. Note that, since $\adj(L)$ is an integer matrix,
then necessarily $\mu_1,\ldots ,\mu_n\in\mbz$.

The proof of $(b)$ is an adaptation of \cite[Lemma~2.1]{op} (see also
\cite[Theorem~6.3,~$(c)$]{opv}). Fix $i\in\{1,\ldots ,n\}$. Let us
prove that $\mu_{i}=\lvert L_{i,i}\rvert$ is non-negative.  By the
Gershgorin Circle Theorem, every eigenvalue $\lambda$ of $L_{i,i}$
lies within at least one of the discs $\{z\in\mbc\mid \lvert
z-a_{j,j}\rvert\leq R_{j}\}$, $j\neq i$, where $R_{j}=\sum_{k\neq
  i,j}\lvert -a_{j,k}\rvert$. Observe that $R_{j}\leq a_{j,j}$ due to
the fact that $L$ is a $\cb$ matrix. If $\lambda\in\mbr$, then
$\lambda\geq 0$. If $\lambda\not\in \mbr$, then since $L_{i,i}$ is a
real matrix, the conjugate $\overline{\lambda}$ must also be an
eigenvalue of $L_{i,i}$. Since $\lvert L_{i,i}\rvert$ is the product
of the $n-1$ (possibly repeated) eigenvalues of $L_{i,i}$, it follows
that $\mu_i\geq 0$ and so $\adj(L)\geq 0$.

We now turn to the proof of $(c)$. Let us first see that $\adj(L)\neq
0$. Write $L=D-B$, with $D>0$ diagonal and $D^{-1}B$ stochastic. In
particular, the spectral radius $\rho(D^{-1}B)$ of $D^{-1}B$ is $1$
(\cite[Chapter~XIII,~p.~83]{gantmacher}). Since $L$ is irreducible,
$(D^{-1}B)^{\top}$ is an irreducible, non-negative, $n\times n$
integer matrix with $\rho((D^{-1}B)^{\top})=1$. By the
Perron-Frobenius Theorem (see, e.g.,
\cite[Chapter~XIII,~p.~53]{gantmacher},
\cite[Theorem~8.8.1,~p.~178]{gr}), $\rho((D^{-1}B)^{\top})$ is
realized by a simple eigenvalue; in particular,
$\nulls((D^{-1}B)^{\top}-\I)$ has dimension $1$. Since
$\nulls((D^{-1}B)^{\top}-\I)$ coincides with $\nulls(L^{\top})$, it
follows that $L^{\top}$ and $L$ have rank $n-1$. In particular,
$\adj(L)$ has rank $1$ and $\adj(L)\neq 0$.

Let us now prove that $\adj(L)>0$. Suppose not, i.e., there exists
some $\mu_j$ which is zero. Since $\adj(L)\neq 0$, by permuting $L$,
we can suppose that $\mu_1,\ldots ,\mu_r$ are positive and
$\mu_{r+1},\ldots ,\mu_n$ are zero, where $1\leq r<n$. Setting
$I=\{1,\ldots ,r\}$ and $J=\{r+1,\ldots ,n\}$, $L$ can be written as
\begin{eqnarray*}
L=\left(\begin{array}{cc}
L_{I,I}&L_{I,J}\\L_{J,I}&L_{J,J}
\end{array}\right),\mbox{ where }
L_{I,J}=-\left(\begin{array}{ccc}
a_{1,r+1}&\ldots &a_{1,n}\\\vdots&&\vdots\\a_{r,r+1}&\ldots &a_{r,n}
\end{array}\right),
\end{eqnarray*}
and $L_{I,I}$ and $L_{J,J}$ are two square matrices. Write
$\mu=(\mu_I,\mu_J)$, with $\mu_I=(\mu_1,\ldots ,\mu_r)$ and
$\mu_J=(\mu_{r+1},\ldots ,\mu_n)=0$. Since $\adj(L)\cdot L=0$, then
$\mu L=0$. It follows that $\mu_IL_{I,J}=0$. But $\mu_1,\ldots
,\mu_r>0$ and $a_{i,j}\geq 0$, for all $i=1,\ldots ,r$ and all
$j=r+1,\ldots, n$, so $L_{I,J}=0$. Thus $L$ is reducible, a
contradiction.
\end{proof}

\begin{notation}\label{vectornu}
Given $L$ an $\icb$ matrix, we will denote the last (and so any) row of
$\adj(L)$ by $\mu=\mu(L)=(\mu_1,\ldots ,\mu_n)\in\mbn^n_{+}$.  If
$d=\gcd(\mu)$, set $\nu(L):=\mu(L)/d$.
\end{notation}

\begin{corollary} \label{ICBvsCB}
Let $L$ be an $n\times n$ integer matrix. Then $L$ is an $\icb$ matrix
if and only if $L$ is a $\cb$ matrix and any set of $n-1$ rows of $L$
is linearly independent.
\end{corollary}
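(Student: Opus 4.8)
The plan is to prove the two implications separately, leaning on Proposition~\ref{adj(L)rowconstant} for the forward direction and on the block description of reducible $\cb$ matrices in Remark~\ref{remarkreducedform} for the converse; once these are invoked, both directions reduce to elementary rank computations.

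Assume first that $L$ is an $\icb$ matrix. By definition it is a $\cb$ matrix, so only the statement about rows needs proof. Fix $j\in[n]$; I claim that the $n-1$ rows of $L$ other than the $j$-th are linearly independent, which covers every set of $n-1$ rows. By Proposition~\ref{adj(L)rowconstant}$(c)$ we have $\adj(L)>0$, and since the $(i,j)$-entry of $\adj(L)$ is $\epsilon_{i,j}\lvert L_{j,i}\rvert$, this forces $\lvert L_{j,i}\rvert\neq 0$ for every $i\in[n]$, where $L_{j,i}$ denotes $L$ with row $j$ and column $i$ deleted. Hence the $(n-1)\times n$ matrix formed by the rows of $L$ other than the $j$-th has a nonzero maximal minor, so it has rank $n-1$ and its rows are linearly independent. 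This direction is thus immediate from Proposition~\ref{adj(L)rowconstant}$(c)$.

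For the converse, assume $L$ is a $\cb$ matrix such that every set of $n-1$ rows is linearly independent, and suppose for contradiction that $L$ is reducible. First observe that the hypothesis on rows is stable under permutations $P^{\top}LP$ of $L$: a column permutation does not affect the linear (in)dependence of any collection of rows, while a row permutation merely reindexes them. By the argument of Remark~\ref{remarkreducedform} (which in particular forces $n\geq 4$, a reduction requiring both blocks to have size at least $2$), after a permutation we may write
\[
L=\left(\begin{array}{cc}L_{I,I}&0\\ L_{J,I}&L_{J,J}\end{array}\right),
\]
with $L_{I,I}$ an $r\times r$ matrix, $2\leq r\leq n-2$, $L_{I,J}=0$, and $L_{I,I}\mbone^{\top}=0$, so that $\rank(L_{I,I})\leq r-1$. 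Consequently the rows of $L_{I,I}$ satisfy a nontrivial linear relation; since $L_{I,J}=0$, the $r$ rows of $L$ indexed by $I$ are precisely those rows of $L_{I,I}$ padded with zeros, so the same relation shows that these $r$ rows of $L$ are linearly dependent. As $r\leq n-2<n-1$, enlarging this set by any further $n-1-r$ rows of $L$ yields a linearly dependent set of exactly $n-1$ rows, contradicting the hypothesis. Therefore $L$ is irreducible, and being a $\cb$ matrix it is an $\icb$ matrix. (For $n=3$ there is nothing to prove, since no $\cb$ matrix is reducible, the same size constraint being violated.)

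The only point that requires care---and the step I would check most closely---is the passage from the linear relation among the rows of the block $L_{I,I}$ to a genuine linear relation among the corresponding full rows of $L$; this is precisely where the vanishing of the off-diagonal block $L_{I,J}$ is used, and it is also why one should record the permutation-invariance of the hypothesis before invoking Remark~\ref{remarkreducedform}. Beyond this bit of bookkeeping the proof is routine.
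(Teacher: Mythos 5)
Your proof is correct and follows essentially the same route as the paper's: the forward direction rests on Proposition~\ref{adj(L)rowconstant}~$(c)$ and the converse on the reduced block form of Remark~\ref{remarkreducedform}, including the observation that $L_{I,I}\mbone^{\top}=0$ forces $\rank(L_{I,I})\leq r-1$. The only (cosmetic) difference is in how part~$(c)$ is exploited in the forward direction: you read off independence from the nonvanishing of the cofactors $\lvert L_{j,i}\rvert$, i.e.\ the maximal minors of $L$ with row $j$ deleted, whereas the paper uses $\mu L=0$ with all $\mu_i>0$ together with $\rank(L)=n-1$ to see that any $n-1$ rows still span the $(n-1)$-dimensional row space; both are immediate consequences of the same proposition.
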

\begin{proof}
Let $F=\langle l_{1,*},\ldots,l_{n,*}\rangle$ be the $\mbq$-linear
subspace spanned by the rows of a $\cb$ matrix $L$. Writing $\adj(L)$
as in \eqref{adjugate}, and since $\adj(L)\cdot L=0$, it follows that
$\mu_1l_{1,*}+\cdots +\mu_nl_{n,*}=0$, with each $\mu_i\geq 0$.
Suppose first of all that $L$ is irreducible. Fix $i\in\{1,\ldots
,n\}$. By Proposition~\ref{adj(L)rowconstant}, $\rank(L)=n-1$ and
$\mu_i>0$, for all $i=1,\ldots ,n$. Therefore, $F$ has dimension $n-1$
and $\langle l_{1,*},\ldots \widehat{l}_{i,*},\ldots,l_{n,*}\rangle$
equals $F$. Hence the set of rows $\{l_{1,*},\ldots
\widehat{l}_{i,*},\ldots,l_{n,*}\}$ is linearly
independent. Conversely, if $n=3$, the condition $\cb$ implies the
condition $\icb$. Suppose $n\geq 4$ and that $L$ is reducible. After a
permutation, one can suppose that $L$ can be written as in
\eqref{eqreducedform}, where $L_{I,I}$ is an $r\times r$ matrix,
$2\leq r\leq n-2$, such that $L_{I,I}\mbone^{\top}=0$. In particular,
$\rank(L_{I,I}^{\top})=\rank(L_{I,I})\leq r-1$. Therefore the first
$r$ rows of $L$ are linearly dependent.
\end{proof}

\begin{remark}\label{NotICB}
Clearly, an easy adaptation of the same argument shows that if $L$ is
an $\icb$ matrix, then $L$ is a $\cb$ matrix and any set of $n-1$
columns of $L$ is linearly independent. However, the converse is not
true, as the example below shows:
\begin{eqnarray*}
\newcommand\y{\cellcolor{gray!40}}
L=\left(\begin{array}{rrrr}
\y1&\y-1&0&0\\
\y-1&\y1&0&0\\
\y-1&\y-1&\y3&\y-1\\
\y-1&\y-1&\y-1&\y3
\end{array}\right),\mbox{ corresponding to }\mbg:
\vcenter{\xymatrix{ & \bullet_2 \ar@/^/[dl] & \\ \bullet_1 \ar@/^/[ur]
    & & \bullet_3 \ar[ll] \ar@/^/[dl] \ar[ul]\\ & \bullet_4 \ar[uu]
    \ar[ul] \ar@/^/[ur]}}.
\end{eqnarray*}
\end{remark}

\subsection{Homogeneity of lattice ideals of strongly connected digraphs}
\label{hom-scd}\phantom{+}\bigskip

\begin{assumption}\label{grading}
Let $L$ be an $\icb$ matrix (or, equivalently, let $\mbg_L$ be a
strongly connected digraph). Let $\nu=\nu(L)$ be defined as is
Notation~\ref{vectornu}. From now on, we will always consider the
$\mbn$-grading on $\mba=\mbk[x_1,\ldots ,x_n]$ where each $x_i$ is
given weight $\nu_i$. Thus for a monomial $x^\alpha\in \mba$, we
define the degree of $x^\alpha$ as follows:
$\deg(x^\alpha)=\nu_1\alpha_1+\cdots+\nu_n\alpha_n$.
\end{assumption}

\begin{remark}\label{ILhomogeneous}
With this grading, the matrix ideal
$I(L)=(x^{l_{*,1}^+}-x^{l_{*,1}^-},\ldots
,x^{l_{*,n}^+}-x^{l_{*,n}^-})$ and the lattice ideal
$I(\mcl)=(x^{l^+}-x^{l^-}\mid l\in\mcl)$ are homogeneous ideals of
$\mba$ (cf. Definitions in Section~\ref{general}). Indeed, let
$l\in\mcl$. So there exists $b\in \mbz^n$ with $l^{\top}=Lb^{\top}$, a
linear combination of the columns of $L$.  Then $\nu l^{\top}=\nu
Lb^{\top}=0$, so $\nu (l^{+})^{\top}=\nu (l^{-})^{\top}$ and
$\deg(x^{l^{+}})=\nu (l^{+})^{\top}=\nu
(l^-)^{\top}=\deg(x^{l^-})$. Thus $f_l=x^{l^+}-x^{l^-}$ is
homogeneous. (See \cite[Definition~2.2]{op} or
\cite[Remark~5.6]{opv}.)
\end{remark}

Note that the hypothesis that $\mbg$ be strongly connected is
essential to define a $\mbn$-grading on $\mba=\mbk[x_1,\ldots ,x_n]$
and to ensure that $I(L)$ is homogeneous. See, for instance, the
example in Remark~\ref{NotICB}.

The following result can be deduced from
\cite[Proposition~7.6]{opv}. Here we give a shorter direct proof.

\begin{proposition}\label{radical}
Let $L$ be an $\icb$ matrix. Then $\rad(I(L),x_{i})=\mfm$, for all
$i=1,\ldots ,n$.
\end{proposition}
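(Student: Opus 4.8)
The plan is to show that, modulo $x_i$, the ideal $I(L)$ already contains a power of every other variable, whence the quotient $\mba/(I(L),x_i)$ is supported only at $\mfm$. Fix $i\in\{1,\ldots,n\}$. The key observation is that the generators $f_{l_{*,j}}=x_j^{a_{j,j}}-x_1^{a_{1,j}}\cdots x_{j-1}^{a_{j-1,j}}x_{j+1}^{a_{j+1,j}}\cdots x_n^{a_{n,j}}$ of $I(L)$ have the following shape: the ``positive part'' of the $j$-th generator is a pure power $x_j^{a_{j,j}}$, and the ``negative part'' is a monomial supported exactly on those indices $k\neq j$ with $a_{k,j}>0$, i.e.\ on the in-neighbours of $v_j$ in $\mbg_L$. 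So working in $\mba/(x_i)$, I claim that for every vertex $v_k$ reachable from $v_i$ by a directed path in $\mbg_L$, some power of $x_k$ lies in $(I(L),x_i)$.

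First I would establish the base case: since $\mbg_L$ has no sinks (Assumption~\ref{digraph}), there is an arc $v_i\to v_k$ for some $k\neq i$, meaning $a_{k,i}>0$; then the $i$-th generator $f_{l_{*,i}}=x_i^{a_{i,i}}-\prod_{m\neq i}x_m^{a_{m,i}}$ reduces modulo $x_i$ to $-\prod_{m\neq i}x_m^{a_{m,i}}$, so this monomial — which involves $x_k$ with positive exponent — lies in $(I(L),x_i)$. Hence $(I(L),x_i)$ contains a monomial all of whose variables are reachable-in-one-step from $v_i$. Then I would propagate: suppose a monomial $x^\gamma\in(I(L),x_i)$ with $\gamma_j>0$ for some $j\neq i$. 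Using the generator $f_{l_{*,j}}=x_j^{a_{j,j}}-x^{(l_{*,j})^-}$, I can rewrite $x_j^{a_{j,j}}\equiv x^{(l_{*,j})^-}\pmod{I(L)}$, so multiplying $x^\gamma$ by a suitable monomial and substituting shows that a monomial supported on the in-neighbours of $v_j$ (again, those $k$ with $a_{k,j}>0$) together with the remaining variables of $x^\gamma$ lies in $(I(L),x_i)$. Iterating along directed paths out of $v_i$ — and using strong connectedness of $\mbg_L$, which holds because $L$ is $\icb$ (see Section~\ref{dictionary}) — I reach every vertex, so for each $k$ some power $x_k^{N_k}\in(I(L),x_i)$. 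Together with $x_i\in(I(L),x_i)$ this gives $\mfm\subseteq\rad(I(L),x_i)$, and the reverse inclusion is trivial, so $\rad(I(L),x_i)=\mfm$.

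A cleaner way to organize the propagation, which I would actually write up, is to argue that $V(I(L),x_i)\subseteq V(\mfm)$ as sets (over the algebraic closure $\overline{\mbk}$), then invoke the Nullstellensatz. A point $p=(p_1,\ldots,p_n)$ in $V(I(L),x_i)$ has $p_i=0$; if some $p_k\neq 0$, pick such a $k$ with a directed path from $v_i$ of minimal unweighted length $\ud(v_i,v_k)$, say $v_i\to\cdots\to v_\ell\to v_k$. Vanishing of $f_{l_{*,k}}$ at $p$ gives $p_k^{a_{k,k}}=\prod_{m}p_m^{a_{m,k}}$; the right-hand side involves $p_\ell^{a_{\ell,k}}$ with $a_{\ell,k}>0$, and by minimality of the path length all in-neighbours of $v_k$ that are reachable from $v_i$ are closer, forcing... actually the subtlety is that in-neighbours of $v_k$ need not be reachable from $v_i$, so I would instead induct on distance in the reverse direction or simply run the monomial argument of the previous paragraph, which is self-contained. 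The main obstacle, and the only place care is needed, is exactly this bookkeeping: ensuring that the directed-path propagation terminates and covers all $n$ vertices. Strong connectedness handles coverage; termination is automatic since at each step we are adjoining pure powers to an ideal in a Noetherian ring, and there are only finitely many variables. No deep input beyond Assumption~\ref{digraph}, the dictionary between $\icb$ matrices and strongly connected digraphs, and the explicit binomial form of the generators is required.
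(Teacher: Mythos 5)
There is a genuine gap, and it sits exactly where your argument must produce pure powers of individual variables. Two intertwined problems. First, the dictionary is reversed: an arc $v_i\to v_k$ means $a_{i,k}>0$, not $a_{k,i}>0$ (and an in-neighbour of $v_i$ exists because $\mbg$ has no sources, not because it has no sinks). The negative part of the $j$-th column generator $f_{l_{*,j}}=x_j^{a_{j,j}}-\prod_{k\neq j}x_k^{a_{k,j}}$ is supported on the in-neighbours of $v_j$; consequently, knowing $x_i$ lies in the ideal lets you simplify the generators of the \emph{out}-neighbours $v_j$ of $v_i$ (their negative parts are divisible by $x_i$, leaving $x_j^{a_{j,j}}$), whereas reducing the $i$-th generator itself, as in your base case, only shows that the product $\prod_{m\neq i}x_m^{a_{m,i}}$ over the in-neighbours of $v_i$ lies in $(I(L),x_i)$. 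Second, and more seriously, your propagation step only ever manufactures such products: substituting $x_j^{a_{j,j}}\equiv x^{(l_{*,j})^-}$ into a monomial $x^\gamma$ yields another monomial whose support is spread over several vertices, and membership of a product of distinct variables in $(I(L),x_i)$ does not give any single $x_k^{N_k}\in(I(L),x_i)$ — it only says that each minimal prime over $(I(L),x_i)$ contains \emph{some} variable of the support. So the concluding claim ``for each $k$ some power $x_k^{N_k}\in(I(L),x_i)$'' is never justified, and the Noetherian/termination remark does not address this.

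The repair is to propagate forward along arcs, using at each step the generator of the \emph{head} of the arc; this is precisely the paper's proof. If $x_j\in\rad(I(L),x_i)$ and $v_j\to v_l$ is an arc, then $a_{j,l}>0$, so the negative part of $f_{l_{*,l}}$ is divisible by $x_j$ and $x_l^{a_{l,l}}\in(I(L),x_i,x_j)$; since $x_j\in\rad(I(L),x_i)$ we have $\rad(I(L),x_i,x_j)=\rad(I(L),x_i)$, hence $x_l\in\rad(I(L),x_i)$. Following a directed path from $v_i$ through all vertices (which exists by strong connectedness) gives $\mfm\subseteq\rad(I(L),x_i)$. Ironically, the Nullstellensatz variant you abandoned is correct and is the same argument in disguise: if $p\in V(I(L),x_i)$ and $p_k\neq 0$ with $\ud(v_i,v_k)$ minimal among the nonvanishing coordinates, then the predecessor $v_\ell$ on a shortest path from $v_i$ to $v_k$ satisfies $p_\ell=0$ and $a_{\ell,k}>0$, so $f_{l_{*,k}}(p)=0$ forces $p_k^{a_{k,k}}=0$, a contradiction. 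Your worry that in-neighbours of $v_k$ might not be reachable from $v_i$ is vacuous — in a strongly connected digraph every vertex is reachable from $v_i$ — and in any case only the predecessor on the chosen path is needed.
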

\begin{proof}
Fix $i\in\{1,\ldots ,n\}$. Let $\mbg_L$ be the corresponding strongly
connected digraph. Let $v_{i_1}\to\cdots\to v_{i_N}$ be a directed
path passing through all the vertices and begining in $v_{i_1}=v_i$
(there might be possibly repeated vertices, though adjacent vertices
will not be repeated). To simplify notations, write $f_i$ to denote
the binomial defined by the $i$-th column of $L$. Thus
$I(L)=(f_{i_N},\ldots ,f_{i_1})$. Write
$f_{i_1}=x_{i_1}^{a_{i_1,i_1}}-g_1$, where $g_1\in \mba$ is a monomial
divisible by at least one variable $x_j$ different from $x_{i_1}$.
Since there is a directed arc from the vertex $v_{i_1}$ to the vertex
$v_{i_2}$, then $a_{i_1,i_2}\neq 0$ and
$f_{i_2}=x_{i_2}^{a_{i_2,i_2}}-x_{i_1}^{a_{i_1,i_2}}g_2$, where
$g_2\in \mba$ denotes a monomial, possibly constant. Then
$\rad(f_{i_2},x_{i_1})=(x_{i_2},x_{i_1})$.  Similarly,
$\rad(f_{i_3},x_{i_2})=(x_{i_3},x_{i_2})$ and so on. Therefore
\begin{eqnarray*}
&&\rad(I(L),x_{i})=\rad(f_{i_N},\ldots
  ,f_{i_2},f_{i_1},x_{i_1})=\rad(f_{i_N},\ldots
  ,f_{i_2},x_{i_1},g_1)=\\ &&\rad(f_{i_N},\ldots
  ,f_{i_3},x_{i_2},x_{i_1},g_1)=\ldots =\rad(x_{i_N},\ldots
  ,x_{i_1},g_1)=\rad(\mfm,g_1)=\mfm.
\end{eqnarray*}
\end{proof}

We write $\hull(I)$ for the intersection of the isolated primary
components of $I$ (see, e.g., \cite{op} or \cite{opv}).

\begin{corollary}\label{projdim}
Let $L$ be an $\icb$ matrix. Then $I(L)$ and $I(\mcl)$ have height
$n-1$. Moreover, $I(\mcl)=\hull(I(L))$ is the intersection of the
isolated primary components of $I(L)$. In particular, $I(\mcl)$ is a
Cohen-Macaulay ideal of dimension $1$ and projective dimension $n-1$.
\end{corollary}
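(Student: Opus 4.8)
\textbf{Proof plan for Corollary~\ref{projdim}.}
The plan is to combine Proposition~\ref{radical} with the fact, established in Corollary~\ref{ICBvsCB}, that $\rank(L)=n-1$, and to exploit the homogeneity of $I(L)$ and $I(\mcl)$ recorded in Remark~\ref{ILhomogeneous}. First I would fix the height of $I(L)$. On the one hand, $L$ has rank $n-1$, so the lattice $\mcl$ has rank $n-1$, and for a lattice ideal it is standard that $\height I(\mcl)=\rank\mcl=n-1$ (equivalently, $\dim \mba/I(\mcl)=1$); this also follows here directly from Proposition~\ref{radical}, since $\rad(I(\mcl),x_i)\supseteq\rad(I(L),x_i)=\mfm$ forces every minimal prime of $I(\mcl)$ to have height $\geq n-1$, while $\height I(\mcl)\leq n-1$ because $I(\mcl)$ is generated by the binomials coming from a rank-$(n-1)$ lattice (or because $I(\mcl)$ cannot contain a monomial, being a lattice ideal of a positive lattice, so $x_1\cdots x_n$ is a nonzerodivisor mod $I(\mcl)$). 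For $I(L)$, the inclusion $I(L)\subseteq I(\mcl)$ gives $\height I(L)\leq n-1$, and Proposition~\ref{radical} gives $\rad(I(L),x_i)=\mfm$ for every $i$, so every minimal prime $\mfp$ of $I(L)$ satisfies: for each $i$, either $x_i\in\mfp$ or $\mfm=\rad(I(L)+(x_i))\subseteq\rad(\mfp+(x_i))$, hence in the latter case $\mfp+(x_i)$ is $\mfm$-primary and $\height\mfp\geq n-1$; a short case analysis (at most one $x_i$ can be avoided, by homogeneity) shows $\height\mfp\geq n-1$ in all cases, so $\height I(L)=n-1$ as well.

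Next I would identify $I(\mcl)$ with $\hull(I(L))$. Since $\height I(L)=\height I(\mcl)=n-1$ and $I(L)\subseteq I(\mcl)$, the isolated primary components of $I(L)$ are exactly the primary components at the height-$(n-1)$ primes, and $\hull(I(L))$ is their intersection. One inclusion, $I(L)\subseteq\hull(I(L))$, is automatic; for the reverse containment of $I(\mcl)$ in $\hull(I(L))$ and vice versa I would argue that $I(\mcl)$ is itself unmixed of height $n-1$ (its minimal primes are the height-$(n-1)$ primes over $I(L)$, and a lattice ideal of a positive lattice has no embedded primes — it is even Cohen--Macaulay, see below), so $I(\mcl)=\bigcap_{\mfp}(I(\mcl))_\mfp\cap\mba$ over its minimal primes $\mfp$; these are precisely the isolated primes of $I(L)$, and localizing the containment $I(L)\subseteq I(\mcl)$ at each such $\mfp$ we need equality $I(L)_\mfp=I(\mcl)_\mfp$. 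This last equality is the technical heart: it holds because, after localizing at a minimal prime $\mfp$ of $\mcl$, some variable becomes invertible, and inverting variables turns $I(L)$ into the full lattice ideal — concretely, $I(L):(x_1\cdots x_n)^\infty=I(\mcl)$, a standard fact proved by saturating the binomial relations (see \cite{op}, \cite{opv}, where this type of statement is worked out for $\pcb$/$\cb$ ideals). So $\hull(I(L))=I(L):(x_1\cdots x_n)^\infty=I(\mcl)$.

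Finally I would deduce the Cohen--Macaulayness. Since $I(\mcl)$ is a lattice ideal of the positive lattice $\mcl$ (positive because $\mcl=\ker$ of a positive linear form $\nu$, by Remark~\ref{ILhomogeneous}, so $\mcl\cap\mbn^n=\{0\}$), the element $x_1\cdots x_n$ is a nonzerodivisor on $\mba/I(\mcl)$, and inverting it gives a Laurent-polynomial ring modulo a lattice ideal, which is a (localization of a) Cohen--Macaulay — indeed regular, being $\mbk[\mbz^{n-1}]$ after a change of coordinates — ring; thus $\mba/I(\mcl)$ has depth $\geq 1$. Combined with $\dim\mba/I(\mcl)=1$ this forces $\depth=\dim=1$, i.e., $\mba/I(\mcl)$ is Cohen--Macaulay of dimension $1$. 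By the Auslander--Buchsbaum formula, $\pd_{\mba}(\mba/I(\mcl))=n-\depth(\mba/I(\mcl))=n-1$, as claimed. The main obstacle I anticipate is the saturation identity $\hull(I(L))=I(L):(x_1\cdots x_n)^\infty=I(\mcl)$: one must check that saturating the critical binomials actually produces \emph{all} lattice binomials and nothing of higher height, which is where the $\icb$ hypothesis (via $\rank L=n-1$ and Proposition~\ref{radical}) is really used, and where I would lean on the cited earlier work \cite{op,opv}.
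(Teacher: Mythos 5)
Your proposal is correct and follows essentially the same route as the paper: the height computation is Proposition~\ref{radical} plus homogeneity (exactly what the paper delegates to \cite[Lemma~5.1]{opv}), the identification $I(\mcl)=\hull(I(L))$ rests on the saturation statement $I(\mcl)=I(L):x_n^{m}$, $m\gg 0$, of \cite[Proposition~5.7]{opv} --- precisely the citation the paper uses --- and your final step (dimension-one Cohen--Macaulayness, then $\pd=n-1$ via Auslander--Buchsbaum) is the paper's unmixed-plus-perfect argument in an equivalent form. Two asides are inexact but harmless: the parenthetical ``at most one $x_i$ can be avoided'' is unnecessary (one avoided variable suffices for the Krull argument), and after inverting $x_1\cdots x_n$ one gets $\mbk[\mbz^n/\mcl]$, which is Cohen--Macaulay but need not be regular when $\charac\,\mbk$ divides the torsion of $\mbz^n/\mcl$; neither matters, since the nonzerodivisor alone already gives $\depth\geq 1$.
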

\begin{proof}
That $I(L)$ has height $n-1$ follows from Remark~\ref{ILhomogeneous},
Proposition~\ref{radical} and \cite[Lemma~5.1]{opv}. That $I(\mcl)$
has height $n-1$ and is the Hull of $I(L)$ follows from
\cite[Proposition~5.7]{opv}. In particular, $I(\mcl)$ is (homogeneous)
unmixed and $\mba/I(\mcl)$ is a (graded) Cohen-Macaulay ring of dimension
$1$. By \cite[Corollary~2.2.15]{bh}, $I(\mcl)$ is perfect and so the
projective dimension of $I(\mcl)$ is $n-1$.
\end{proof}

\section{The Cyc complex associated to a directed graph}

\subsection{The set of cyclically ordered partitions and an 
enumeration}\label{thesetcyc}
$\phantom{+}$\bigskip

\noindent Let us now recall some material from \cite[Section~2]{ms},
treating it however in a more general context that the one considered
there. Set $[n]=\{1,\ldots ,n\}$, take $1\leq k\leq n$, and let
$\cyc_{n,k}$ denote the set of cyclically ordered partitions of the
set $[n]$ into $k$ blocks, which we shall always take to be
non-empty. Each element of $\cyc_{n,k}$ has the form $(I_1,\ldots
,I_k)$, where $I_1\cup\ldots\cup I_k=[n]$ is a partition, and we
regard the $(I_1,\ldots ,I_k)$ as formal symbols subject to the
identifications
\begin{eqnarray*}
(I_1,I_2,\ldots ,I_k)=(I_2,I_3,\ldots
  ,I_k,I_1)=\ldots=(I_k,I_1,\ldots,I_{k-2},I_{k-1}).
\end{eqnarray*}
Each such symbol clearly specifies an equivalence class, and we pick a
unique representative of the corresponding equivalence class by
assuming, after relabelling of suffices, that $n\in I_k$.

The cardinality of the set $\cyc_{n,k}$ is $\lvert
\cyc_{n,k}\rvert=(k-1)!\cdot S_{n,k}$, where $S_{n,k}$ is the Stirling
number of the second kind. Note that $\lvert \cyc_{n,1}\rvert=1$ and
$\lvert \cyc_{n,n}\rvert=(n-1)!$. When the value of $n$ is understood
from the context, it will be convenient to denote
$\lvert\cyc_{n,k}\rvert$ by $r_{k-1}$ (see
Definition~\ref{cyccomplex}).

We take the opportunity to introduce a convenient enumeration of
$\cyc_{n,k}$, which will be used subsequently in the following
sections.

\begin{remark}\label{srle}
Given a subset $I$ of $[n]$ of cardinality $\lvert I\rvert$, let
$\chi_I$ denote the incidence vector of $I$, so that
$\chi_I^{\top}=(\chi_{I}(1),\ldots ,\chi_{I}(n))\in \{0,1\}^n\subset
\mbz^n$, with $\chi_I(i)=1$, if $i\in I$, and $\chi_I(i)=0$,
otherwise.  If $J$ and $I\subseteq [n]$ are two different subsets of
$[n]$, we will use the notation $J\prec I$ and say that ``$J$ precedes
$I$'' (or alternatively, ``$I$ succeeds $J$''), whenever $\lvert
J\rvert >\lvert I\rvert$, or whenever $\lvert J\rvert=\lvert I\rvert$
and the difference $\chi_J^{\top}-\chi_I^{\top}$ is $(*,\ldots
,*,1,0\ldots ,0)$; that is, the rightmost non-zero coefficient of
$\chi_J^{\top}-\chi_I^{\top}$ is $1$.  Given $(J_1,\ldots ,J_k)$ and
$(I_1,\ldots ,I_k)$, two different cyclically ordered partitions of
$[n]$ into $k$ blocks, with $n\in J_k,I_k$, we will use the notation
$(J_1,\ldots ,J_k)\prec (I_1,\ldots ,I_k)$ and say that ``$(J_1,\ldots
,J_k)$ precedes $(I_1,\ldots ,I_k)$'' (or alternatively,
``$(I_1,\ldots ,I_k)$ succeeds $(J_1,\ldots ,J_k)$''), whenever, for
some $s\in\{1,\ldots ,k-1\}$, $J_1=I_1,\ldots ,J_{s-1}=I_{s-1}$ and
$J_{s}$ precedes $I_s$. This is a ``size-reverse lexicographic
enumeration'', so we will refer to it as an $\srle$ (for a discussion
on this subject see, e.g., \cite{lhs}). Any reference made below
without qualification to an enumeration will refer to the enumeration
employing the $\srle$. (In this regard, note the word of caution
given in Remark~\ref{nonsrle}.)
\end{remark}

\begin{example}\label{srlen=4}
Set $n=4$. For the sake of brevity, write $(123,4)$ for
$(\{1,2,3\},\{4\})$.  Understanding that, in the display, elements on
the left precede elements on the right and employing the $\srle$,
$\cyc_{4,k}$ is enumerated as follows:
\begin{eqnarray*}
\cyc_{4,2}&=&\{(123,4),(23,14),(13,24),(12,34),(3,124),(2,134),(1,234)\};\\ 
\cyc_{4,3}&=&\{(23,1,4),(13,2,4),(12,3,4),(3,12,4),(3,2,14),(3,1,24),\\ 
&\mbox{ }&(2,13,4),(2,3,14),(2,1,34),(1,23,4),(1,3,24),(1,2,34)\};\\ 
\cyc_{4,4}&=&\{(3,2,1,4),(3,1,2,4),(2,3,1,4),(2,1,3,4),(1,3,2,4),
(1,2,3,4)\}.
\end{eqnarray*}
\end{example}

\subsection{The Cyc sequence associated to a digraph or to a CB matrix}
$\phantom{+}$\bigskip

\noindent Let $\mbg$ be a digraph of $n$ vertices or, equivalently,
consider an $n\times n$ $\cb$ matrix $L$ (recall
Assumption~\ref{digraph}). For disjoint non-empty subsets $I$ and $J$
of $[n]$, we define
\begin{eqnarray*}
x^{I\to J}=\prod_{i\in I}x_{i}^{\sum_{j\in J}a_{i,j}}.
\end{eqnarray*}
If $I$ or $J$ is the empty set, we put $x^{I\to J}$ equal to $1$.

\begin{definition}\label{cyccomplex}
Let $\mcc_{\mathbb{G}}$ (also denoted by $\mcc_L$, $\cyc$ or just
$\mcc$) be the sequence of homomorphisms of $\mba$-modules defined as
follows:
\begin{eqnarray*}
\mcc &:& 0\leftarrow \mcc_0\xleftarrow{\sdif_1} \mcc_1
\xleftarrow{\sdif_2} \cdots \xleftarrow{\sdif_{n-2}} \mcc_{n-2}
\xleftarrow{\sdif_{n-1}}\mcc_{n-1}\xleftarrow{} 0,
\end{eqnarray*}
where $\mcc_k=\mbk[x]^{{\rm Cyc}_{n,k+1}}$ is the free
$\mbk[x]$-module of rank $r_k:=\lvert\cyc_{n,k+1}\rvert$ and with
basis $\mcb_{k}$, the set of elements of $\cyc_{n,k+1}$. The elements
of $\mcb_k$ are enumerated employing the $\srle$. With this
enumeration, they will be denoted by $\mcb_{k}=\{e_{k,1},\ldots
,e_{k,r_k}\}$. Their images under $\dif_k$ will be denoted by
$\dif_{k}(\mcb_{k})=\{f_{k-1,1},\ldots ,f_{k-1,r_k}\}$, with
$f_{k-1,j}:=\dif_k(e_{k,j})$. The module $\mcc_0$ is $\mbk[x]^{{\rm
    Cyc}_{n,1}}=\mbk[x]$. Here the trivial $1$-block $([n])$ will be
identified with the unit element of $\mbk[x](=\mba)$.  The boundary
map $\dif_{k}:\mcc_k\to\mcc_{k-1}$, in slightly simplified notation
that we use throughout, is given by the formula
\begin{eqnarray}\label{differential}
\dif_{k}(I_1,\ldots,I_{k+1}) & = & \sum_{s=1}^{k}(-1)^{s-1}x^{I_s\to
  I_{s+1}}(I_1,\ldots ,I_s\cup I_{s+1},\ldots ,I_{k+1}) \\ &&
-x^{I_{k+1}\to I_1}(I_2,\ldots,I_{k},I_1\cup I_{k+1}). \notag
\end{eqnarray}
\end{definition}

\begin{remark}\label{dif-and-enum}
Observe that the $k$ terms of the first addition in
\eqref{differential} are enumerated according to the $\srle$; this is
not the case for the last term, which depends on the relationship
between $I_1$ and $I_2$. However, provided that, for all $i=1,\ldots
,n-1$, $a_{n,i}> 0$, the last term is distinguished from the others
because it is the only one whose coefficient contains the variable
$x_n$. This follows from the fact that $n\in I_{k+1}$.
\end{remark}

\subsection{The degree zero component of Cyc}
$\phantom{+}$\bigskip

\noindent Let $\mbg$ be a digraph of $n$ vertices or, equivalently,
consider an $n\times n$ $\cb$ matrix $L$. Let $I(L)$ be the $\cb$
ideal associated to $L$ and let $I(\mcl)$ be the lattice ideal
associated to the lattice $\mcl$ spanned by the columns of $L$ (see
Section~\ref{general}).

\begin{remark}\label{Iind1C}
Let $(I,\overline{I})\in \mcc_1$, with $I\subseteq [n-1]$,
$I\neq\varnothing$ and $\overline{I}=[n]\setminus I$. Let $\chi_I$ be
the incidence vector of $I$. Then
\begin{eqnarray}\label{d1(I,J)}
\phantom{++} x^{I\to\overline{I}}=x^{(L\chi_I)^+}\mbox{, }\;\; x^{\overline{I}\to
  I}=x^{(L\chi_I)^-}\mbox{ and }\;\;\dif_1(I,\overline{I})=
x^{(L\chi_I)^+}-x^{(L\chi_I)^-}.
\end{eqnarray}
If follows that $I(L)\subseteq \dif_1(\mcc_1)\subseteq I(\mcl)$ is a
chain of inclusions of three ideals of $\mbk[x]=\mba$. Furthermore, if
$L$ is an $\icb$ matrix, then $I(L)$, $\dif_1(\mcc_1)$ and $I(\mcl)$ are
homogeneous ideals.
\end{remark}
\begin{proof}
If $i\in I$, the $i$-th coordinate of $L\chi_I=\sum_{j\in I}l_{*,j}$
is $(L\chi_I)_i=a_{i,i}-\sum_{j\in
  I\setminus\{i\}}a_{i,j}=\sum_{j\in\overline{I}}a_{i,j}$, which is
positive. Therefore, in this case, $(L\chi_I)_i=((L\chi_I)^+)_i$.  If
$i\not\in I$, the $i$-th coordinate of $L\chi_I$ is
$(L\chi_I)_i=-\sum_{j\in I}a_{i,j}$, which is non-positive. Therefore,
in this case, $((L\chi_I)^+)_i=0$. Hence
\begin{eqnarray*}
x^{(L\chi_I)^{+}}=\prod_{i\in I}x_i^{\sum_{j\in\overline{I}}a_{i,j}},
\end{eqnarray*}
which coincides with the
definition of $x^{I\to\overline{I}}$. Clearly
$\chi_I+\chi_{\overline{I}}=\mbone^{\top}$. Therefore,
$L\chi_{\overline{I}}=-L\chi_I$ and
$(L\chi_{\overline{I}})^+=(-L\chi_I)^+=(L\chi_I)^-$. Thus
$x^{\overline{I}\to I}=x^{(L\chi_{\overline{I}})^+}=x^{(L\chi_I)^-}$.

We know that $\dif_1(I,\overline{I})=x^{I\to \overline{I}}(I\cup
\overline{I})-x^{\overline{I}\to I}(I\cup \overline{I})$. As we
mentioned before, we make the identification $(I\cup
\overline{I})\equiv ([n])$ with the unit element in $\mbk[x]=\mcc_0$. So
\begin{eqnarray*}
\dif_1(I,\overline{I})=x^{I\to \overline{I}}-x^{\overline{I}\to
  I}=x^{(L\chi_I)^+}-x^{(L\chi_I)^-}, 
\end{eqnarray*}
which proves \eqref{d1(I,J)}. Since $L\chi_I\in\mcl$, then
$x^{(L\chi_I)^+}-x^{(L\chi_I)^-}$ is in $I(\mcl)$. This proves
$\dif_1(\mcc_1)\subseteq I(\mcl)$.

On the other hand, for $i=1,\ldots, n-1$, the $i$-th column $l_{*,i}$
of $L$ can be expressed as $l_{*,i}=L\chi_{\{i\}}$.  So, by
\eqref{d1(I,J)},
$x^{l_{*,i}^+}-x^{l_{*,i}^-}=\dif_1(\{i\},\overline{\{i\}})\in
\dif_1(\mcc_1)$.  Recalling that our partitions $(I,\overline{I})$ of
$[n]$ have the property $n\in \overline{I}$, then $l_{*,n}$ can be
expressed as $l_{*,n}=-(l_{*,1}+\cdots +l_{*,n-1})=-L\chi_{\{1,\ldots
  ,n-1\}}$. Taking into account that, when $\beta=-\alpha\in\mbz^n$,
then $x^{\beta^+}-x^{\beta^-}=-(x^{\alpha^+}-x^{\alpha^-})$, and using
\eqref{d1(I,J)} again,
\begin{eqnarray*}
x^{l_{*,n}^+}-x^{l_{*,n}^-} & = &
  -(x^{(L\chi_{\{1,\ldots,n-1\}})^+}-x^{(L\chi_{\{1,\ldots
      ,n-1\}})^-})\\ & = & -\dif_1(\{1,\ldots ,n-1\},\{n\})\in
  \dif_1(\mcc_1).
\end{eqnarray*}
Therefore, $I(L)=(x^{l_{*,1}^+}-x^{l_{*,1}^+},\ldots
,x^{l_{*,n}^+}-x^{l_{*,n}^+})\subseteq \dif_1(\mcc_1)$.

Now, suppose that $L$ is an $\icb$ matrix. Since $L\chi_I\in\mcl$, it
follows, as in the proof of Remark~\ref{ILhomogeneous}, that
$x^{(L\chi_I)^+}-x^{(L\chi_I)^-}$ is homogeneous (in the grading
considered in Assumption~\ref{grading}). This proves that
$\dif_1(\mcc_1)$ is homogeneous and $\dif_1(\mcc_1)\subseteq
I(\mcl)$. By Remark~\ref{ILhomogeneous}, we already know that $I(L)$
and $I(\mcl)$ are homogeneous.
\end{proof}

\begin{notation}\label{fC}
For ease of notation, if $(C,\overline{C})\in \mcc_1$ (with
$C\subseteq [n-1]$, $C\neq\varnothing$ and $\overline{C}=[n]\setminus
C$) is a partition of $[n]$, we set $m_C:=x^{C\to\overline{C}}$,
$m_{\overline C}:=x^{\overline{C}\to C}$, and
$f_C:=m_C-m_{\overline{C}}=\dif_1(C,\overline{C})$.
\end{notation}

\begin{definition}\label{dif0}
 We define $\dif_0:\mcc_0=\mbk[x]\to \mbk[x]/I(\mcl)$ to be the
 natural projection onto the quotient ring, so that $\dif_0\circ
 \dif_1=0$.
\end{definition}

\subsection{The Cyc sequence in four variables}
$\phantom{+}$\bigskip

\noindent Let us display the sequence of homomorphims $\mcc$ of
Definition~\ref{cyccomplex} for a digraph $\mbg$ of four vertices or,
equivalently, for a $4\times 4$ $\cb$ matrix $L$.

\begin{example}\label{cyc4}
Let $n=4$. To simplify notations, write $x,y,z,t$ instead of
$x_1,x_2,x_3,x_4$, respectively. As in Example~\ref{srlen=4}, write
\begin{eqnarray*}
(i_1i_2\ldots i_r,j_1j_2\ldots j_s)\mbox{ instead of }(\{i_1,i_2,\ldots
,i_r\},\{j_1,j_2,\ldots ,j_s\}).
\end{eqnarray*}
The resulting complex $\mcc$ has the following form.
\begin{eqnarray*}
\mcc&:&0\leftarrow
\mcc_0=\mbk[x]\xleftarrow{\sdif_{1}}\mcc_{1}=\mbk[x]^{7}
\xleftarrow{\sdif_{2}}\mcc_{2}=\mbk[x]^{12}\xleftarrow{\sdif_{3}}
\mcc_{3}=\mbk[x]^{6}\xleftarrow{} 0.
\end{eqnarray*}
The differentials go as follows. Recall that the elements of the basis
$\mcb_k$ are enumerated employing the $\srle$ (see
Definition~\ref{cyccomplex} and Example~\ref{srlen=4}; the underlined
terms will be the leading ones in an ordering specified subsequently
in Assumption~\ref{wrlo}). For $\dif_1$:
\begin{eqnarray*}
\begin{array}{l}
f_{0,1}=\dif_1(e_{1,1})=\dif_1(123,4)=
\underline{x^{a_{1,4}}y^{a_{2,4}}z^{a_{3,4}}}-t^{a_{4,4}},\\
f_{0,2}=\dif_1(e_{1,2})=\dif_1(23,14)=
\underline{y^{a_{2,1}+a_{2,4}}z^{a_{3,1}+a_{3,4}}}-
x^{a_{1,2}+a_{1,3}}t^{a_{4,2}+a_{4,3}},\\
f_{0,3}=\dif_1(e_{1,3})=\dif_1(13,24)=
\underline{x^{a_{1,2}+a_{1,4}}z^{a_{3,2}+a_{3,4}}}-
y^{a_{2,1}+a_{2,3}}t^{a_{4,1}+a_{4,3}},\\
f_{0,4}=\dif_1(e_{1,4})=\dif_1(12,34)=
\underline{x^{a_{1,3}+a_{1,4}}y^{a_{2,3}+a_{2,4}}}-
z^{a_{3,1}+a_{3,2}}t^{a_{4,1}+a_{4,2}},\\
f_{0,5}=\dif_1(e_{1,5})=\dif_1(3,124)=
\underline{z^{a_{3,3}}}-x^{a_{1,3}}y^{a_{2,3}}t^{a_{4,3}},\\
f_{0,6}=\dif_1(e_{1,6})=\dif_1(2,134)=
\underline{y^{a_{2,2}}}-x^{a_{1,2}}z^{a_{3,2}}t^{a_{4,2}},\\
f_{0,7}=\dif_1(e_{1,7})=\dif_1(1,234)=
\underline{x^{a_{1,1}}}-y^{a_{2,1}}z^{a_{3,1}}t^{a_{4,1}}.
\end{array}
\end{eqnarray*}
For $\dif_2$ we have:
\begin{eqnarray*}
\begin{array}{lcl}
f_{1,1} & = & \dif_2(e_{2,1})=\dif_2(23,1,4) 
\\ & = & y^{a_{2,1}}z^{a_{3,1}}(123,4)-\underline{x^{a_{1,4}}(23,14)}-
t^{a_{4,2}+a_{4,3}}(1,234),\\
f_{1,2} & = & \dif_2(e_{2,2})=\dif_2(13,2,4)
\\ & = & x^{a_{1,2}}z^{a_{3,2}}(123,4)-\underline{y^{a_{2,4}}(13,24)}-
t^{a_{4,1}+a_{4,3}}(2,134),\\
f_{1,3} & = & \dif_2(e_{2,3})=\dif_2(12,3,4) 
\\ & = &  x^{a_{1,3}}y^{a_{2,3}}(123,4)-\underline{z^{a_{3,4}}(12,34)}-
t^{a_{4,1}+a_{4,2}}(3,124),\\
f_{1,4} & = & \dif_2(e_{2,4})=\dif_2(3,12,4) 
\\ & = & z^{a_{3,1}+a_{3,2}}(123,4)-\underline{x^{a_{1,4}}y^{a_{2,4}}(3,124)}-
t^{a_{4,3}}(12,34),\\
f_{1,5} & = & \dif_2(e_{2,5})=\dif_2(3,2,14) 
\\ & = & z^{a_{3,2}}(23,14)-\underline{y^{a_{2,1}+a_{2,4}}(3,124)}-
x^{a_{1,3}}t^{a_{4,3}}(2,134),\\
f_{1,6} & = & \dif_2(e_{2,6})=\dif_2(3,1,24) 
\\ & = &  z^{a_{3,1}}(13,24)-\underline{x^{a_{1,2}+a_{1,4}}(3,124)}-
y^{a_{2,3}}t^{a_{4,3}}(1,234),\\
f_{1,7} & = & \dif_2(e_{2,7})=\dif_2(2,13,4)
\\ & = & y^{a_{2,1}+a_{2,3}}(123,4)-\underline{x^{a_{1,4}}z^{a_{3,4}}(2,134)}-
t^{a_{4,2}}(13,24),\\
f_{1,8} & = & \dif_2(e_{2,8})=\dif_2(2,3,14)
\\ & = & y^{a_{2,3}}(23,14)-\underline{z^{a_{3,1}+a_{3,4}}(2,134)}-
x^{a_{1,2}}t^{a_{4,2}}(3,124),\\
f_{1,9} & = & \dif_2(e_{2,9})=\dif_2(2,1,34)
\\ & = & y^{a_{2,1}}(12,34)-\underline{x^{a_{1,3}+a_{1,4}}(2,134)}-
z^{a_{3,2}}t^{a_{4,2}}(1,234),\\
f_{1,10} & = & \dif_2(e_{2,10})=\dif_2(1,23,4)
\\ & = & x^{a_{1,2}+a_{1,3}}(123,4)-\underline{y^{a_{2,4}}z^{a_{3,4}}(1,234)}-
t^{a_{4,1}}(23,14),\\
f_{1,11} & = & \dif_2(e_{2,11})=\dif_2(1,3,24)
\\ & = &  x^{a_{1,3}}(13,24)-\underline{z^{a_{3,2}+a_{3,4}}(1,234)}-
y^{a_{2,1}}t^{a_{4,1}}(3,124),\\
f_{1,12} & = & \dif_2(e_{2,12})=\dif_2(1,2,34)
\\ & = & x^{a_{1,2}}(12,34)-\underline{y^{a_{2,3}+a_{2,4}}(1,234)}-
z^{a_{3,1}}t^{a_{4,1}}(2,134).
\end{array}
\end{eqnarray*}
Finally, for $\dif_3$:
\begin{eqnarray*}
\begin{array}{lcl}
f_{2,1} & = & \dif_3(e_{3,1})=
\dif_3(3,2,1,4)\\ & = & z^{a_{3,2}}(23,1,4)-y^{a_{2,1}}(3,12,4)+
\underline{x^{a_{1,4}}(3,2,14)}-t^{a_{4,3}}(2,1,34),\\
f_{2,2} & = & \dif_3(e_{3,2})=
\dif_3(3,1,2,4)\\ & = & z^{a_{3,1}}(13,2,4)-x^{a_{1,2}}(3,12,4)+
\underline{y^{a_{2,4}}(3,1,24)}-t^{a_{4,3}}(1,2,34),\\
f_{2,3} & = &  \dif_3(e_{3,3})=
\dif_3(2,3,1,4)\\ & = & y^{a_{2,3}}(23,1,4)-z^{a_{3,1}}(2,13,4)+
\underline{x^{a_{1,4}}(2,3,14)}-t^{a_{4,2}}(3,1,24),
\end{array}
\end{eqnarray*}
\begin{eqnarray*}
\begin{array}{ll}
f_{2,4}= & \dif_3(e_{3,4})=
\dif_3(2,1,3,4)=\\ & y^{a_{2,1}}(12,3,4)-x^{a_{1,3}}(2,13,4)
+\underline{z^{a_{3,4}}(2,1,34)}-t^{a_{4,2}}(1,3,24),\\
f_{2,5}= &\dif_3(e_{3,5})=
\dif_3(1,3,2,4)=\\ & x^{a_{1,3}}(13,2,4)-z^{a_{3,2}}(1,23,4)+
\underline{y^{a_{2,4}}(1,3,24)}-t^{a_{4,1}}(3,2,14),\\
f_{2,6}= & \dif_3(e_{3,6})= 
\dif_3(1,2,3,4)=\\ & x^{a_{1,2}}(12,3,4)-y^{a_{2,3}}(1,23,4)+
\underline{z^{a_{3,4}}(1,2,34)}-t^{a_{4,1}}(2,3,14).
\end{array}
\end{eqnarray*}
\end{example}

\subsection{Cyc is a complex for any CB matrix}\label{cyc-complex-cb}
$\phantom{+}$\bigskip

\noindent Let $L$ be a $\cb$ matrix or, equivalently, let $\mbg$ be a
digraph. Let $\mcc_{\mathbb{G}}$ be the sequence of homomorphisms as
in Definition~\ref{cyccomplex}.

\begin{proposition}\label{chain-complex}
The sequence $\mcc_{\mathbb{G}}$ is a chain complex of free
$\mbk[x]$-modules.
\end{proposition}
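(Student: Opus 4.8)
The plan is to verify directly that $\dif_{k-1}\circ\dif_k=0$ for all $k$, working entirely at the level of the combinatorial formula \eqref{differential}; since all coefficients $x^{I\to J}$ are honest monomials (or $1$) for any $\cb$ matrix, no special structure beyond $a_{i,j}\geq 0$ is needed. First I would fix a cyclically ordered partition $(I_1,\ldots,I_{k+1})$ with $n\in I_{k+1}$ and apply $\dif_k$, obtaining a sum indexed by the $k$ positions at which two consecutive blocks are merged (with the wrap-around merge $I_{k+1}\cup I_1$ contributing the distinguished last term carrying a sign). Applying $\dif_{k-1}$ to each resulting $k$-block partition produces a double sum; the claim is that every term cancels in pairs.

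The combinatorial heart is the bookkeeping of which two merges give the same partition of $[n]$ into $k-1$ blocks. A term in the double sum is obtained by choosing two (cyclically) non-adjacent "cut edges" of the cyclic word $(I_1,\ldots,I_{k+1})$ to collapse, or by collapsing two edges sharing a block; in either case one checks that the monomial coefficient produced is independent of the order in which the two merges are performed — this is exactly where $x^{I_s\to I_{s+1}}\cdot x^{(I_s\cup I_{s+1})\to I_{s+2}} = x^{I_s\to I_{s+2}}\cdot x^{I_{s+1}\to I_{s+2}}\cdot(\text{common factor})$ type identities are used, and they hold because $x^{I\to J}=\prod_{i\in I}x_i^{\sum_{j\in J}a_{i,j}}$ is multiplicative in $J$ on disjoint unions: $x^{I\to J\cup J'}=x^{I\to J}\,x^{I\to J'}$ when $J,J'$ are disjoint, and similarly additive in $I$. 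I would then match up the two occurrences of each such partition in the double sum and verify that the accumulated signs are opposite. The sign computation is the standard one for a "cyclic bar-type" differential: the two ways of reaching a fixed pair of collapsed edges differ by a transposition in the order of application, which flips the product of the two Koszul-type signs $(-1)^{s-1}(-1)^{s'-1}$; the only point requiring care is the interaction of the wrap-around term (whose sign is a global $-$ rather than $(-1)^{s-1}$) with an ordinary term, and here one uses that cyclically rotating the representative so that the wrap-around merge becomes an ordinary merge changes the sign in precisely the compensating way, consistently with the identification $(I_1,\ldots,I_{k+1})=(I_2,\ldots,I_{k+1},I_1)$.

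Concretely I would organize the verification into three cases for an unordered pair of merges: (i) two "far apart" cuts involving four distinct blocks, (ii) two adjacent cuts sharing a middle block (producing a triple merge $I_{s}\cup I_{s+1}\cup I_{s+2}$), and (iii) a cut adjacent to the wrap-around cut. In case (i) commuting the merges is immediate and the sign flip is the transposition sign; in case (ii) the coefficient identity $x^{I_s\to I_{s+1}\cup I_{s+2}}=x^{I_s\to I_{s+1}}x^{I_s\to I_{s+2}}$ together with $x^{I_{s+1}\to I_{s+2}}$ appearing on one branch only is what makes the two contributions equal, and the signs again differ by one. Case (iii) is case (ii) after a cyclic rotation, reduced to it by the identification of representatives. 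As a sanity check I would confirm that the computation reproduces $\dif_1\circ\dif_2=0$ on the explicit $n=4$ complex of Example~\ref{cyc4}, where it is visibly a finite check.

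The main obstacle I anticipate is purely notational: keeping a clean, rotation-invariant labeling of the $r_k$ basis elements and of the two "halves" of $\dif_k$ (the $(-1)^{s-1}$ part and the lone wrap-around part) so that the pairing of cancelling terms, and especially the sign in case (iii), can be stated without an unwieldy case explosion. I would handle this by temporarily dropping the normalization "$n\in I_{k+1}$", working with genuine cyclic words of blocks and a differential that treats all $k+1$ cyclic edges symmetrically up to an overall sign attached to a chosen basepoint, proving $\dif^2=0$ in that symmetric setting, and only at the end translating back to the normalized representatives used in Definition~\ref{cyccomplex}. This is the same device by which one shows the cyclic/necklace complexes of \cite{ms} are complexes, and it makes the sign bookkeeping routine rather than delicate.
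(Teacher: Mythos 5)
Your proposal is essentially the paper's own proof: expand $\dif_{k-1}\circ\dif_{k}$ on a basis element, observe that each doubly-merged partition occurs exactly twice (once for each order of the two merges), and cancel the pair using the multiplicativity identities $x^{A\cup B\to C}=x^{A\to C}x^{B\to C}$ and $x^{A\to B\cup C}=x^{A\to B}x^{A\to C}$ together with a case-by-case sign check, including the interactions with the wrap-around term. The one caveat is that the paper performs the wrap-around sign checks directly on the normalized representatives (with $n$ in the last block) rather than via your proposed passage to a fully symmetric cyclic differential; since the wrap-around term carries the fixed sign $-1$ rather than an alternating $(-1)^{k}$, that rotation shortcut would itself need verification, but your fallback direct check of cases (i)--(iii) is exactly what the paper does, so the substance is the same.
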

\begin{proof}
Fix a basis element $(I_1,I_2,...,I_{k+1})\in \mcb_k$ of $\mcc_k$,
where $2\leq k\leq n-1$.  We need to show that
$\dif_{k-1}(\dif_{k}(I_{1} ,I_{2},...,I_{k+1}))=0$.  Clearly,
$\dif_{k-1}(\dif_{k}(I_1,\ldots,I_{k+1}))$ is expressed as a linear
combination of $k(k+1)$ elements of $\mcc_{k-2}$. Indeed:
\begin{eqnarray}\label{dd}
&& \dif_{k-1}(\dif_{k}(I_1,\ldots,I_{k+1}))=\\ &&
  \sum_{s=1}^{k}(-1)^{s-1}x^{I_s\to I_{s+1}}\dif_{k-1}(I_1,\ldots
  ,I_s\cup I_{s+1},\ldots ,I_{k+1})\notag \\ && -x^{I_{k+1}\to
    I_1}\dif_{k-1}(I_2,\ldots,I_{k},I_1\cup I_{k+1}). \notag
\end{eqnarray}
For $s=1$, the expression for $\dif_{k-1}(I_1\cup
I_2,\ldots ,I_{k+1})$ is equal to:
\begin{eqnarray}\label{s=1}
&& x^{I_1\cup I_2\to I_3}(I_1\cup I_2\cup I_3,\ldots ,I_{k+1})\\ && 
 +\sum_{t=3}^{k}(-1)^{t-2}x^{I_t\to I_{t+1}}(I_1\cup I_2,\ldots
  ,I_t\cup I_{t+1},\ldots ,I_{k+1}) \notag \\ && -x^{I_{k+1}\to
    I_{1}\cup I_2}(I_3,\ldots ,I_{k},I_1\cup I_2\cup I_{k+1}).  \notag
\end{eqnarray}
For $s=2$, the expression for $\dif_{k-1}(I_1, I_2\cup I_3,\ldots
,I_{k+1})$ is:
\begin{eqnarray}\label{s=2}
&& x^{I_1\to I_2\cup I_3}(I_1\cup I_2\cup I_3,\ldots ,I_{k+1})\\&&
  -x^{I_2\cup I_3\to I_4}(I_1,I_2\cup I_3\cup I_4,\ldots,I_{k+1}) 
  \notag \\ && +\sum_{t=4}^{k}(-1)^{t-2}x^{I_t\to I_{t+1}}(I_1,I_2\cup
  I_3,\ldots ,I_t\cup I_{t+1},\ldots ,I_{k+1}) \notag \\ &&
  -x^{I_{k+1}\to I_{1}}(I_2\cup I_3,\ldots ,I_{k},I_1\cup
  I_{k+1}). \notag
\end{eqnarray}
For $s\in\{3,\ldots ,k-2\}$, the expression for $\dif_{k-1}(I_1,\ldots
,I_s\cup I_{s+1},\ldots ,I_{k+1})$ is:
\begin{eqnarray}\label{s=3tok-2}
&&\sum_{t=1}^{s-2}(-1)^{t-1}x^{I_t\to I_{t+1}}(I_1,\ldots ,I_t\cup
  I_{t+1},\ldots , I_s\cup I_{s+1},\ldots ,I_{k+1})
  \\ &&+ \sum_{t=s+2}^{k}(-1)^{t-2}x^{I_t\to I_{t+1}}(I_1,\ldots
  ,I_s\cup I_{s+1},\ldots , I_t\cup I_{t+1},\ldots ,I_{k+1})\notag
  \\ &&+ (-1)^{s-2}x^{I_{s-1}\to I_{s}\cup I_{s+1}}(I_1,\ldots
  ,I_{s-1}\cup I_{s}\cup I_{s+1},\ldots, I_{k+1}) \notag
  \\ &&+ (-1)^{s-1}x^{I_{s}\cup I_{s+1}\to I_{s+2}}(I_1,\ldots
  ,I_{s}\cup I_{s+1}\cup I_{s+2},\ldots, I_{k+1}) \notag \\ &&
  -x^{I_{k+1}\to I_{1}}(I_2,\ldots ,I_s\cup I_{s+1},\ldots ,I_1\cup
  I_{k+1}).  \notag
\end{eqnarray}
For $s=k-1$, the expression for $\dif_{k-1}(I_1,\ldots ,I_{k-1}\cup
I_k,I_{k+1})$ is equal to:
\begin{eqnarray}\label{s=k-1}
&&\sum_{t=1}^{k-3}(-1)^{t-1}x^{I_t\to I_{t+1}}(I_1,\ldots ,I_t\cup
  I_{t+1},\ldots , I_{k-1}\cup I_k,I_{k+1})\\ &&
  +(-1)^{k-3}x^{I_{k-2}\to I_{k-1}\cup I_k}(I_1,\ldots ,I_{k-2}\cup
  I_{k-1}\cup I_k,I_{k+1}) \notag \\ && +(-1)^{k-2}x^{I_{k-1}\cup
    I_{k}\to I_{k+1}}(I_1,\ldots , I_{k-1}\cup I_k\cup I_{k+1}) \notag
  \\ && -x^{I_{k+1}\to I_{1}}(I_2,\ldots ,I_{k-1}\cup I_{k}, I_1\cup
  I_{k+1}). \notag
\end{eqnarray}
For $s=k$, we have $\dif_{k-1}(I_1,\ldots ,I_k\cup I_{k+1})$ equal to
\begin{eqnarray}\label{s=k}
&& \sum_{t=1}^{k-2}(-1)^{t-1}x^{I_t\to I_{t+1}}(I_1,\ldots ,I_t\cup
  I_{t+1},\ldots , I_k\cup I_{k+1})\\ && +(-1)^{k-2}x^{I_{k-1}\to
    I_{k}\cup I_{k+1}}(I_1,\ldots ,I_{k-1}\cup I_{k}\cup I_{k+1})
  \notag \\&& -x^{I_{k}\cup I_{k+1}\to I_{1}}(I_2,\ldots ,I_1\cup
  I_k\cup I_{k+1}). \notag
\end{eqnarray}
As for the final term, $\dif_{k-1}(I_2,\ldots,I_{k},I_1\cup
I_{k+1})$ is equal to:
\begin{eqnarray}\label{s=k+1}
&&\sum_{t=2}^{k-1}(-1)^{t-2}x^{I_t\to I_{t+1}}(I_2,\ldots ,I_t\cup
I_{t+1},\ldots ,I_1\cup I_{k+1})\\ 
&&+ (-1)^{k-2}x^{I_{k}\to I_{1}\cup I_{k+1}}(I_2,\ldots ,I_{1}\cup 
I_{k}\cup I_{k+1}) \notag \\ && - x^{I_{1}\cup
I_{k+1}\to I_{2}}(I_3,\ldots ,I_1\cup I_2\cup I_{k+1}). \notag
\end{eqnarray}
The basis elements of $\mcb_{k-2}$ that appear in these linear
combinations, from \eqref{s=1} to \eqref{s=k+1}, solely involve
partitions obtained from $(I_1,\ldots,I_{k+1})$ by coalescing twice
two adjacent blocks. Any such basis element appears twice, obtained by
doing two different mergings done in two different orders. For
instance, the element $(I_1\cup I_2\cup I_3,\ldots ,I_{k+1})$ appears
in the first summand in \eqref{s=1} and in \eqref{s=2}. In
\eqref{s=1}, one has first merged $I_1$ with $I_2$, obtaining
$x^{I_1\to I_2}(I_1\cup I_2,\ldots ,I_{k+1})$ and then, subsequently,
one has merged $I_1\cup I_2$ with $I_3$, giving $x^{I_1\to
  I_2}x^{I_1\cup I_2\to I_3}(I_1\cup I_2\cup I_3,\ldots ,I_{k+1})$. On
the other hand, the first summand in \eqref{s=2} is obtained by first
merging $I_2$ with $I_3$, getting $(-1)x^{I_2\to I_3}(I_1,I_2\cup
I_3,\ldots ,I_{k+1})$, and afterwards, on merging $I_1$ with $I_2\cup
I_3$, one obtains $(-1)x^{I_2\to I_3}x^{I_1\to I_2\cup I_3}(I_1\cup
I_2\cup I_3,\ldots ,I_{k+1})$. Observe that both monomial coefficients
are equal, but opposite in sign, so they cancel each with the other.

We list now the complete set of partitions appearing as basis elements
in the expression for
$\dif_{k-1}(\dif_{k}(I_1,\ldots,I_{k+1}))$. Concretely, and enumerated
accordingly to the $\srle$, the first $k-1$ are:
\begin{eqnarray*}
(I_1\cup I_2\cup I_3,\ldots ,I_{k+1}),(I_1\cup I_2,I_3\cup I_4,\ldots
  ,I_{k+1}),\ldots ,(I_1\cup I_2,\ldots , I_k\cup I_{k+1}).
\end{eqnarray*}
These elements appear in \eqref{s=1} and in \eqref{s=2}. 

Without mentioning where they come from, we have the following, still
enumerated with the $\srle$.

Next we have a set of $k-2$, all of them beginning with $(I_1,I_2\cup
I_3\Box)$, thus succeeding the $k-1$ above:
\begin{eqnarray*}
(I_1,I_2\cup I_3\cup I_4,\ldots ,I_{k+1}), (I_1,I_2\cup I_3, I_4\cup
  I_5,\ldots ,I_{k+1}),\ldots ,(I_1,I_2\cup I_3,\ldots ,I_k\cup I_{k+1}).
\end{eqnarray*}
Then we have the set of $k-3$, beginning with $(I_1,I_2,I_3\cup
I_4\Box)$, and so on. In the second last place, we have the set of $2$
beginning with $(I_1,\ldots ,I_{k-2}\cup I_{k-1}\Box)$, that is,
$(I_1,\ldots ,I_{k-2}\cup I_{k-1}\cup I_{k},I_{k+1})$ and $(I_1,\ldots
,I_{k-2},I_{k-1}\cup I_{k}\cup I_{k+1})$, and finally we have $(I_1,\ldots
I_{k-2}, I_{k-1}\cup I_{k}\cup I_{k+1})$. Altogether then, these
amount in total to $(k-2)+(k-3)+\cdots +2+1$ partitions.

In the expression for $\dif_{k-1}(\dif_{k}(I_1,\ldots,I_{k+1}))$,
there are still $k$ further basis elements of $\mcb_{k-2}$
involved. Concretely, these are the final terms in \eqref{s=1},
\eqref{s=2}, \eqref{s=3tok-2}, \eqref{s=k-1} and \eqref{s=k}, and all
the terms in \eqref{s=k+1}. It is not possible to enumerate them
together with the former ones (employing the $\srle$) unless more
information is given. Each involves the leftmost block $I_1$ being
merged with the rightmost block $I_{k+1}$. These basis elements are
also characterized by the fact that the corresponding coefficient term
contains the variable $x_n$ (possibly with exponent zero). Enumerated
among themselves (in the $\srle$), the first $k-1$ are as follows:
\begin{multline*}
(I_2\cup I_3,\ldots ,I_1\cup I_{k+1}),(I_2,I_3\cup I_4,\ldots ,I_1\cup
  I_{k+1}),\ldots ,\\(I_2,\ldots ,I_{k-1}\cup I_k,I_1\cup I_{k+1}),
  (I_2,\ldots ,I_1\cup I_k\cup I_{k+1}).
\end{multline*}
Finally, we have the last term $(I_3,\ldots ,I_1\cup I_2\cup
I_{k+1})$.

Thus a total amount of distinct $(k-1)+\sum_{i=1}^{k-2}i+k=k(k+1)/2$
basis elements of $\mcb_{k-2}$ appear in
$\dif_{k-1}(\dif_{k}(I_1,\ldots,I_{k+1}))$. As mentioned before, each
of them appears twice. This is consistent with the total number of
$k(k+1)$ elements in the linear expression for
$\dif_{k-1}(\dif_{k}(I_1,\ldots,I_{k+1}))$.

To finish the proof, one can check that the two terms involving each
basis element have the same monomial coefficient, but with opposite
sign (alternatively, one can use \eqref{disjoint} below). This will
prove that $\dif_{k-1}(\dif_{k}(I_1,\ldots,I_{k+1}))=0$.
\begin{itemize}
\item Terms $(I_1,\ldots,I_s\cup I_{s+1}\cup I_{s+2},\ldots
  ,I_{k+1})$, with $1\leq s\leq k-1$. The two monomial coefficients
  involving this basis element are:
\begin{eqnarray*}
&& (-1)^{s-1}x^{I_{s}\to I_{s+1}}(-1)^{s-1}x^{I_{s}\cup I_{s+1}\to
    I_{s+2}}\mbox{ and }\\ && (-1)^{s}x^{I_{s+1}\to
    I_{s+2}}(-1)^{s-1}x^{I_{s}\to I_{s+1}\cup I_{s+2}}.
\end{eqnarray*}
\item Terms $(I_1,\ldots,I_s\cup I_{s+1},\ldots ,I_{t}\cup
  I_{t+1},\ldots,I_{k+1})$, with $1\leq s$, $s+1<t$ and $t+1\leq k+1$.
  The two monomial coefficients involving this basis element are:
\begin{eqnarray*}
&& (-1)^{s-1}x^{I_{s}\to I_{s+1}}(-1)^{t-2}x^{I_{t}\to
    I_{t+1}}\mbox{ and }\\ && (-1)^{t-1}x^{I_{t}\to I_{t+1}}(-1)^{s-1}x^{I_{s}\to
    I_{s+1}}.
\end{eqnarray*}
\item Terms $(I_2,\ldots ,I_s\cup I_{s+1},\ldots ,I_1\cup I_{k+1})$,
  with $2\leq s\leq k-1$. Here, the two monomial coefficients are:
\begin{eqnarray*}
&& (-1)x^{I_{k+1}\to I_1}(-1)^{s-2}x^{I_s\to I_{s+1}}\mbox{ and }\\ &&
  (-1)^{s-1}x^{I_s\to I_{s+1}}(-1)x^{I_{k+1}\to I_{1}}.
\end{eqnarray*}
\item Terms $(I_2,\ldots ,I_1\cup I_{k}\cup I_{k+1})$. The two
  monomial coefficients are:
\begin{eqnarray*}
&& (-1)^{k-1}x^{I_k\to I_{k+1}}(-1)x^{I_{k}\cup I_{k+1}\to I_1}\mbox{ and }
\\ && (-1)x^{I_{k+1}\to
    I_1}(-1)^{k-2}x^{I_{k}\to I_1\cup I_{k+1}}. 
\end{eqnarray*}
\item Finally, the two monomial coefficients of $(I_3,\ldots ,I_1\cup
I_2\cup I_{k+1})$ are 
\begin{eqnarray*}
&& (-1)x^{I_{k+1}\to I_1}(-1)x^{I_{k+1}\cup I_1\to I_2}\mbox{ and }\\ && 
x^{I_1\to I_2}(-1)x^{I_{k+1}\to I_1\cup I_2}.
\end{eqnarray*}
\end{itemize}
It is easy to check that they are equal in pairs, but opposite in
sign. This finishes the proof.
\end{proof}

\begin{remark}\label{dif-m}
If $\mbg$ is strongly complete, or equivalently $L$ is a $\pcb$
matrix, then $\dif_k(\mcc_k)\subseteq \mfm\mcc_{k-1}$, since
$a_{i,j}>0$, for all $i,j=1,\ldots ,n$, ensures the containment
$x^{I_t\to I_{t+1}}\in (x_1,\ldots ,x_n)=\mfm$.
\end{remark}

\section{Gr\"obner bases for lattice ideals associated to 
strongly connected digraphs}

\subsection{Block echelon form of an ICB matrix}\label{block-icb}
$\phantom{+}$\bigskip

\noindent We start by defining what we will consider to be a block
echelon form of an $\icb$ matrix. In order to simplify the exposition,
the expression ``$I$-rows'' will stand for the set of rows whose row
subindex is in $I$, and analogously as regards the expression
``$J$-columns''. As in \eqref{reducedform}, given $I,J\subseteq [n]$,
let $M_{I,J}$ be the submatrix of $M$ defined by the $I$-rows and the
$J$-columns.

\begin{definition}\label{def-echelon}
Let $L$ be an $n\times n$ $\icb$ matrix and let $\delta$ be a positive
integer, $1\leq \delta\leq n-1$. Given $q_1,\ldots ,q_\delta\in
\mbn_+$, with $q_1+\cdots+q_\delta=n-1$, let $(I_1,\ldots ,I_\delta)$
be the partition of $[n-1]$ into $\delta$ blocks of cardinalities
$q_1,\ldots ,q_\delta$, defined by $I_1=[q_1]$, $I_1\cup
I_2=[q_1+q_2],\ldots ,I_1\cup\ldots\cup I_\delta=[n-1]$.  Set
$I_{\delta+1}=\{n\}$. Then $L$ is said to be in $\delta$-block echelon
form if $L_{I_{i},I_{j}}=0$, for every $i$ and $j$ such that $1\leq
j\leq \delta-1$ and $j+2\leq i\leq \delta+1$, and each column of
$L_{I_{j+1},I_j}$ is nonzero, for every $j=1,\ldots ,\delta$.
\end{definition}

\begin{example}
Let $L$ be the $\icb$ matrix below which is in $\delta$-block echelon
form, where $\delta=3$, and $q_1=1$, $q_2=2$ and $q_3=2$. Then
$(I_1,I_2,I_3)=(\{1\},\{2,3\},\{4,5\})$. Note that the $4$ blocks on
the diagonal have diagonal positive entries. The $3$ blocks below the
diagonal blocks have nonzero columns. All the remaining blocks below
the aforementioned are zero.
\begin{eqnarray*}
\newcommand\y{\cellcolor{gray!40}}
L=\left(\begin{array}{rrrrrr}
1&0&0&0&0&-1\\
\y0&1&-1&0&0&0\\
\y-1&0&1&0&0&0\\
0&\y0&\y-1&1&0&0\\
0&\y-1&\y0&0&1&0\\
0&0&0&\y-1&\y-1&2\\
\end{array}\right).
\end{eqnarray*}
\end{example}

\begin{remark}\label{echelon-reduction}
Any $n\times n$ $\icb$ matrix $L$ can be reduced by permutations to a
$\delta$-block echelon form.
\end{remark}
\begin{proof} (We remark that it may help to consider 
the shape of the matrix in the previous example.)

Indeed, set $I_{\delta+1}=\{n\}$. The notation $J_\delta$-columns,
with $J_{\delta}\subseteq [n-1]$, will denote the set of columns with
column subindex in $[n-1]$, having a nonzero entry in the last
row. Since $L$ is irreducible, then, necessarily, in the last row of
$L$ there are other nonzero entries apart from $a_{n,n}$ (this follows
too from the fact that since $L$ is, in particular, a $\cb$ matrix,
$0<a_{n,n}=\sum_{j\neq n}a_{n,j}$). Therefore $\lvert
J_\delta\rvert:=q_\delta\geq 1$. Move these $q_\delta$ columns to the
outer right of the first $n-1$ columns; we use the notation
$I_\delta$-columns to denote the corresponding set of columns. Then
perform the corresponding permutation of rows, so preserving the
structure of a $\cb$ matrix, while in the last row, which remains
untouched by the row permutations just carried out, we maintain the
property that all its $q_\delta+1$ nonzero entries are in the set of
$I_\delta\cup I_{\delta+1}$-columns.

Since $L$ is irreducible, then there must be at least one column to
the left of the set of $I_\delta\cup I_{\delta+1}$-columns having a
nonzero entry in the set of $I_\delta\cup I_{\delta+1}$-rows, these
being strictly below the diagonal. For, if not, setting
$I=[n]\setminus I_\delta\cup I_{\delta+1}$ and $J=I_\delta\cup
I_{\delta+1}$, one would have a partition $(I,J)$ of $[n]$ into two
non-empty disjoint sets and such that $L_{J,I}=0$. Thus, $L$ would be
reducible, a contradiction (recall the definition of irreducible in
Section~\ref{ibm}). So, denote by $J_{\delta-1}\subseteq [n]\setminus
I_{\delta}\cup I_{\delta+1}$ the set of column indices to the left of
the set of $I_\delta\cup I_{\delta+1}$-columns which have a nonzero
entry in the set of $I_\delta\cup I_{\delta+1}$-rows, these being
strictly below the diagonal, where $\lvert
J_{\delta-1}\rvert:=q_{\delta-1}\geq 1$. Now proceed recursively.
\end{proof}

\begin{remark}\label{rightmost}
Let $L$ be an $\icb$ matrix in $\delta$-block echelon form. Let
$C\subseteq [n-1]$, $C\neq \varnothing$. Then the rightmost nonzero
coefficient of $(L\chi_C)^{\top}$ is negative.
\end{remark}
\begin{proof}
Keeping the notations in Definition~\ref{def-echelon}, set $C_i=C\cap
I_i$ and let $m\geq 1$ be the maximum integer with $C_m\neq
\varnothing$. Clearly
$\chi_C=\sum_{i=1}^{\delta}\chi_{C_i}=\sum_{i=1}^m\chi_{C_i}$ and
$L\chi_C=\sum_{i=1}^mL\chi_{C_i}$. Since all the columns to the left
of $I_m$ have zero $I_{m+1}$-rows, it is enough to see that
$L\chi_{C_m}\leq 0$ and different from zero. This follows from the
fact that $L_{I_{m+1},I_m}\leq 0$ and that each column of
$L_{I_{m+1},I_m}$ is nonzero.
\end{proof}

To finish this section, let us prove that the $\delta$-block-echelon
form reflects the natural partition of the set of vertices into
subsets of elements whose distance from the last vertex is constant.
This brings out another aspect of the relationship between $\icb$
matrices $L$ and $\delta$-block echelon form, this time arising from
the natural metric $\ud$ on $\mbg_{L}$.

\begin{lemma}\label{blockthenomega}
Let $L$ be an $n\times n$ $\icb$ matrix and let $\mbg_L=(\mbv,\mbe)$
its associated digraph. Suppose that $L$ is in $\delta$-block echelon
form and let $(I_1,\ldots, I_\delta)$ be the associated partition of
$[n-1]$ into $\delta$ blocks, with $I_{\delta+1}=\{n\}$. For
$i=1,\ldots ,\delta+1$, set $W_i:=\{v_k\in\mbv\mid k\in I_i\}$, so
that $W_{\delta+1}=\{v_n\}$. Then $(W_1,\ldots ,W_{\delta})$ is a
partition of $\mbv\setminus\{v_n\}$ into $\delta$ non-empty subsets of
$\mbv$. Moreover, for each $i=1,\ldots, \delta$, the set $W_i$
coincides with the set $V_i:=\{v\in\mbv\mid \ud(v_n,v)=\delta+1-i\}$.
\end{lemma}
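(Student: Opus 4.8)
The statement to prove is that, for an $\icb$ matrix $L$ in $\delta$-block echelon form, the block $W_i$ coincides with the ``distance shell'' $V_i=\{v\in\mbv\mid \ud(v_n,v)=\delta+1-i\}$ (with the reverse indexing, so that $W_\delta$ is the set of in-neighbours of $v_n$, and $W_1$ is the set of vertices at maximal distance $\delta$). That $(W_1,\ldots,W_\delta)$ is a partition of $\mbv\setminus\{v_n\}$ into non-empty sets is immediate from Definition~\ref{def-echelon}, since the $q_j\geq 1$ and $q_1+\cdots+q_\delta=n-1$. For the substantive part, the plan is to translate the block-echelon conditions on $L$ into statements about the arcs of $\mbg_L$, using the dictionary of Section~\ref{dictionary}: there is an arc $v_k\to v_\ell$ exactly when $a_{k,\ell}>0$, i.e.\ when the $(k,\ell)$ off-diagonal entry of $L$ is nonzero. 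The condition $L_{I_i,I_j}=0$ for $j\leq\delta-1$, $i\geq j+2$, says precisely that \emph{there is no arc from any vertex of $W_j$ to any vertex of $W_i$ when $i\geq j+2$}; equivalently, every arc out of $W_j$ lands in $W_{j-1}\cup W_j\cup W_{j+1}$ (taking $W_{\delta+1}=\{v_n\}$), i.e.\ an arc can drop the block-index by at most one step towards $v_n$. The condition that each column of $L_{I_{j+1},I_j}$ is nonzero says that \emph{every} vertex in $W_j$ has at least one arc into $W_{j+1}$, i.e.\ towards the next shell closer to $v_n$.

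From these two arc-theoretic facts the coincidence $W_i=V_i$ follows by a two-sided inclusion argument, which I would organize as a downward induction on $i$ from $i=\delta$ (equivalently, an induction on the distance $d:=\delta+1-i$ from $1$ up to $\delta$). For the base case $d=1$: a vertex $v$ has $\ud(v_n,v)=1$ iff there is an arc $v_n\to v$, iff $a_{n,v}>0$; by construction of the echelon form the nonzero off-diagonal entries of the last row $L_{\{n\},*}$ lie exactly in the $I_\delta$-columns, so this happens iff $v\in W_\delta$; hence $V_\delta=W_\delta$. For the inductive step, suppose $V_{i+1}\cup\cdots\cup V_\delta=W_{i+1}\cup\cdots\cup W_\delta$, i.e.\ the vertices at distance $\leq d-1=\delta-i$ from $v_n$ are exactly those in blocks with index $>i$. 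I then show $V_i=W_i$: if $v\in W_i$, then $v$ has an arc into $W_{i+1}$ (nonzero column of $L_{I_{i+1},I_i}$), a vertex at distance $d-1$, so $\ud(v_n,v)\leq d$; and $\ud(v_n,v)$ cannot be $<d$, else $v$ would lie in a block of index $>i$ by the inductive hypothesis, contradicting $v\in W_i$. Conversely, if $\ud(v_n,v)=d$, then $v\notin W_{i+1}\cup\cdots\cup W_\delta$ (those are the strictly closer vertices), so $v\in W_1\cup\cdots\cup W_i$; take a shortest path $v_n\to u_1\to\cdots\to u_d=v$, so $u_{d-1}$ is at distance $d-1$, hence $u_{d-1}\in W_{i+1}$, and the arc $u_{d-1}\to v$ goes from block $W_{i+1}$ to block containing $v$; but the no-long-backward-arc condition applied with $j=i+1$ (which needs $i+1\leq\delta-1$, i.e.\ $i\leq\delta-2$; the case $i=\delta-1$ handled together with the base step or directly) forces the target block index to be $\geq (i+1)-1=i$, so $v$ is in a block of index $\geq i$, whence combined with the above $v\in W_i$. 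This closes the induction, and since the $V_i$ (for $i=1,\ldots,\delta$) exhaust $\mbv\setminus\{v_n\}$ — every vertex has \emph{some} finite distance to $v_n$ by strong connectedness, and that distance is at most $\delta$ precisely because each step out of $W_j$ can reach $W_{j+1}$ — we get $W_i=V_i$ for all $i$.

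The main obstacle, and the point deserving care, is the precise bookkeeping of the index shifts: the echelon indexing runs $I_1,\ldots,I_\delta$ increasing ``away'' from $v_n$, whereas distance increases in the same direction, so the identification is $W_i\leftrightarrow$ distance $\delta+1-i$, and one must be consistent about (a) which block the last-row nonzero entries occupy (the $I_\delta$-block, giving the in-neighbours), (b) the exact range of $(i,j)$ for which $L_{I_i,I_j}=0$ versus the range for which $L_{I_{j+1},I_j}$ has nonzero columns, and (c) the boundary cases $j=\delta$ (arcs into $\{v_n\}$) and the outermost block $W_1$ (no vertices strictly farther, so no lower bound constraint is needed there). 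A secondary subtlety is that the ``each column of $L_{I_{j+1},I_j}$ is nonzero'' hypothesis is exactly what guarantees \emph{uniformly} for every vertex of $W_j$ a one-step move towards $v_n$; without it the distance shells could be coarser than the blocks. Once these indexing conventions are pinned down, each inclusion is a one-line argument, and I would present it essentially as written above, perhaps first isolating the two arc-theoretic reformulations as a short displayed observation before running the induction.
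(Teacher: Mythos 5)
Your overall strategy is the paper's: a decreasing induction on the block index, proving the two inclusions $W_i\subseteq V_i$ and $V_i\subseteq W_i$ by translating the two echelon conditions into statements about arcs. But the directional bookkeeping, which you yourself single out as the main point of care, is carried out backwards. With the paper's conventions there is an arc $v_k\to v_\ell$ exactly when $a_{k,\ell}>0$, and $L_{I,J}$ has rows $I$ and columns $J$; so in a block $L_{I_i,I_j}$ the \emph{rows} index the sources and the \emph{columns} the targets of arcs. Hence ``each column of $L_{I_{j+1},I_j}$ is nonzero'' means that every vertex of $W_j$ \emph{receives} an arc from some vertex of $W_{j+1}$, not (as you state) that every vertex of $W_j$ has an arc \emph{into} $W_{j+1}$; and ``$L_{I_i,I_j}=0$ for $i\geq j+2$'' means there are no arcs \emph{from} $W_i$ \emph{to} $W_j$, i.e.\ a vertex of $W_j$ receives arcs only from blocks of index $\leq j+1$, not that arcs out of $W_j$ avoid $W_i$. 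Both of your displayed ``arc-theoretic facts'' are false as written: in the paper's own $6\times 6$ example the arc $v_1\to v_6$ goes from $W_1$ to $W_{\delta+1}=\{v_n\}$, contradicting your ``every arc out of $W_j$ lands in $W_{j-1}\cup W_j\cup W_{j+1}$'', and the vertex $v_1\in W_1$ has no arc into $W_2$ at all, contradicting your reading of the nonzero-column condition (it does, however, receive the arc $v_3\to v_1$ from $W_2$, which is the correct reading). The same transposition underlies your aside that $W_\delta$ is ``the set of in-neighbours of $v_n$''; it is the set of vertices $v$ with an arc $v_n\to v$.

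This is not merely terminological: it breaks the inclusion $W_i\subseteq V_i$ as you argue it. From ``$v\in W_i$ has an arc \emph{into} a vertex of $W_{i+1}$ at distance $d-1$'' you cannot conclude $\ud(v_n,v)\leq d$, since $\ud$ is a directed distance and an out-arc from $v$ towards $v_n$ gives no path from $v_n$ to $v$. What the nonzero-column condition actually supplies — and what the paper uses, via the recursively built path $v_n\to v_{k_\delta}\to\cdots\to v_{k_{i+1}}\to v_{k_i}$ — is an arc from some $u\in W_{i+1}=V_{i+1}$ \emph{to} $v$, whence $\ud(v_n,v)\leq\ud(v_n,u)+1=d$. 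In the reverse inclusion your conclusion (an arc with source in $W_{i+1}$ has its target in a block of index $\geq i$) is the right one, but it follows from the untransposed reading of $L_{I_{i+1},I_j}=0$, not from the reformulations you wrote; and the side restriction you worry about ($i+1\leq\delta-1$) is an artifact of the transposition, since the constraint $j\leq\delta-1$ bears on the target (column) block, which here has index $\leq i-1\leq\delta-1$ automatically. Once you swap sources and targets consistently throughout, your proof is correct and is essentially the paper's.
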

\begin{proof}
Clearly, since $(I_1,\ldots, I_\delta)$ is a partition of $[n-1]$ into
$\delta$ non-empty blocks, then $(W_1,\ldots ,W_{\delta})$ is a
partition of $\mbv\setminus\{v_n\}$ into $\delta$ non-empty subsets of
$\mbv$. Let us prove that $W_i=V_i$ by decreasing induction on
$i=1,\ldots ,\delta+1$. Clearly $W_{\delta+1}=\{v_n\}=V_{\delta+1}$.

Fix $i$, with $1\leq i\leq\delta$. Suppose that
$W_{i+1}=V_{i+1},\ldots ,W_{\delta+1}=V_{\delta+1}$ and let us prove
$W_i=V_i$.

Take $v_{k_i}\in W_i$, so that $k_i\in I_i$.  Since
$L_{I_{i+1},I_{i}}$ has non-zero columns, there exists $k_{i+1}\in
I_{i+1}$, with $a_{k_{i+1},k_i}>0$, so there is an arc $v_{k_{i+1}}\to
v_{k_i}$ in $\mbg$. Similarly, since $L_{I_{i+2},I_{i+1}}$ has
non-zero columns, there exists $k_{i+2}\in I_{i+2}$, with
$a_{k_{i+2},k_{i+1}}>0$, and an arc $v_{k_{i+2}}\to v_{k_{i+1}}$ in
$\mbg$. Recursively, there are vertices $v_{k_j}\in W_{j}$ and a
directed path $v_n\to v_{k_{\delta}}\to\cdots \to v_{k_{i+1}}\to
v_{k_i}$. In particular, $\ud(v_n,v_{k_i})\leq \delta+1-i$ and
$v_{k_i}\in V_{i}\cup\ldots \cup V_{\delta+1}$. On the other hand, since
$(W_1,\ldots ,W_{\delta+1})$ is a partition of $\mbv$ and $v_{k_i}\in
W_i$, then $v_{k_i}\not\in W_{i+1}\cup\ldots \cup
W_{\delta+1}=V_{i+1}\cup\ldots\cup V_{\delta+1}$. Thus $v_{k_i}\in
V_i$ and it follows that $W_i\subseteq V_i$.

Now, take $v_{k}\in V_i$. Then there exist $v_n\to
v_{k_\delta}\to\cdots \to v_{k_{i+1}}\to v_k$, a directed path of
minimum unweighted length $\delta+1-i$. In particular,
$\ud(v_n,v_{k_{i+1}})\leq \delta-i$ and, in fact, this is an equality,
otherwise $\ud(v_n,v_k)<\delta+1-i$. Hence $v_{k_{i+1}}\in
V_{i+1}=W_{i+1}$ and $k_{i+1}\in I_{i+1}$. The existence of the arc
$v_{k_{i+1}}\to v_k$ ensures $a_{k_{i+1},k}>0$. Since $L$ is in
$\delta$-block echelon form, the index $k$ must sit inside
$I_i\cup\ldots \cup I_{\delta +1}$. If $k\in I_{i+1}\cup\ldots \cup
I_{\delta+1}$, then $v_k\in W_{i+1}\cup\ldots \cup
W_{\delta+1}=V_{i+1}\cup\ldots\cup V_{\delta+1}$, a contradiction with
the fact that $V_i$ is clearly disjoint with $V_{i+1}\cup\ldots\cup
V_{\delta+1}$.  Thus $k\in I_{i}$ and $v_k\in V_i$ and we derive the
opposite inclusion $V_i\subseteq W_i$.
\end{proof}

\subsection{A convenient enumeration of the vertices} 
$\phantom{+}$\bigskip

\noindent Taking into acccount the former lemma, let us define a
special enumeration on the set of vertices and arcs of a strongly
connected digraph, and hence on the set of indeterminates $x_1,\ldots
,x_n$ (see \cite[p.~7]{crs}, \cite[Section~2.2.3]{mos}, for the case
where $\mbg$ is undirected, or, equivalently, $L_{\mathbb{G}}$ is
symmetric).

\begin{definition}\label{def-omega}
Let $\mbg=(\mbv,\mbe)$ be a strongly connected digraph.  Fix a
distinguished vertex $\omega$ in $\mbv$. Since $\mbg$ is finite,
$\delta:=\max\{\ud(\omega,v)\mid v\in \mbv\}$ is finite and
$\delta\geq 1$. Consider the partition $(V_1,\ldots ,V_{\delta})$ of
$\mbv\setminus\{\omega\}$ into $\delta$ non-empty subsets defined by
$V_i=\{v\in \mbv\mid \ud(\omega,v)=\delta+1-i\}$, for $i=1,\ldots
,\delta$. Set $V_{\delta+1}=\{\omega\}$.  Let $q_i=\lvert V_i\rvert$
be the cardinality of each $V_i$. Take the partition $(I_1,\ldots
,I_\delta)$ of $[n-1]$ into $\delta$ blocks of cardinalities
$q_1,\ldots ,q_\delta$, defined by $I_1=[q_1]$, $I_1\cup
I_2=[q_1+q_2],\ldots ,I_1\cup\ldots\cup I_\delta=[n-1]$. Set
$I_{\delta+1}=\{n\}$. The $(\omega,\delta)$-enumeration of the
vertices and arcs of $\mbg$ is determined by assigning the set of
indices $I_i$ to the set of vertices $V_i$.  That is, $V_i=\{v_k\in
\mbv\mid k\in I_i\}$, for $i=1,\ldots ,\delta$. The index $n$ is
assigned to the vertex $\omega$, so that $\omega$ becomes $v_n$. In
particular, if $i<j$, then $\ud(v_n,v_i)\geq \ud(v_n,v_j)$. On the
other hand, if $\ud(v_n,v_i)>\ud(v_n,v_j)$, then $i<j$. Clearly,
whenever $\ud(v_n,v_i)=\ud(v_n,v_j)$, the three possibilities $i<j$,
$i=j$ or $i>j$ may happen.
\end{definition}

In Lemma~\ref{blockthenomega} we have seen that if $L$ is an $n\times
n$ $\icb$ matrix in $\delta$-block echelon form, then the
corresponding digraph $\mbg_L$ has an $(\omega,\delta)$-enumeration
(where $\omega$ is taken to be the last vertex). Now let us prove that
a digraph $\mbg$ with an $(\omega,\delta)$-enumeration has the
corresponding $\icb$ matrix $L_{\mathbb{G}}$ in $\delta$-block echelon
form.

\begin{lemma}\label{omegathenblock}
Let $\mbg=(\mbv,\mbe)$ be a strongly connected digraph with an
$(\omega,\delta)$-enumera-tion. Let $L=L_{\mathbb{G}}$ be the $\icb$
matrix associated to $\mbg$. Then $L$ is in $\delta$-block echelon
form.
\end{lemma}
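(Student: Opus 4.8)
The plan is to unwind both clauses of Definition~\ref{def-echelon} into statements about the arcs of $\mbg$ and then read each one off directly from the distance partition $(V_1,\ldots,V_{\delta+1})$ underlying the $(\omega,\delta)$-enumeration. The one ingredient needed is the elementary fact that if $v_k\to v_\ell$ is an arc of $\mbg$, then $\ud(\omega,v_\ell)\leq\ud(\omega,v_k)+1$: appending this arc to a shortest directed path from $\omega$ to $v_k$ gives a directed walk, hence a directed path, from $\omega$ to $v_\ell$. Throughout I use the convention $\ud(\omega,\omega)=0$, so that $\ud(\omega,v)=\delta+1-i$ holds uniformly for $v\in V_i$ and $i=1,\ldots,\delta+1$ (recall $V_{\delta+1}=\{\omega\}$).

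First I would verify the vanishing $L_{I_i,I_j}=0$ whenever $1\leq j\leq\delta-1$ and $j+2\leq i\leq\delta+1$. Such a block lies strictly below the diagonal blocks, so its entries are $-a_{k,\ell}$ with $v_k\in V_i$ and $v_\ell\in V_j$; hence it is enough to exclude an arc $v_k\to v_\ell$. If there were one, the observation above and the uniform distance formula would give $\delta+1-j=\ud(\omega,v_\ell)\leq\ud(\omega,v_k)+1=(\delta+1-i)+1$, forcing $i\leq j+1$, against $i\geq j+2$.

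Next I would show that, for each $j=1,\ldots,\delta$, every column of $L_{I_{j+1},I_j}$ is nonzero. Fix $\ell\in I_j$, so $v_\ell\in V_j$ and $\ud(\omega,v_\ell)=\delta+1-j\geq 1$. Take a shortest directed path $\omega\to\cdots\to v_k\to v_\ell$ (possibly the single arc $\omega\to v_\ell$) and let $v_k$ be its penultimate vertex, allowing $v_k=\omega$ when the path has length $1$. Minimality forces $\ud(\omega,v_k)=\ud(\omega,v_\ell)-1=\delta-j=\delta+1-(j+1)$, so $v_k\in V_{j+1}$, that is, $k\in I_{j+1}$; since $v_k\to v_\ell$ is an arc, $a_{k,\ell}>0$, and so the column of $L_{I_{j+1},I_j}$ indexed by $\ell$ is nonzero. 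This settles both clauses of Definition~\ref{def-echelon}. I do not expect a real obstacle: the whole argument is the bookkeeping that the $(\omega,\delta)$-enumeration was designed to make transparent, and is essentially the converse of Lemma~\ref{blockthenomega}; the only points needing slight care are the boundary case $j=\delta$ (where the penultimate vertex is $\omega=v_n$, lying in $V_{\delta+1}$) and the justification, via minimality, that the penultimate vertex of a shortest path has distance exactly one less from $\omega$.
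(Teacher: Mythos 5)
Your proof is correct and takes essentially the same route as the paper's: the vanishing of the blocks $L_{I_i,I_j}$ for $i\geq j+2$ follows from the inequality $\ud(\omega,v_\ell)\leq \ud(\omega,v_k)+1$ along an arc, which is just the paper's argument of appending the arc to a shortest path from $\omega$ and deriving a contradiction with $\ud(\omega,v_\ell)=\delta+1-j$. Likewise, your use of the penultimate vertex of a shortest path from $\omega$ to get a nonzero entry in each column of $L_{I_{j+1},I_j}$ (including the boundary case $j=\delta$, where that vertex is $\omega=v_n$) is exactly the paper's second step.
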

\begin{proof}
With the notation in Definition~\ref{def-omega}, 
\begin{eqnarray*}
\{v_k\in \mbv\mid
k\in I_i\}=\{v_k\in \mbv\mid \ud(\omega,v_k)=\delta+1-i\}=:V_i, 
\end{eqnarray*}
for $i=1,\ldots ,\delta$, and $I_{\delta+1}=\{n\}$ and $\omega=v_n$.
We want to prove that $L_{I_{i},I_{j}}=0$, for every $i$ and $j$ such
that $1\leq j\leq \delta-1$ and $j+2\leq i\leq \delta+1$, and each
column of $L_{I_{j+1},I_j}$ is nonzero, for every $j=1,\ldots
,\delta$.

So, fix $j$ with $1\leq j\leq \delta-1$ and $v_k\in V_j$. Then, we
need to prove that
\begin{eqnarray}\label{distances}
a_{i,k}=0\mbox{, for all }i\in I_{j+2}\cup\ldots \cup
  I_{\delta+1}\mbox{ ; } a_{i,k}\neq 0\mbox{, for some }i\in I_{j+1}.
\end{eqnarray}
Note that, for $j=\delta$, then $I_{j+1}=I_{\delta+1}=\{n\}$ and
$a_{i,k}=a_{n,k}\neq 0$, for all $k\in I_\delta$, since the vertices
$v_k\in V_\delta$ are such that $\ud(\omega,v_k)=1$.

Suppose that $a_{i,k}\neq 0$ for some $i\in I_{j+2}\cup\ldots \cup
I_{\delta+1}$. Let $v_n\to v_{i_1}\to \cdots \to v_{i_l}\to v_i$ be a
directed path of minimum unweighted length $l+1$, where $l+1\leq
\delta-j-1$ (because $i\in I_{j+2}\cup\ldots \cup
I_{\delta+1}$). Since $a_{i,k}\neq 0$, there is an arc $v_i\to v_k$ in
$\mbg$, and so a directed path from $v_n$ to $v_k$ of unweighted
length at most $\delta-j$, a contradiction, since
$\ud(v_n,v_k)=\delta+1-j$ (recall that $v_k\in V_j$). This proves the
first part of \eqref{distances}.

Now, take a directed path $v_n\to v_{i_1}\to \cdots \to
v_{i_{\delta-j}}\to v_k$ of minimum unweighted length
$\delta+1-j$. In particular, $\ud(\omega
,v_{i_{\delta-j}})=\delta-j$. Thus $i_{\delta-j}=:i\in I_{j+1}$ and
$a_{i,k}\neq 0$, since $v_i\to v_k$ is an arc of $\mbg$. This proves the
second part of \eqref{distances}.
\end{proof}

\begin{assumption}\label{echelon-omega}
Let $L$ be an $\icb$ matrix or, equivalently, let $\mbg_L$ be a
strongly connected digraph. From now on, we will assume that $L$ is
always in $\delta$-block echelon form or, equivalently, that $\mbg$
has a $(\omega,\delta)$-enumeration (see Lemmas~\ref{blockthenomega}
and \ref{omegathenblock}).

Note that, if $L$ is a $\pcb$ matrix or, equivalently, $\mbg$ is
strongly complete, this assumption is vacuous.
\end{assumption}

\subsection{The weighted reverse lexicographic order on the polynomial 
ring}$\phantom{+}$\bigskip

\noindent Let $L$ be an $\icb$ matrix or, equivalently, let $\mbg_L$
be a strongly connected digraph. Recall that, by
Assumption~\ref{grading}, $\deg(x_i)=\nu_i$ and
$\deg(x^\alpha)=\nu_1\alpha_1+\cdots +\nu_n\alpha_n$.

\begin{assumption}\label{wrlo}
From now on, we will always suppose that $\mba=\mbk[x]=\mbk[x_1,\ldots
  ,x_n]$ is endowed with the weighted reverse lexicographic order
($\wrlo$, for short). Concretely, given $\alpha,\beta\in\mbn^{n}$, we
set $x^\alpha>x^\beta$ precisely when either
$\deg(x^\alpha)>\deg(x^\beta)$, or $\deg(x^\alpha)=\deg(x^\beta)$ and
there exists $i\in\{1,\ldots ,n\}$ such that $\alpha_n=\beta_n,\ldots
,\alpha_{i+1}=\beta_{i+1}$ and $\alpha_i<\beta_i$. This last
condition will be written $\alpha-\beta=(*,\ldots,*,-,0,\ldots
,0)$. See, e.g, \cite[p.~13]{gp}, where this ordering is denoted by
${\bf wp}(\nu_1,\ldots ,\nu_n)$.
\end{assumption}

We insert a note of caution: it need no longer be the case that, under
this $\wrlo$, $x_1>\cdots >x_n$, since, for example, we have
$\deg(x_2)>\deg(x_1)$ whenever $\nu_2>\nu_1$.

Given a non-zero polynomial $f\in \mba$, we let $\lc(f)$, $\lm(f)$
and $\lt(f)$ denote the leading coefficient, the leading monomial
and the leading term, respectively, of $f$ with respect to the
$\wrlo$ (see, e.g., \cite[\S~2.2,~Definition~7]{clo}).

Recall that $f_C:=m_C-m_{\overline{C}}=\dif_1(C,\overline{C})$, where
$C\subseteq [n-1]$, $C\neq\varnothing$ (see Notation~\ref{fC}).

\begin{lemma}\label{lm}
Let $(C,\overline{C})\in\mcc_1$ and set
$f_C:=m_C-m_{\overline{C}}=\dif_1(C,\overline{C})$. Then $f_C$ is
homogeneous and $\lm(f_C)=m_C$. In particular, $\lt(f_C)=\lm(f_C)$.
\end{lemma}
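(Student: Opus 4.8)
The plan is to show that the first term $m_C = x^{C\to\overline C}$ of $f_C$ strictly dominates the second term $m_{\overline C} = x^{\overline C\to C}$ in the $\wrlo$. Homogeneity is already available: by Remark~\ref{Iind1C}, $f_C = x^{(L\chi_C)^+}-x^{(L\chi_C)^-}$ with $L\chi_C\in\mcl$, and by Remark~\ref{ILhomogeneous} every such binomial is homogeneous in the grading of Assumption~\ref{grading}. So $\deg(m_C)=\deg(m_{\overline C})$, and to compare the two monomials in the $\wrlo$ it suffices to examine the sign pattern of the exponent difference $(m_C)-(m_{\overline C})$, read from the right. By the computation in the proof of Remark~\ref{Iind1C}, this exponent difference is exactly $L\chi_C$ (the positive part supported on $C$, the negative part on $\overline C$, with disjoint support). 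Hence I must show that the rightmost nonzero coordinate of $(L\chi_C)^{\top}$ is negative, i.e.\ that $(L\chi_C)-(\;\cdot\;)$ has the shape $(*,\ldots,*,-,0,\ldots,0)$.

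First I would invoke Assumption~\ref{echelon-omega}, which puts $L$ in $\delta$-block echelon form. Then the key input is precisely Remark~\ref{rightmost}: for any nonempty $C\subseteq[n-1]$, the rightmost nonzero coefficient of $(L\chi_C)^{\top}$ is negative. Granting this, the rightmost index $i$ with $(L\chi_C)_i\neq 0$ satisfies $(L\chi_C)_i<0$, so $i\in\overline C$ (the negative part is supported on $\overline C$), and for all $j>i$ we have $(L\chi_C)_j=0$, meaning the $x_j$-exponents of $m_C$ and $m_{\overline C}$ agree for $j>i$ while the $x_i$-exponent of $m_C$ is $0$ and strictly less than that of $m_{\overline C}$. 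That is exactly the condition $\alpha-\beta=(*,\ldots,*,-,0,\ldots,0)$ of Assumption~\ref{wrlo} with $\alpha$ the exponent vector of $m_C$ and $\beta$ that of $m_{\overline C}$, together with $\deg(m_C)=\deg(m_{\overline C})$; hence $m_C>m_{\overline C}$ and $\lm(f_C)=m_C$. Since $\lc(f_C)=1$, we get $\lt(f_C)=\lm(f_C)=m_C$, as claimed.

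The only real content is the appeal to Remark~\ref{rightmost}, which is already proved in the excerpt; everything else is bookkeeping about where positive and negative parts of $L\chi_C$ sit (positives on $C$, negatives on $\overline C$, as established in Remark~\ref{Iind1C}) and a direct unwinding of the definition of the $\wrlo$. So I do not anticipate a genuine obstacle here; the one point to state carefully is that the rightmost block $I_{\delta+1}=\{n\}$ plays no special role beyond being handled inside the $\delta$-block echelon structure, and that $n\in\overline C$ by our standing convention on partitions $(C,\overline C)$, which is consistent with the sign conclusion. I would write the proof in three short lines: homogeneity (cite Remarks~\ref{Iind1C}, \ref{ILhomogeneous}); identification of the exponent difference with $L\chi_C$ (cite Remark~\ref{Iind1C}); rightmost-sign conclusion (cite Remark~\ref{rightmost}) and read off $\lm(f_C)=m_C$, whence $\lt=\lm$.
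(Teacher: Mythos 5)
Your proposal is correct and follows essentially the same route as the paper's proof: homogeneity and the identification $m_C=x^{(L\chi_C)^+}$, $m_{\overline{C}}=x^{(L\chi_C)^-}$ come from Remark~\ref{Iind1C}, and the comparison in the $\wrlo$ is reduced to the rightmost nonzero coordinate of $(L\chi_C)^{\top}$ being negative, which is exactly Remark~\ref{rightmost}. Nothing is missing.
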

\begin{proof}
That $f_C=m_C-m_{\overline{C}}$ is homogeneous is shown in
Remark~\ref{Iind1C}. So $\grade(m_C)=\grade(x^{C\to\overline{C}})$ is
equal to $\grade(m_{\overline{C}})=\grade(x^{\overline{C}\to
  C})$. Also by Remark~\ref{Iind1C}, we know that,
$m_C=x^{C\to\overline{C}}=x^{(L\chi_C)^+}$ and
$m_{\overline{C}}=x^{\overline{C}\to C}=x^{(L\chi_C)^-}$, where
$\chi_C$ is the incidence vector of $C$ (see Remark~\ref{srle}). Then
$(L\chi_C)^+-(L\chi_C)^-=L\chi_C$. So we must see that the rightmost
nonzero coefficient of $(L\chi_C)^{\top}$ is negative. But this
follows from Remark~\ref{rightmost}.
\end{proof}

If the variables $x_1,\ldots ,x_n$ are not enumerated according to
Assumption~\ref{echelon-omega}, then Lemma~\ref{lm} may fail.

\begin{example}
Consider the following $\cb$ matrices $L$ and $L^{\prime}$:
\begin{eqnarray*}
\newcommand\y{\cellcolor{gray!40}}
L=\left(\begin{array}{rrrr}
2&-2&0&0\\0&3&-3&0\\-1&0&5&-4\\0&0&-4&4
\end{array}\right), \mbox{ corresponding to }\mbg_L:
\vcenter{\xymatrix{ & \bullet_2 \ar[dr]^3 & \\ \bullet_1 \ar[ur]^2 & &
    \bullet_3 \ar[ll]_1 \ar@/^/[dl]^4 \\ & \bullet_4 \ar@/^/[ur]^4}};
\end{eqnarray*}
\begin{eqnarray*}
\newcommand\y{\cellcolor{gray!40}}
L^{\prime}=\left(\begin{array}{rrrr}
3&0&-3&0\\\y-2&2&0&0\\0&\y-1&5&-4\\0&0&\y-4&4
\end{array}\right), \mbox{ corresponding to }\mbg_{L^{\prime}}:
\vcenter{\xymatrix{ & \bullet_2 \ar[dl]_2 & \\ \bullet_1 \ar[rr]^3 & &
    \bullet_3 \ar[ul]_1 \ar@/^/[dl]^4 \\ & \bullet_4 \ar@/^/[ur]^4}}.
\end{eqnarray*}
Observe that $L^{\prime}$ arises from $L$ by the transposition of rows
and columns $1$ and $2$ of $L$. The corresponding digraphs $\mbg_L$
and $\mbg_{L^{\prime}}$ are the same, but with a re-enumeration of
vertices and arcs. Since $\mbg_L$ and $\mbg_{L^{\prime}}$ are strongly
connected, $L$ and $L^{\prime}$ are $\icb$ matrices. Note that
$L^{\prime}$ is in $3$-block echelon form, whereas $L$ is not.

A simple check shows that
\begin{eqnarray*}
&& \mu(L)=(12,8,24,24), \mbox{ so }\nu(L)=(3,2,6,6),\mbox{ and }\\&&
\mu(L^{\prime})=(8,12,24,24),\mbox{ so }\nu(L^{\prime})=(2,3,6,6).
\end{eqnarray*}
Imposing the usual $\wrlo$ on our polynomial ring $\mbk[x,y,z,t]$,
with $x_1,x_2,x_3,x_4$ being replaced by $x,y,z,t$, respectively, then
$\deg(x)=3$, $\deg(y)=2$, $\deg(z)=6$ and $\deg(t)=6$, when working
with $L$. Considering $L$ and $C:=\{2\}$, we have:
$f_{C}=m_{C}-m_{\overline{C}}$, $m_{C}=y^3$,
$m_{\overline{C}}=x^2$. Note that $y^3<x^2$ because
$(0,3,0,0)-(2,0,0,0)=(-2,3,0,0)$, which is consistent with
$(L\chi_C)^{\top}=(-2,3,0,0)$. Thus $\lm(f_C)=x^2=m_{\overline{C}}$.
\end{example}

\subsection{A Gr\"obner basis for the image of the first differential}
$\phantom{+}$\bigskip

\noindent Let $L$ be an $\icb$ matrix or, equivalently, let $\mbg_L$
be a strongly connected digraph.  Recall that $\mcb_k$ is the set of
elements of $\cyc_{n,k+1}$, enumerated employing the $\srle$ (see
Definition~\ref{cyccomplex}), forming a basis of the free
$\mbk[x]$-module $\mcc_k=\mbk[x]^{{\rm Cyc}_{n,k+1}}$. Concretely, in
degree zero: 
\begin{eqnarray*}
\dif_1(\mcb_1)=\{\dif_{1}(e_{1,1}),\ldots ,\dif_{1}(e_{1,r_1})\}=
\{f_{0,1},\ldots,f_{0,r_1}\}=\{f_C\mid C\subseteq [n-1],
C\neq\varnothing\}\subset \mcc_0.
\end{eqnarray*}
The purpose of this subsection is to prove that $\dif_1(\mcb_1)$ is a
Gr\"obner basis of $\dif_1(\mcc_1)$ (see also Notation~\ref{fC}). We
begin with some useful notations and remarks.

\begin{notation}\label{lcm-notation}
Given three disjoint subsets $A,B,C\subset [n]$, note that:
\begin{eqnarray}\label{disjoint}
x^{A\cup B\to C}=x^{A\to C}x^{B\to C}\mbox{ and }x^{A\to B\cup
  C}=x^{A\to B}x^{A\to C}.
\end{eqnarray}
We set:
\begin{eqnarray}
x^{(A\to B,C)^+} & := &
\frac{\Lcm(x^{A\to B},x^{A\to C})}{x^{A\to C}}= 
\frac{\prod_{i\in A}x_i^{\max(\sum_{j\in C}a_{i,j},\sum_{j\in B}a_{i,j})}}
{\prod_{i\in A}x_i^{\sum_{j\in C}a_{i,j}}} \\ & = & 
\prod_{i\in A}x_i^{(\sum_{j\in B}a_{i,j}-\sum_{j\in C}a_{i,j})^{+}}. \notag
\end{eqnarray}
In particular,
\begin{eqnarray}\label{lcm-equality}
\Lcm(x^{A\to B},x^{A\to C})=x^{(A\to B,C)^+}x^{A\to C}=x^{(A\to C,B)^+}x^{A\to B}.
\end{eqnarray}
\end{notation}

\begin{notation}\label{FG-notation}
Let $C$ and $D$ be two non-empty subsets of $[n-1]$. Write $E=C\cap
D$, $F=C\setminus E$, $G=D\setminus E$, $U=C\cup D$ and
$V=\overline{U}=[n]\setminus U$. In particular,
$\overline{C}=[n]\setminus C=G\cup V$ and $\overline{D}=[n]\setminus
D=F\cup V$.  The following picture may help in reading everything that
will follow.
\begin{eqnarray*}
\begin{array}{c|c|c}
\cap &D&\overline{D}\\\hline
C&E&F\\\hline
\overline{C}&G&V.
\end{array}
\end{eqnarray*}
\end{notation}

\begin{lemma}\label{fc=fd}
The restriction of the map $\dif_1:\mcc_1\to\mcc_0$ to the set
$\mcb_1$ is injective. Concretely, given $C,D$, two non-empty subsets
of $[n-1]$, then $f_C=f_D$ if and only if $C=D$.
\end{lemma}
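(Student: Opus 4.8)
The plan is to prove the equivalence in both directions. The backward implication $C = D \implies f_C = f_D$ is immediate from the definitions. For the forward implication, suppose $f_C = f_D$; since $f_C = m_C - m_{\overline C}$ with $\lm(f_C) = m_C$ by Lemma~\ref{lm}, and likewise $\lm(f_D) = m_D$, the equality $f_C = f_D$ forces $m_C = m_D$ and $m_{\overline C} = m_{\overline D}$. So it suffices to show that the monomial $m_C = x^{C \to \overline C}$ determines $C$ among non-empty subsets of $[n-1]$, i.e., that $C \mapsto m_C$ is injective.

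The key observation is the support computation already made in Remark~\ref{Iind1C}: $m_C = x^{(L\chi_C)^+} = \prod_{i \in C} x_i^{\sum_{j \in \overline C} a_{i,j}}$, and for each $i \in C$ the exponent $\sum_{j \in \overline C} a_{i,j}$ equals $(L\chi_C)_i > 0$ (it is the weighted out-degree of $v_i$ restricted to arcs landing in $\overline C$, which is strictly positive because $v_i$ has at least one out-neighbour and — crucially — since $C \subseteq [n-1]$ and $n \in \overline C$, or more simply because $(L\chi_C)_i = a_{i,i} - \sum_{j\in C\setminus\{i\}} a_{i,j} = \sum_{j\in\overline C} a_{i,j}$ and this is positive as shown in Remark~\ref{Iind1C}). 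Hence the support of $m_C$, namely $\{i : x_i \text{ divides } m_C\}$, is exactly $C$. Therefore $m_C = m_D$ implies $\supp(m_C) = \supp(m_D)$, i.e. $C = D$. Combined with the first paragraph this gives $f_C = f_D \implies C = D$, and the restriction of $\dif_1$ to $\mcb_1$ is injective.

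I do not expect a serious obstacle here; the only point requiring a little care is the justification that every exponent $\sum_{j\in\overline C} a_{i,j}$ (for $i \in C$) is strictly positive, so that $\supp(m_C)$ genuinely recovers all of $C$ rather than a proper subset. This is exactly the content of the computation in the proof of Remark~\ref{Iind1C} (where it is shown $(L\chi_C)_i$ is positive for $i \in I$ when we take $I = C$), so it can be quoted rather than redone. One should note that the argument uses $C \subseteq [n-1]$ only insofar as it is implicit in the setup $(C,\overline C) \in \mcc_1$; the positivity of the exponents holds for any non-empty proper subset $C$ of $[n]$ with $\overline C \neq \varnothing$, which is automatic here. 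It may be worth remarking in passing that this also shows $\overline C$ is recovered from $m_{\overline C} = x^{(L\chi_C)^-}$ by the same reasoning, giving a second, independent route to injectivity.
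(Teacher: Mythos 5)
Your first reduction is fine and agrees with the paper: from $f_C=f_D$ and Lemma~\ref{lm} you correctly get $m_C=m_D$ and $m_{\overline C}=m_{\overline D}$. The gap is in the next step: it is simply not true that $\supp(m_C)=C$. For $i\in C$ the exponent of $x_i$ in $m_C$ is $(L\chi_C)_i=\sum_{j\in\overline C}a_{i,j}$, and this vanishes whenever all out-arcs of $v_i$ land inside $C$; strong connectedness guarantees a directed path from $C$ out to $v_n$, not an arc from every vertex of $C$ into $\overline C$, and ``$v_i$ has an out-neighbour'' or ``$n\in\overline C$'' does not help. (The word ``positive'' in the proof of Remark~\ref{Iind1C} is an overstatement there; only non-negativity is needed and used to conclude $(L\chi_C)_i=((L\chi_C)^+)_i$, so quoting it does not rescue strict positivity.) The paper's own $\icb$, non-$\pcb$ example of the directed $4$-cycle (in the subsection on non-minimality) refutes your key claim outright: there $m_{\{1\}}=m_{\{1,2\}}=m_{\{1,2,3\}}=x$, while $f_{\{1\}}=x-y$, $f_{\{1,2\}}=x-z$, $f_{\{1,2,3\}}=x-t$. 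So $C\mapsto m_C$ is not injective, it does not ``suffice to show that $m_C$ determines $C$'', and the same defect invalidates your closing remark that $\overline C$ is recovered from $m_{\overline C}$ alone. (Your support argument is valid when $L$ is a $\pcb$ matrix, where all $a_{i,j}>0$, but the lemma concerns arbitrary $\icb$ matrices.)

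The information you discard, namely $m_{\overline C}=m_{\overline D}$, is exactly what is needed, and connectivity must be used globally rather than vertexwise. This is what the paper does: writing $E=C\cap D$, $F=C\setminus E$, $G=D\setminus E$, $V=[n]\setminus(C\cup D)$ and assuming $F\neq\varnothing$, the equality $m_C=m_D$ forces $a_{i,j}=0$ for all $i\in F$ and $j\in G\cup V=\overline C$, while $m_{\overline C}=m_{\overline D}$ forces $a_{i,j}=0$ for all $i\in F$ and $j\in E\cup G=D$; together $a_{i,j}=0$ for all $i\in F$ and $j\in\overline F$, so no directed path leaves $F$, contradicting that $\mbg_L$ is strongly connected (there is no path from $F$ to $v_n$). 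Any correct proof has to combine the two monomial equalities in some such way; as written, your argument fails at the support step.
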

\begin{proof}
Suppose that $f_C=f_D$. By Lemma~\ref{lm},
$m_C=\lt(f_C)=\lt(f_D)=m_D$. Thus $m_C=m_D$ and hence
$m_{\overline{C}}=m_{\overline{D}}$. Suppose that $C\neq D$. Since the
initial hypotheses are symmetrical in $C$ and $D$, we may assume
without loss of generality that $C\not\subseteq D$, that is, that
$F\neq\varnothing$, with Notation~\ref{FG-notation} in force. Using
\eqref{disjoint}, one has:
\begin{eqnarray*}
f_C & = & m_{C}-m_{\overline{C}}=x^{C\to \overline{C}}-x^{\overline{C}\to C}\\ & = &
  x^{E\to G}x^{E\to V}x^{F\to G}x^{F\to V}-x^{G\to E}x^{G\to F}x^{V\to
    E}x^{V\to F}
\end{eqnarray*}
and 
\begin{eqnarray*}
f_D & = & m_{D}-m_{\overline{D}}=x^{D\to \overline{D}}-x^{\overline{D}\to
    D} \\ & = & x^{E\to F}x^{E\to V}x^{G\to F}x^{G\to V}-x^{F\to
    E}x^{F\to G}x^{V\to E}x^{V\to G}.
\end{eqnarray*}
It follows from the equality $m_C=m_D$ that $x^{F\to G\cup V}=1$,
i.e., that $a_{i,j}=0$, for all $i\in F$ and $j\in \overline{C}=G\cup
V$. We deduce that $x^{F\to E\cup G}=1$ from the equality
$m_{\overline{C}}=m_{\overline{D}}$, i.e., that $a_{i,j}=0$, for all
$i\in F$ and $j\in D=E\cup G$. Thus $a_{i,j}=0$, for all $i\in F$ and
$j\in E\cup G\cup V=\overline{F}$. Therefore there is no directed path
joining any vertex in $F\neq\varnothing$ to the vertex $v_n\in
\overline{F}$, which is a contradiction to the hypothesis that
$\mbg_L$ is a strongly connected digraph.
\end{proof}

Let us calculate the $S$-polynomial $S(f_C,f_D)$ of
$f_C=\dif_1(C,\overline{C})$ and $f_D=\dif_1(D,\overline{D})$.
Observe that if $C=D$, then $f_C=f_D$ and $S(f_C,f_D)=0$.

\begin{lemma}\label{s-poly}
Let $C$ and $D$ be two non-empty distinct subsets of $[n-1]$. Consider
the monomials $l_{C,D}$ and $l_{D,C}$, defined as follows:
\begin{eqnarray*}
l_{C,D}=x^{(E\to G,F)^{+}}x^{F\to G}x^{V\to D}\phantom{+}\mbox{ and }\phantom{+}
l_{D,C}=x^{(E\to F,G)^{+}}x^{G\to F}x^{V\to C}.
\end{eqnarray*}
\begin{itemize}
\item[$(a)$] Suppose that $C\not\subset D$ and that $D\not\subset
  C$. Then $F,G$ are non-empty subsets of $[n-1]$, so
  $(F,\overline{F})$ and $(G,\overline{G})$ are in $\mcc_1$. Then the
  $S$-polynomial of $f_C$ and $f_D$ is
\begin{eqnarray}\label{sfcfd}
S(f_C,f_D)=l_{C,D}f_F-l_{D,C}f_G,
\end{eqnarray}
with $f_F=\dif_1(F,\overline{F})$ and
$f_G=\dif_1(G,\overline{G})$. Moreover,
\begin{eqnarray*}
\lm(S(f_C,f_D))\geq\lm(l_{C,D}f_F),\lm(l_{D,C}f_G).
\end{eqnarray*}
\item[$(b)$] Suppose, without loss of generality, that $C\subsetneq
  D$. Then $F=\varnothing$ and $G$ is a non-empty subset of $[n-1]$, so
  $(G,\overline{G})\in\mcc_1$. Then the $S$-polynomial of $f_C$ and
  $f_D$ is
\begin{eqnarray}\label{sfcfd0}
S(f_C,f_D)=-l_{D,C}f_G, 
\end{eqnarray}
with $f_G=\dif_1(G,\overline{G})$.  Moreover,
$\lm(S(f_C,f_D))\geq\lm(l_{D,C}f_G)$.
\end{itemize}
\end{lemma}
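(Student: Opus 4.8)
The plan is to compute $S(f_C,f_D)$ directly from the definition, using that $\lm(f_C)=m_C$ and $\lm(f_D)=m_D$ (Lemma~\ref{lm}) together with the multiplicativity identities~\eqref{disjoint} and~\eqref{lcm-equality}. By definition,
\[
S(f_C,f_D)=\frac{\lcm(m_C,m_D)}{m_C}\,f_C-\frac{\lcm(m_C,m_D)}{m_D}\,f_D.
\]
First I would rewrite $m_C=x^{C\to\overline C}$ and $m_D=x^{D\to\overline D}$ in terms of the cells $E,F,G,V$ of Notation~\ref{FG-notation}, as in the proof of Lemma~\ref{fc=fd}: using \eqref{disjoint} repeatedly, $m_C=x^{E\to G}x^{E\to V}x^{F\to G}x^{F\to V}$ and $m_D=x^{E\to F}x^{E\to V}x^{G\to F}x^{G\to V}$. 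The common factor $x^{E\to V}$ plays no role; the ``non-trivial'' interaction happens in the $E$-rows, where one compares $\sum_{j\in G}a_{i,j}$ against $\sum_{j\in F}a_{i,j}$, and this is exactly what the quantities $x^{(E\to G,F)^+}$ and $x^{(E\to F,G)^+}$ are designed to record via \eqref{lcm-equality}. Carefully tracking which variables survive in $\lcm(m_C,m_D)/m_C$ and $\lcm(m_C,m_D)/m_D$, I expect to obtain
\[
\frac{\lcm(m_C,m_D)}{m_C}=x^{(E\to F,G)^+}x^{G\to F}x^{V\to C}
\quad\text{and}\quad
\frac{\lcm(m_C,m_D)}{m_D}=x^{(E\to G,F)^+}x^{F\to G}x^{V\to D},
\]
where the $x^{V\to C}$, $x^{V\to D}$ factors come from the disjoint-support monomials $m_{\overline C}=x^{\overline C\to C}$, $m_{\overline D}=x^{\overline D\to D}$ contributing variables indexed in $V$ (note $V\subseteq\overline C\cap\overline D$). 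So $S(f_C,f_D)$ equals this cofactor times $f_C$ minus the other cofactor times $f_D$; substituting the two-term expressions for $f_C$ and $f_D$, the plan is to check that the two ``leading'' pieces ($m_C$ with its cofactor and $m_D$ with its cofactor) cancel — they must, by the very definition of an $S$-polynomial — and that the remaining two pieces reorganize, again via \eqref{disjoint} and \eqref{lcm-equality}, into $l_{C,D}f_F-l_{D,C}f_G$ in case $(a)$ (respectively $-l_{D,C}f_G$ in case $(b)$, where $F=\varnothing$ forces $f_F$ and its cofactor to drop out since $x^{(E\to G,F)^+}=x^{(E\to\varnothing,G)^+}=1$ and $x^{F\to G}=1$, so the $f_C$-part of the cofactor expansion telescopes). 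For case $(b)$ I would additionally note $C\subsetneq D$ gives $E=C$, $F=\varnothing$, $G=D\setminus C\neq\varnothing$, and $\overline D\subsetneq\overline C$, so $m_D\mid m_C$ and the $S$-polynomial collapses to a single multiple of $f_D$ that one re-expresses using $f_G$.

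For the ``moreover'' claims, the point is that $S(f_C,f_D)$ is homogeneous (it is a combination of homogeneous elements of the same degree, by Lemma~\ref{lm} and Remark~\ref{Iind1C}), so each term $l_{C,D}f_F$ and $l_{D,C}f_G$ has the same degree as $S(f_C,f_D)$. If $\lm(S(f_C,f_D))<\lm(l_{C,D}f_F)$ then, because everything is homogeneous of one degree, the leading term of $l_{C,D}f_F$ would have to be cancelled by the leading term of $l_{D,C}f_G$, forcing $\lm(l_{C,D}f_F)=\lm(l_{D,C}f_G)$; but $\lm(f_F)=m_F$ and $\lm(f_G)=m_G$ by Lemma~\ref{lm}, so this would read $l_{C,D}\,m_F=l_{D,C}\,m_G$, and one checks using the explicit monomial formulas that this equality is impossible (it would force, among other things, $F$ and $G$ to feed into the same blocks in an incompatible way — this is essentially the same strong-connectivity obstruction exploited in Lemma~\ref{fc=fd}, or can be ruled out by a direct degree/support comparison). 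Hence $\lm(S(f_C,f_D))\geq\lm(l_{C,D}f_F)$ and symmetrically $\geq\lm(l_{D,C}f_G)$.

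The main obstacle I anticipate is purely bookkeeping: correctly distributing the $\lcm$ across the four $E,F,G,V$-blocks and the two monomials of each $f_C$, $f_D$ — in particular getting the $x^{V\to C}$ versus $x^{V\to D}$ factors on the correct side and verifying that the $x^{(E\to F,G)^+}$, $x^{(E\to G,F)^+}$ terms emerge with the right roles. The sign in \eqref{sfcfd} (the minus in front of $l_{D,C}f_G$) must be tracked through the subtraction defining the $S$-polynomial; the cleanest way is to fix once and for all, via Lemma~\ref{lm}, that the leading monomials are $m_C$ and $m_D$ (not $m_{\overline C}$, $m_{\overline D}$), so that the cancellation in the $S$-polynomial kills the $m_C$- and $m_D$-terms and leaves $-m_{\overline C}$- and $+m_{\overline D}$-type terms, whose sign then propagates into $f_F$ and $f_G$ after re-expressing $m_{\overline C}=x^{\overline C\to C}$, $m_{\overline D}=x^{\overline D\to D}$ in block form. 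Everything else is a routine, if somewhat lengthy, monomial computation.
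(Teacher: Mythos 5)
Your plan breaks down at its very first computational step: the formulas you propose for the $S$-polynomial cofactors are not correct. Since $m_C=x^{C\to\overline{C}}$ and $m_D=x^{D\to\overline{D}}$ only involve variables indexed by $C=E\cup F$ and $D=E\cup G$ respectively, the quotients $\Lcm(m_C,m_D)/m_C$ and $\Lcm(m_C,m_D)/m_D$ cannot contain any variable indexed by $V$; a direct block-by-block computation gives
\begin{eqnarray*}
\frac{\Lcm(m_C,m_D)}{m_C}=x^{(E\to F,G)^{+}}x^{G\to F}x^{G\to V}
\quad\mbox{and}\quad
\frac{\Lcm(m_C,m_D)}{m_D}=x^{(E\to G,F)^{+}}x^{F\to G}x^{F\to V},
\end{eqnarray*}
i.e.\ the paper's $m_{D,C}$ and $m_{C,D}$, not $l_{D,C}$ and $l_{C,D}$. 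Your justification that ``the $x^{V\to C}$, $x^{V\to D}$ factors come from $m_{\overline C}$, $m_{\overline D}$'' misreads the definition of the $S$-polynomial: the cofactors are determined by the leading monomials alone, and the trailing monomials play no role in them. With your (incorrect) cofactors the ``leading pieces cancel by definition'' step fails, since they no longer multiply $m_C$ and $m_D$ to the common $\Lcm$.

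More importantly, by conflating $m_{D,C}$ with $l_{D,C}$ you have assumed away the entire content of the lemma. The real work is to show that $m_{D,C}f_C-m_{C,D}f_D$, whose coefficients carry the factors $x^{G\to V}$, $x^{F\to V}$, can be rewritten as $l_{C,D}f_F-l_{D,C}f_G$, whose coefficients carry the \emph{different} factors $x^{V\to D}$, $x^{V\to C}$. The paper does this by expanding each side into four monomials, killing one pair on each side via the identity $g_{G,F}x^{C\to G}-g_{F,G}x^{D\to F}=0$ (equation \eqref{zero}, used once directly and once by the $C\leftrightarrow D$ symmetry), and then matching the surviving monomials ($m_2=m_7$, $m_4=m_5$); none of this appears in your outline, and it is not mere bookkeeping. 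Two smaller points: in case $(b)$ the coefficient of $f_F$ disappears because $F=\varnothing$ forces $f_F=0$ (here $x^{(E\to G,F)^{+}}=x^{C\to G}\neq 1$ in general, contrary to your claim); and once the correct four-term expansion is available, the ``moreover'' inequalities are immediate because $S(f_C,f_D)$ is exactly the difference of the two monomials $\lm(l_{C,D}f_F)$ and $\lm(l_{D,C}f_G)$ — your homogeneity argument, which leans on an unproved impossibility of $l_{C,D}m_F=l_{D,C}m_G$, is not needed.
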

\begin{proof}
By definition (see, e.g., \cite[\S~2.6, Definition~4]{clo}),
$S(f_C,f_D)=m_{D,C}f_C-m_{C,D}f_D$, where
\begin{eqnarray}\label{mdc}
\phantom{+++} m_{D,C}=\frac{\Lcm(\lm(f_D),\lm(f_C))}{\lt(f_C)}\;
\mbox{ and }\; m_{C,D}=\frac{\Lcm(\lm(f_C),\lm(f_D))}{\lt(f_D)}.
\end{eqnarray}
Suppose that $C\not\subset D$ and $D\not\subset C$, so that
$F,G\neq\varnothing$. By Lemma~\ref{lm},
$\lt(f_C)=\lm(f_C)=m_C=x^{C\to\overline{C}}$ and
$\lt(f_D)=\lm(f_D)=m_D=x^{D\to\overline{D}}$. Using \eqref{disjoint}
and \eqref{lcm-equality}, and the fact that $\Lcm(fg,fh)=f\cdot
\Lcm(g,h)$ for elements $f,g,h\in \mba=\mbk[x]$, one has:
\begin{eqnarray}\label{lcmCD/C}
m_{D,C} & = &\frac{\Lcm(\lm(f_D),\lm(f_C))}{\lt(f_C)}\\ & = &
  \frac{\Lcm(x^{E\to F},x^{E\to G})}{x^{E\to G}}x^{G\to F}x^{G\to
    V}=x^{(E\to F,G)^{+}}x^{G\to F}x^{G\to V}. \notag
\end{eqnarray}
In a symmetric manner,
\begin{eqnarray}\label{lcmCD/D}
m_{C,D}=\frac{\Lcm(\lm(f_C),\lm(f_D))}{\lt(f_D)}=x^{(E\to
  G,F)^{+}}x^{F\to G}x^{F\to V}.
\end{eqnarray}
Set $g_{G,F}:=x^{(E\to F,G)^{+}}x^{G\to F}$ and $g_{F,G}:=x^{(E\to
  G,F)^{+}}x^{F\to G}$. Therefore $m_{D,C}=g_{G,F}x^{G\to V}$ and
$l_{D,C}=g_{G,F}x^{V\to C}$. Analogously, $m_{C,D}=g_{F,G}x^{F\to V}$
and $l_{C,D}=g_{F,G}x^{V\to D}$. Then
\begin{eqnarray*}
S(f_C,f_D) & = &g_{G,F}x^{G\to V}f_C-g_{F,G}x^{F\to V}f_{D}
\\ & = & g_{G,F}x^{G\to V}m_C-g_{G,F}x^{G\to V}m_{\overline{C}}-
g_{F,G}x^{F\to V}m_{D}+g_{F,G}x^{F\to V}m_{\overline{D}}.
\end{eqnarray*}
Set
\begin{eqnarray*}
&&m_1:=g_{G,F}x^{G\to V}m_C\phantom{+}\mbox{ ,
  }\phantom{+}m_2:=g_{G,F}x^{G\to V}m_{\overline{C}}\mbox{ , }
  \\&&m_3:=g_{F,G}x^{F\to V}m_{D}\phantom{+}\mbox{ ,
  }\phantom{+}m_4:=g_{F,G}x^{F\to V}m_{\overline{D}}\mbox{ , }
\end{eqnarray*}
so that $S(f_C,f_D)=m_1-m_2-m_3+m_4$. An easy computation using
\eqref{disjoint} shows that
\begin{eqnarray*}
m_1-m_3=(g_{G,F}x^{C\to G}-g_{F,G}x^{D\to F})x^{U\to V}.
\end{eqnarray*}
However, using \eqref{disjoint} and then \eqref{lcm-equality}, we get
\begin{eqnarray}\label{zero}
&& g_{G,F}x^{C\to G}-g_{F,G}x^{D\to F}  \\
&& =  x^{(E\to F,G)^{+}}x^{G\to F}x^{E\to G}x^{F\to G}-
x^{(E\to G,F)^{+}}x^{F\to G}x^{E\to F}x^{G\to F}\notag \\
&& =  (x^{(E\to F,G)^{+}}x^{E\to G}-x^{(E\to G,F)^{+}}x^{E\to F})x^{F\to G}x^{G\to F}=0.
\notag
\end{eqnarray}
Therefore, $S(f_C,f_D)=-m_2+m_4$. On the other hand,
\begin{eqnarray*}
l_{C,D}f_F-l_{D,C}f_G=l_{C,D}m_F-l_{C,D}m_{\overline{F}}-
l_{D,C}m_G+l_{D,C}f_{\overline{G}}.
\end{eqnarray*}
Set
\begin{eqnarray*}
&&m_5:=l_{C,D}m_{F}=g_{F,G}x^{V\to D}m_F\phantom{+}\mbox{ ,
  }\phantom{+} m_6:=l_{C,D}m_{\overline{F}}=g_{F,G}x^{V\to
    D}m_{\overline{F}}\mbox{ , }\\&& m_7:=l_{D,C}m_G=g_{G,F}x^{V\to
    C}m_G\phantom{+}\mbox{ ,
  }\phantom{+}m_8:=l_{D,C}m_{\overline{G}}=g_{G,F}x^{V\to
    C}m_{\overline{G}}\mbox{ , }
\end{eqnarray*}
so that $l_{C,D}f_F-l_{D,C}f_G=m_5-m_6-m_7+m_8$. An easy computation
using \eqref{disjoint} shows that
\begin{eqnarray*}
-m_6+m_8=(-g_{F,G}x^{D\to F}+g_{G,F}x^{C\to G})x^{V\to U}.
\end{eqnarray*}
The situation is symmetrical in $C$ and $D$. Swapping $C$ and $D$ in
an expression forces a swap in $F$ and $G$, with $E$, $U$ and $V$
invariant. Thus $-g_{F,G}x^{D\to F}+g_{G,F}x^{C\to G}=0$ follows by
symmetry from the equality \eqref{zero}. Therefore, $-m_6+m_8=0$ and
$l_{C,D}f_F-l_{D,C}f_G=m_5-m_7$. An easy computation using
\eqref{disjoint} shows that $m_2=m_7$ and, by symmetry,
$m_4=m_5$. Therefore,
$S(f_C,f_D)=-m_2+m_4=m_5-m_7=l_{C,D}f_F-l_{D,C}f_G$.

Observe that one can consider
$(F,\overline{F}),(G,\overline{G})\in\mcc_1$ due to the hypothesis
$F,G\neq\varnothing$. Using Lemma~\ref{lm}, it follows that
$\lm(f_F)=m_F$ and $\lm(f_G)=m_G$. Since $l_{C,D}$ and $l_{D,C}$ are
monomials, then $\lm(l_{C,D}f_F)=l_{C,D}m_F=m_5$ and
$\lm(l_{D,C}f_G)=l_{D,C}m_G=m_7$ (see, e.g., \cite[\S2.2,
  Exercise~11]{clo}).  On the other hand, $S(f_C,f_D)=-m_2+m_4$. Thus,
one of the two monomials, either $m_2$, or else $m_4$, is the leading
monomial of $S(f_C,f_D)$. If $m_2<m_4$, then
$\lm(l_{C,D}f_F)=m_5=m_4=\lm(S(f_C,f_D))$ and
$\lm(l_{D,C}f_G)=m_7=m_2<m_4=\lm(S(f_C,f_D))$. On the contrary, if
$m_4<m_2$, then $\lm(l_{C,D}f_F)=m_5=m_4<m_2=\lm(S(f_C,f_D))$ and
$\lm(l_{D,C}f_G)=m_7=m_2=\lm(S(f_C,f_D))$.

Finally, assume, without loss of generality, that $C\subsetneq
D$. Then $F=\varnothing$ and $f_F=0$. Hence $g_{G,F}=1$ and
$g_{F,G}=x^{C\to G}$; $m_{D,C}=x^{G\to V}$ and $l_{D,C}=x^{V\to C}$;
$m_{C,D}=x^{C\to G}$ and $l_{C,D}=x^{C\to G}x^{V\to D}$. As before,
$m_1-m_3=0$ and $S(f_C,f_D)=-m_2+m_4$. On the other hand, since
$F=\varnothing$, then $f_F=0$ and $l_{C,D}f_F-l_{D,C}f_G=-l_{D,C}f_G$,
which is readily seen to be equal to $-m_2+m_4$. Moreover, trivially
then, one has $\lm(S(f_C,f_D))\geq\lm(l_{D,C}f_G)$.
\end{proof}

\begin{proposition}\label{GB1}
The set $\dif_1(\mcb_1)=\{f_C\mid C\subseteq [n-1],
C\neq\varnothing\}$ is a Gr\"obner basis for the ideal
$\dif_1(\mcc_1)$.
\end{proposition}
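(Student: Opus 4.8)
The plan is to verify Buchberger's criterion, using Lemma~\ref{s-poly} as the sole nontrivial input. Since $\mcc_1=\mbk[x]^{\cyc_{n,2}}$ is a free $\mba$-module with basis $\mcb_1$ and $\dif_1$ is $\mba$-linear into $\mcc_0=\mba$, the image $\dif_1(\mcc_1)$ is precisely the ideal of $\mba$ generated by $\dif_1(\mcb_1)=\{f_C\mid C\subseteq[n-1],\ C\neq\varnothing\}$. I would then show that this generating set is a Gr\"obner basis by checking that every $S$-polynomial $S(f_C,f_D)$, for $C,D$ non-empty subsets of $[n-1]$, admits a \emph{standard representation} with respect to $\dif_1(\mcb_1)$: an expression $S(f_C,f_D)=\sum_k h_k f_{C_k}$ with $h_k\in\mba$, each $C_k\subseteq[n-1]$ non-empty, and $\lm(h_kf_{C_k})\leq\lm(S(f_C,f_D))$ whenever $h_kf_{C_k}\neq 0$. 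By the refined form of Buchberger's test (see, e.g., \cite{clo} or \cite{gp}), the existence of such representations for all pairs forces $\dif_1(\mcb_1)$ to be a Gr\"obner basis of the ideal it generates.

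First, if $C=D$ then $S(f_C,f_D)=0$ and there is nothing to check. If $C\neq D$, I would simply read off the required representation from Lemma~\ref{s-poly}. When neither of $C,D$ contains the other, part $(a)$ gives $S(f_C,f_D)=l_{C,D}f_F-l_{D,C}f_G$, where $l_{C,D},l_{D,C}$ are monomials of $\mba$, where $F=C\setminus D$ and $G=D\setminus C$ are non-empty subsets of $[n-1]$ (so that $f_F,f_G\in\dif_1(\mcb_1)$), and, crucially, where $\lm(l_{C,D}f_F),\lm(l_{D,C}f_G)\leq\lm(S(f_C,f_D))$; this is exactly a standard representation. When one of the two, say $C$, is properly contained in the other, part $(b)$ gives $S(f_C,f_D)=-l_{D,C}f_G$ with $G=D\setminus C$ non-empty in $[n-1]$ and $\lm(l_{D,C}f_G)\leq\lm(S(f_C,f_D))$, again a standard representation. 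Since $S(f_C,f_D)$ and $S(f_D,f_C)$ agree up to sign, these cases exhaust all unordered pairs, and the proposition follows.

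I do not expect a genuine obstacle: every cancellation and every leading-monomial inequality needed has already been established in Lemma~\ref{s-poly} (which itself rests on Lemma~\ref{lm} and Remark~\ref{rightmost}, hence on having $L$ in $\delta$-block echelon form, Assumption~\ref{echelon-omega}). The only points meriting a line of care are bookkeeping ones: that the coefficients $l_{C,D},l_{D,C}$ genuinely lie in $\mba$ (they are monomials by Notation~\ref{lcm-notation}); that $F$ and $G$ are admissible indices, i.e.\ non-empty subsets of $[n-1]$, which holds because $F\subseteq C\subseteq[n-1]$ and $G\subseteq D\subseteq[n-1]$; and that one invokes the standard-representation version of Buchberger's criterion rather than the ``division remainder is zero'' version, since the expressions supplied by Lemma~\ref{s-poly} are only claimed to satisfy the leading-monomial bound, which is precisely what that version of the criterion requires.
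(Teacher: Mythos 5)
Your argument is correct and is essentially the paper's proof: Proposition~\ref{GB1} is deduced there directly from Lemma~\ref{s-poly} together with the (standard-representation) form of Buchberger's criterion cited from \cite{clo}, exactly as you do. Your extra bookkeeping (that $F,G$ are non-empty subsets of $[n-1]$, that $l_{C,D},l_{D,C}$ are monomials, and that the leading-monomial bound from Lemma~\ref{s-poly} is what the criterion needs) is just a spelled-out version of the same route.
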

\begin{proof}
This follows from Lemma~\ref{s-poly} and the criterion of Buchberger
(see, e.g., \cite[\S~2.6, Theorem~6 and \S~2.9, Theorem~3]{clo}).
\end{proof}

\subsection{The Cyc complex is exact in degree zero}\label{exact-degree0}
$\phantom{+}$\bigskip

\noindent Let $L$ be an $\icb$ matrix or, equivalently, let $\mbg$ be
a strongly connected digraph.  Recall that $\dif_0:\mcc_0=\mbk[x]\to
\mbk[x]/I(\mcl)$ is defined to be the natural projection onto the
quotient ring, so that $\dif_1(\mcc_1)\subseteq I(\mcl)$
(Definition~\ref{dif0}).

\begin{proposition}\label{colonideal}
We have $\dif_1(\mcc_1):x_n=\dif_1(\mcc_1)$ and
$\dif_1(\mcc_1)=I(\mcl)=\ker(\dif_0)$.
\end{proposition}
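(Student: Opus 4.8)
The plan is to establish the colon identity first, read off from it that $x_n$ is a nonzerodivisor on $\mba/\dif_1(\mcc_1)$, and then combine this with Corollary~\ref{projdim} to pinch $\dif_1(\mcc_1)$ between $I(L)$ and $I(\mcl)=\hull(I(L))$ and force equality.

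For the colon identity I would exploit that, by Proposition~\ref{GB1}, $\dif_1(\mcb_1)=\{f_C\mid C\subseteq[n-1],\ C\neq\varnothing\}$ is a Gr\"obner basis of $\dif_1(\mcc_1)$ for the $\wrlo$, and that by Lemma~\ref{lm} its leading monomials $\lm(f_C)=m_C=x^{C\to\overline C}$ only involve the variables $x_i$ with $i\in C\subseteq[n-1]$; in particular $x_n$ divides no leading monomial. So, given $h$ with $x_nh\in\dif_1(\mcc_1)$, I would take the remainder $r$ of $h$ on division by $\dif_1(\mcb_1)$, so that $h-r\in\dif_1(\mcc_1)$ and no term of $r$ is divisible by any $m_C$; then $x_nr=x_nh-x_n(h-r)\in\dif_1(\mcc_1)$, and since $x_n$ divides no $m_C$, no term of $x_nr$ is divisible by any $m_C$ either. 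As $\dif_1(\mcb_1)$ is a Gr\"obner basis this forces $x_nr=0$, hence $r=0$ and $h\in\dif_1(\mcc_1)$. This yields $\dif_1(\mcc_1):x_n=\dif_1(\mcc_1)$, i.e. $x_n$ is a nonzerodivisor on $\mba/\dif_1(\mcc_1)$.

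For the equality, Remark~\ref{Iind1C} already gives $I(L)\subseteq\dif_1(\mcc_1)\subseteq I(\mcl)$, and Corollary~\ref{projdim} gives $I(\mcl)=\hull(I(L))$, so the three ideals have the same radical; hence the minimal primes of $\dif_1(\mcc_1)$ are exactly those of $I(\mcl)$, all of height $n-1$ since $\mba/I(\mcl)$ is Cohen--Macaulay of dimension $1$. Thus any embedded prime of $\dif_1(\mcc_1)$ would have height $n$, i.e. be $\mfm$; but $x_n\in\mfm$, while by the previous step $x_n$ lies in no associated prime of $\dif_1(\mcc_1)$. Hence $\dif_1(\mcc_1)$ has no embedded prime. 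Now fix a minimal prime $P$ of $I(L)$ and localize at $P$: since $\hull(I(L))$ shares the isolated primary components of $I(L)$, one has $(I(L))_P=(I(\mcl))_P$, whence the displayed inclusions give $(I(\mcl))_P\subseteq(\dif_1(\mcc_1))_P\subseteq(I(\mcl))_P$, i.e. $(\dif_1(\mcc_1))_P=(I(\mcl))_P$. As neither $\dif_1(\mcc_1)$ nor $I(\mcl)$ has an embedded prime, each equals the intersection of the contractions of its localizations at its minimal primes, and these coincide; therefore $\dif_1(\mcc_1)=I(\mcl)$. Finally $\ker(\dif_0)=I(\mcl)$ is immediate from Definition~\ref{dif0}.

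I expect the main obstacle to be keeping the commutative-algebra bookkeeping of the last step honest: the facts that $\hull(I(L))$ and $I(L)$ have the same isolated primary components, and that an ideal without embedded primes is recovered from its localizations at the minimal primes. The Gr\"obner-basis part is routine once one notices, via Lemma~\ref{lm} and Proposition~\ref{GB1}, that $x_n$ is absent from every leading term --- this is the real mechanism behind the statement, and it is exactly why the $(\omega,\delta)$-enumeration putting $\omega=v_n$ last was set up.
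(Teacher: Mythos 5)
Your proof of the colon identity is essentially the paper's own argument (division of $h$ by the Gr\"obner basis $\dif_1(\mcb_1)$, using that $x_n$ is absent from every leading monomial $m_C$ because $C\subseteq[n-1]$), so that half calls for no comment. For the equality $\dif_1(\mcc_1)=I(\mcl)$ you take a genuinely different route: the paper stays entirely inside colon-ideal language, quoting \cite[Proposition~5.7]{opv} in the form $I(\mcl)=I(L):x_n^m$ for $m\gg 0$ and then sandwiching in two lines, $I(\mcl)=I(L):x_n^m\subseteq\dif_1(\mcc_1):x_n^m=\dif_1(\mcc_1)\subseteq I(\mcl)$, where the middle equality is exactly the colon-stability you proved. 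You instead go through Corollary~\ref{projdim} (i.e.\ $I(\mcl)=\hull(I(L))$, which in the paper is itself deduced from the same result of \cite{opv}) and a primary-decomposition argument: identical minimal primes, absence of embedded primes, and agreement of localizations at the minimal primes. This works, but it buys the same conclusion at the price of more commutative-algebra bookkeeping, and one step should be made explicit: from ``an embedded prime of $\dif_1(\mcc_1)$ has height $n$'' you cannot conclude it equals $\mfm$ for an arbitrary ideal, since every maximal ideal of $\mbk[x]$ has height $n$; you need that $\dif_1(\mcc_1)$ is homogeneous for the positive weighting $\nu$ --- precisely the final assertion of Remark~\ref{Iind1C} --- so that its associated primes are homogeneous and the only homogeneous prime of height $n$ is the irrelevant ideal $\mfm$, whence $x_n\in\mfm$ contradicts your nonzerodivisor statement. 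With that citation inserted your argument is complete; the paper's sandwich simply avoids the whole unmixedness/localization discussion by using the colon-stability under $x_n$ once more, which is why its proof is shorter.
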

\begin{proof}
According to Definition~\ref{cyccomplex}, let 
\begin{eqnarray*}
\mcb_1=\{e_{1,1},\ldots ,e_{1,r_1}\}\mbox{ and
}\dif_1(\mcb_1)=\{f_{0,1},\ldots ,f_{0,r_1}\}\subset \mbk[x],
\end{eqnarray*}
enumerated employing the $\srle$.  By
Proposition~\ref{GB1}, $\dif_1(\mcb_1)$ is a Gr\"obner basis for the
ideal $\dif_1(\mcc_1)$. Now, take $f\in \dif_1(\mcc_1):x_n$. Using the
Division Algorithm, $f$ can be written as
\begin{eqnarray}\label{divalg}
f=g_1f_{0,1}+\cdots +g_{r_1}f_{0,r_1}+h, 
\end{eqnarray}
where either $h=0$, or else $h\neq 0$ and no term in $h$ is divisible
by any one of $\lt(f_{0,i})$, for $i=1,\ldots ,r_1$ (see, e.g.,
\cite[\S~2.3, Theorem~3]{clo}). Suppose that $h\neq 0$. On multiplying
by $x_n$ in \eqref{divalg}, one deduces that $x_nh\in \dif_1(\mcc_1)$
and so $\lt(x_nh)\in \lt(\dif_1(\mcc_1))$. Since $f_{0,1},\ldots
,f_{0,r_1}$ is a Gr\"obner basis of $\dif_1(\mcc_1)$,
$\lt(\dif_1(\mcc_1))=\langle \lt(f_{0,1}),\ldots
,\lt(f_{0,r_1})\rangle$. By, e.g., \cite[\S~2.4, Lemma~2]{clo},
$\lt(x_nh)=x_n\lt(h)$ is a multiple of some $\lt(f_{0,_i})$.  Since
$x_n$ does not occur in any $\lt(f_{0,i})$, it follows that $\lt(h)$
is a multiple of some $\lt(f_{0,i})$, a contradiction. Therefore
$\dif_1(\mcc_1):x_n=\dif_1(\mcc_1)$. In particular,
$\dif_1(\mcc_1):x_n^{m}=\dif_1(\mcc_1)$, for all $m\geq 1$.

By Remark~\ref{Iind1C}, $I(L)\subseteq \dif_1(\mcc_1)\subseteq
I(\mcl)$.  By \cite[Proposition~5.7]{opv}, $I(\mcl)=I(L):x_n^m$, for
some $m\gg 0$. Therefore
\begin{eqnarray*}
I(\mcl)=I(L):x_n^m\subseteq
\dif_1(\mcc_1):x_n^m=\dif_1(\mcc_1)\subseteq I(\mcl),
\end{eqnarray*}
so $I(\mcl)=\dif_1(\mcc_1)$ as desired.
\end{proof}

Putting together Propositions~\ref{GB1} and \ref{colonideal}, we get
the following result.

\begin{corollary}\label{GBI(L)}
The set $\dif_1(\mcb_1)\subset\mcc_0$ is a Gr\"obner basis of
$\dif_1(\mcc_1)=I(\mcl)=\ker(\dif_0)$.
\end{corollary}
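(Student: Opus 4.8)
The plan is simply to splice together the two immediately preceding results. First I would invoke Proposition~\ref{GB1}, which tells us that the finite set $\dif_1(\mcb_1)=\{f_C\mid C\subseteq[n-1],\,C\neq\varnothing\}$ is a Gr\"obner basis, with respect to the $\wrlo$ of Assumption~\ref{wrlo}, of the ideal $\dif_1(\mcc_1)$ it generates. Then I would invoke Proposition~\ref{colonideal}, which identifies that ideal as $\dif_1(\mcc_1)=I(\mcl)=\ker(\dif_0)$. Since being a Gr\"obner basis is a property of a finite generating set relative to the ideal it generates, and that ideal is unchanged under the identifications just recalled, the conclusion is immediate: $\dif_1(\mcb_1)$ is a Gr\"obner basis of $\dif_1(\mcc_1)=I(\mcl)=\ker(\dif_0)$.

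I do not anticipate any obstacle, since all the substance lies in the two cited propositions: Proposition~\ref{GB1} rests on the explicit $S$-polynomial computation of Lemma~\ref{s-poly} combined with Buchberger's criterion, while Proposition~\ref{colonideal} rests on the saturation argument exploiting that $x_n$ divides no leading term $\lt(f_{0,i})$ together with \cite[Proposition~5.7]{opv}. The one point I would make explicit in passing is that the compatibility of these two ingredients hinges on the vertex enumeration fixed in Assumption~\ref{echelon-omega}: it is precisely Lemma~\ref{lm} (that $\lt(f_C)=m_C$, a monomial never involving $x_n$ because $n\in\overline{C}$) that feeds both the Gr\"obner basis computation and the colon-ideal argument. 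With that observed, the corollary follows by merely quoting Propositions~\ref{GB1} and \ref{colonideal}.
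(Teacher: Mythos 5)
Your proposal is correct and is exactly the paper's argument: the corollary is stated there with the single line "Putting together Propositions~\ref{GB1} and \ref{colonideal}, we get the following result," i.e., the Gr\"obner basis property from Proposition~\ref{GB1} combined with the identification $\dif_1(\mcc_1)=I(\mcl)=\ker(\dif_0)$ from Proposition~\ref{colonideal}. Your added remark about the role of Assumption~\ref{echelon-omega} and Lemma~\ref{lm} is accurate but not needed beyond the two citations.
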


\subsection{Minimality in the strongly complete case}
$\phantom{+}$\bigskip

\noindent Let $L$ be an $\icb$ matrix or, equivalently, let $\mbg$ be
a strongly connected digraph.

\begin{proposition}\label{minGB}
Suppose that $\mbg$ is strongly complete. Then $\dif_1(\mcb_1)$ is a
minimal Gr\"obner basis and a minimal homogeneous system of generators
of $I(\mcl)$.
\end{proposition}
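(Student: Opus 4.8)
The plan is to combine two facts already available --- that $\dif_1(\mcb_1)=\{f_C\mid C\subseteq[n-1],\ C\neq\varnothing\}$ is a Gr\"obner basis of $I(\mcl)=\dif_1(\mcc_1)$ (Corollary~\ref{GBI(L)}) and that its elements are pairwise distinct (Lemma~\ref{fc=fd}) --- with the extra rigidity coming from strong completeness: by Section~\ref{dictionary} this means that $L$ is a $\pcb$ matrix, so $a_{i,j}>0$ for all $i\neq j$. The key preliminary remark is that under this positivity the leading monomial $m_C=\lm(f_C)=x^{C\to\overline C}$ of Lemma~\ref{lm} has support exactly $C$: for each $i\in C$ the exponent $\sum_{j\in\overline C}a_{i,j}$ is positive because $n\in\overline C$. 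Symmetrically, $m_{\overline C}=x^{\overline C\to C}$ has support exactly $\overline C$, which contains $n$.

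For the first assertion it remains, since $\lc(f_C)=1$ by Lemma~\ref{lm}, to check that $\lm(f_C)\nmid\lm(f_D)$ whenever $C\neq D$. If $m_C\mid m_D$ then $C=\supp(m_C)\subseteq\supp(m_D)=D$, hence $C\subsetneq D$; but then for any $i\in C$ the $x_i$-exponent of $m_C$ exceeds that of $m_D$ by $\sum_{j\in D\setminus C}a_{i,j}>0$, a contradiction. So $\dif_1(\mcb_1)$ is a minimal Gr\"obner basis.

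For the second assertion, since $\dif_1(\mcb_1)$ generates the homogeneous ideal $I(\mcl)$, the graded Nakayama lemma reduces minimality to showing that the classes $\overline{f_C}$ are $\mbk$-linearly independent in $I(\mcl)/\mfm I(\mcl)$, i.e.\ that $\sum_C\lambda_Cf_C\in\mfm I(\mcl)$ with $\lambda_C\in\mbk$ forces all $\lambda_C=0$. Decomposing into homogeneous components, one may assume every $C$ occurring has a fixed degree $d=\deg f_C$. Suppose some $\lambda_C\neq0$ and choose $C^{\ast}$ with $\lambda_{C^{\ast}}\neq0$ and $m_{C^{\ast}}$ maximal, for the $\wrlo$, among the $m_C$ with $\lambda_C\neq0$. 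Since $m_C=\lm(f_C)>m_{\overline C}$ for every $C$ and the $m_C$ are pairwise distinct, the monomial $m_{C^{\ast}}$ occurs in $\sum_C\lambda_Cf_C$ with coefficient $\lambda_{C^{\ast}}\neq0$. On the other hand, $\mfm I(\mcl)=\sum_D\mfm f_D$, so any element of $\mfm I(\mcl)$ is $\sum_D k_Df_D$ with $k_D\in\mfm$; expanding, each of its monomials --- in particular $m_{C^{\ast}}$ --- equals $\mu\,m_D$ or $\mu\,m_{\overline D}$ for some $D$ with $\deg f_D<d$ and some monomial $\mu$ of positive degree. The first case forces $m_D\mid m_{C^{\ast}}$ with $D\subsetneq C^{\ast}$, which is impossible by the argument of the previous paragraph; the second is impossible because $n\in\supp(m_{\overline D})$ whereas $n\notin C^{\ast}=\supp(m_{C^{\ast}})$. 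This contradiction yields $\lambda_C=0$ for all $C$, completing the proof.

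The Gr\"obner-basis half is routine once the support identity $\supp(m_C)=C$ is in place. The delicate point is the generating-set half: a minimal (indeed reduced) Gr\"obner basis of homogeneous elements need not be a minimal system of generators in general, so one must genuinely use the binomial shape $f_C=m_C-m_{\overline C}$ together with the asymmetry ``$n\in\overline C$, $n\notin C$'' to exclude the possibility $m_{C^{\ast}}=\mu\,m_{\overline D}$; the only bookkeeping to watch is the reduction to homogeneous components and the description $\mfm I(\mcl)=\sum_D\mfm f_D$ of the monomials that can occur.
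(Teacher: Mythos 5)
Your proof is correct, and while the Gr\"obner-minimality half is essentially the paper's own argument (divisibility of leading monomials forces a strict inclusion of the index sets, and positivity of all $a_{i,j}$ then gives a contradiction on exponents), the generating-set half takes a genuinely different route. The paper starts from an arbitrary relation $u_1f_{0,1}+\cdots+u_{r_1}f_{0,r_1}=0$, writes $u_i=v_i+x_nw_i$ and sets $x_n=0$; strong completeness makes every trailing monomial $m_{\overline{C}}$ divisible by $x_n$, so the relation collapses to a syzygy $(v_1,\ldots,v_{r_1})$ on the leading terms, and the paper then invokes the description of the syzygy module of monomials as generated by the $S_{i,j}$, together with the explicit formulas for $m_{D,C}$ and $m_{C,D}$ from Lemma~\ref{s-poly}, to conclude that all $v_i$, hence all $u_i$, lie in $\mfm$; graded Nakayama finishes. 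You instead prove directly that the classes of the $f_C$ are $\mbk$-linearly independent in $I(\mcl)/\mfm I(\mcl)$: the $\wrlo$-maximal monomial $m_{C^{\ast}}$ survives with coefficient $\lambda_{C^{\ast}}$, while any monomial of an element of $\mfm I(\mcl)=\sum_D\mfm f_D$ is a proper multiple of some $m_D$ (excluded by your divisibility argument, via $\supp(m_C)=C$) or of some $m_{\overline{D}}$ (excluded because $x_n$ divides $m_{\overline{D}}$ but $n\notin C^{\ast}$). Your route is more elementary and self-contained --- no specialization at $x_n=0$ and no appeal to the structure of the syzygy module of the leading terms --- and in fact the homogeneous-component reduction and even the maximality of $m_{C^{\ast}}$ are dispensable, since distinctness of the $m_C$ plus the $x_n$-dichotomy already isolate the coefficient $\lambda_{C^{\ast}}$. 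The paper's route buys a little more: it shows that every first syzygy of $\dif_1(\mcb_1)$ has all coefficients in $\mfm$, which is the formulation that meshes with the $S$-polynomial computations reused later and with the minimality criterion for the whole resolution (Remark~\ref{dif-m}, Corollary~\ref{min-resolution}). One small remark: your citation of Lemma~\ref{fc=fd} only gives distinctness of the $f_C$, not of the $m_C$; but under strong completeness the distinctness of the $m_C$ that your argument actually uses is immediate from the support identity $\supp(m_C)=C$, so nothing is missing.
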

\begin{proof}
Recall that $\dif_1(\mcb_1)=\{f_{0,1},\ldots ,f_{0,r_1}\}=\{f_C\mid
C\subseteq [n-1], C\neq\varnothing \}$. By Proposition~\ref{GBI(L)},
$\dif_1(\mcb_1)$ is a Gr\"obner basis of $I(\mcl)$. Let us see that it
is a minimal such basis (see, e.g.,
\cite[\S~2.7. Definition~4]{clo}). Fix $f_C$, $C\subseteq [n-1]$,
$C\neq\varnothing$. Clearly, the leading coefficient of $f_C$ is
$1$. Moreover, the leading term of $f_C$ is
$\lt(f_C)=m_C=x^{C\to\overline{C}}$ (see Remark~\ref{fC} and
Lemma~\ref{lm}). Suppose that $\lt(f_C)$ lies in the ideal
$\langle\lt(f_D)\mid D\subseteq [n-1], D\neq C, D\neq
\varnothing\rangle$. Since the latter is a monomial ideal, it follows
that $\lt(f_D)$ divides $\lt(f_C)$, for some $D\subseteq [n-1]$,
$D\neq C$, $D\neq\varnothing$. Now, use that $G$ is strongly complete,
or equivalently, $L$ is a $\pcb$ matrix. Therefore, $a_{i,j}>0$, for
all $i,j=1,\ldots ,n$. Since 
\begin{eqnarray*}
x^{D\to\overline{D}}=\prod_{i\in
  D}x_i^{\sum_{j\in\overline{D}}a_{i,j}}\mbox{ divides }
x^{C\to\overline{C}}=\prod_{i\in
  C}x_i^{\sum_{j\in\overline{C}}a_{i,j}} 
\end{eqnarray*}
and $\sum_{j\in\overline{D}}a_{i,j}>0$, it follows that $D\subsetneq
C$, and so $\overline{C}$ is a proper subset of
$\overline{D}$. However, if $i\in D$, then the exponent of $x_i$ in
$m_D$ is $\sum_{j\in\overline{D}}a_{i,j}$, whereas the exponent of
$x_i$ in $m_C$ is $\sum_{j\in\overline{C}}a_{i,j}$, which is strictly
smaller, because $\overline{C}\subsetneq \overline{D}$, a
contradiction.

By Corollary~\ref{GBI(L)}, $\dif_1(\mcb_1)$ is a homogeneous system of
generators for $\dif_1(\mcc_1)=I(\mcl)$. Let us see that it is a
minimal one.  Suppose that
\begin{eqnarray}\label{linearcomb}
u_1f_{0,1}+\cdots +u_{r_1}f_{0,r_1}=0,
\end{eqnarray}
for some polynomials $u_i\in \mba=\mbk[x]$. In other words,
$(u_1,\ldots ,u_{r_1})\in\mba^{r_1}$ is in the kernel of
$\varphi:\mba^{r_1}\to \dif_1(\mcc_1)$, the free $\mba$-linear
presentation of $\dif_1(\mcc_1)$, sending each element of the
canonical basis $e_i$ to $f_{0,i}$. Write each $u_i$ in the form
$u_i=v_i+x_nw_i$, $v_i,w_i\in\mbk[x]$, where $v_i$ contains no term
involving $x_n$. Set $x_n=0$ in the equality \eqref{linearcomb}
above. Then we get
\begin{eqnarray*}
v_1\lt(f_{0,1})+\cdots + v_{r_1}\lt(f_{0,r_1})=0,
\end{eqnarray*}
so $(v_1,\ldots ,v_{r_1})$ is a syzygy on the leading terms
$\lt(f_{0,1}),\ldots ,\lt(f_{0,r_1})$.  By
\cite[\S~2.9. Proposition~8, see also Definition~5]{clo}, $(v_1,\ldots
,v_{r_1})$ is of the form $\sum_{i<j}v_{i,j}S_{i,j}$, where
\begin{eqnarray*}
S_{i,j}=\frac{\Lcm(\lm(f_{0,_j}),\lm(f_{0,i}))}{\lt(f_{0,i})}e_i-
\frac{\Lcm(\lm(f_{0,i}),\lm(f_{0,j}))}{\lt(f_{0,j})}e_j.
\end{eqnarray*}
We have $f_{0,i}=f_C$ and $f_{0,j}=f_D$, for some $C,D\subseteq
[n-1]$, $C,D\neq \varnothing$ and $C\neq D$. Then
\begin{eqnarray*}
&&\frac{\Lcm(\lm(f_{0,_j}),\lm(f_{0,i}))}{\lt(f_{0,i})}=
\frac{\Lcm(\lm(f_D),\lm(f_C))}{\lt(f_C)}=m_{D,C}\mbox{ and }
\\&& \frac{\Lcm(\lm(f_{0,_i}),\lm(f_{0,j}))}{\lt(f_{0,j})}=
\frac{\Lcm(\lm(f_C),\lm(f_D))}{\lt(f_D)}=m_{C,D},
\end{eqnarray*}
with the notations as in the equality \eqref{mdc}. Writing $E=C\cap
D$, $F=C\setminus E$, $G=D\setminus E$, $U=C\cup D$ and
$V=\overline{U}=[n]\setminus U$ and using the equalities
\eqref{lcmCD/C} and \eqref{lcmCD/D} in the proof of
Lemma~\ref{s-poly}, then
\begin{eqnarray*}
m_{D,C}=x^{(E\to F,G)^{+}}x^{G\to F}x^{G\to V}\mbox{ and }
m_{C,D}=x^{(E\to G,F)^{+}}x^{F\to G}x^{F\to V}.
\end{eqnarray*}
Suppose that $C\not\subset D$ and $D\not\subset C$. Then $F$ and $G$
are non-empty. Since $a_{i,j}>0$, for all $i,j=1,\ldots ,n$, we deduce
that $x^{G\to F}$ and $x^{F\to G}$, and so $m_{D,C}$ and $m_{C,D}$,
are in the irrelevant maximal ideal $\mfm$. Suppose, without loss of
generality, that $C\subsetneq D$. Then $m_{D,C}=x^{G\to V}$ and
$m_{C,D}=x^{C\to G}$, where $G$, $V$ and $C$ are non-empty (see the
last paragraph in the proof of Lemma~\ref{s-poly}). Again, we deduce
that $m_{D,C}$ and $m_{C,D}$ are in $\mfm$. Hence each $u_i$ also lies
in $\mfm$ and $(u_1,\ldots ,u_{r_1})\in\mfm\mba^{r_1}$. On tensoring
the short exact sequence
\begin{eqnarray*}
0\to\ker(\varphi)\to\mba^{r_1}\buildrel{\varphi}\over{\to}\dif_1(\mcc_1)\to
0 
\end{eqnarray*}
by $\mba/\mfm=k$, we deduce that the induced map
$\overline{\varphi}:\mba^{r_1}/\mfm\mba^{r_1}
\to\dif_1(\mcc_1)/\mfm\dif_1(\mcc_1)$ is an isomorphism of $k$-vector
spaces. Applying the graded Nakayama Lemma, we deduce that
$\dif_1(\mcb_1)$ is a minimal homogeneous system of generators for
$\dif_1(\mcc_1)$ (see, e.g., \cite[Exercise~1.5.24]{bh}).
\end{proof}

\begin{remark}
The minimality for the strongly complete case will also follow from
Corollary~\ref{min-resolution}, since
$\ker(\dif_1)=\dif_2(\mcc_2)\subset\mfm\mcc_1$ and so
$\mcc_1/\mfm\mcc_1\cong\dif_1(\mcc_1)/\mfm\dif_1(\mcc_1)$.
\end{remark}

\subsection{Non-minimality in the non-strongly complete case}
$\phantom{+}$\bigskip

\noindent Let $L$ be an $\icb$ matrix or, equivalently, let $\mbg$ be
a strongly connected digraph. It is easy to give an example where, if
$\mbg$ is not strongly complete, then $\dif_1(\mcb_1)$ is not a
minimal system of generators of $I(\mcl)$.

\begin{example}
Consider the following $\icb$ matrix $L$, which is not a $\pcb$
matrix:
\begin{eqnarray*}
\newcommand\y{\cellcolor{gray!40}}
L=\left(\begin{array}{rrrr}
1&0&0&-1\\\y-1&1&0&0\\0&\y-1&1&0\\0&0&\y-1&1
\end{array}\right), \mbox{ corresponding to }\mbg_{L}:
\vcenter{\xymatrix{ & \bullet_2 \ar[dl]_1 & \\ \bullet_1 \ar[dr]_1 & &
    \bullet_3 \ar[ul]_1\\ & \bullet_4 \ar[ur]_1}}.
\end{eqnarray*}
Then $\dif_1(\mcb_1)=\{x-t,y-t,xz-yt,x-z,z-t,y-z,x-y\}$, whereas
$I(L)=(x-y,y-z,z-t,-x+t)$, which is clearly minimally generated by
$\{x-y,y-z,z-t\}$. Thus $I(L)$ is a complete intersection ideal. In
particular, it is unmixed and $I(L)=I(\mcl)$. Therefore
$\dif_1(\mcb_1)$ cannot be a minimal homogeneous system of generators
of $I(\mcl)$.
\end{example}

\section{Computing syzygies of a Gr\"obner basis}

\subsection{Preliminary notations}
$\phantom{+}$\bigskip

\noindent Let us recall some preliminary notations. We follow
\cite[Section~2]{emss}, \cite{bs} and \cite[\S~15]{eisenbud}.

\begin{definitions}\label{firstsetofdefinitions}
Let $F=\mba^r$ be the free $\mba=\mbk[x]$-module of rank $r$, and let
$e_1,\ldots ,e_r$ be the canonical basis of $F$.
\begin{itemize}
\item[$(1)$] \underline{\sc Monomials}. A monomial $me_i$ in $F$ is
  the product of a monomial $m$ in $\mba=\mbk[x]$ with a basis element
  $e_i$. A term $\lambda me_i$ in $F$ is the product of a monomial
  $me_i$ in $F$ with a scalar $\lambda$ in $\mbk$.
\item[$(2)$] \underline{\sc Divisibility}. A monomial $m_1e_i$ divides
  a monomial $m_2e_j$ if $i=j$ and $m_1$ divides $m_2$; in this case,
  $m_2e_j/m_1e_i$ is defined as $m_2/m_1$. Similarly, a monomial $m_1$
  divides $m_2e_j$ if $m_1$ divides $m_2$ and in this case
  $m_2e_j/m_1$ is defined as $(m_2/m_1)e_j$. The least common multiple
  of two monomials $m_1e_i$ and $m_2e_j$ of $F$ is
  $\Lcm(m_1e_i,m_2e_j)=\Lcm(m_1,m_2)e_i$, if $i=j$, or $0$ otherwise.
\item[$(3)$] \underline{\sc Orderings}. A (global) monomial ordering
  on $F$ is a total ordering $>$ on the set of monomials of $F$ such
  that if $m_1e_i$ and $m_2e_j$ are monomials in $F$ and $m$ is a
  monomial in $\mba$, then $m_1e_i> m_2e_j\Rightarrow (m\cdot m_1)e_i>
  (m\cdot m_2)e_j$. Moreover, $me_i>e_i$ for all $i$ and all monomials
  $m\neq 1$, and we require in addition that
  $m_1e_i>m_2e_i\Leftrightarrow m_1e_j>m_2e_j$, for all $i,j$.
\item[$(4)$] \underline{\sc Leading terms}. Let $>$ be a monomial
  ordering on $F$ and $f=\lambda me_i+\mbox{ lower order terms}$,
  $f\in F\setminus\{0\}$, where $\lambda\neq 0$. Then the leading
  coefficient, the leading monomial and the leading term of $f$ are
  $\lc(f)=\lambda$, $\lm(f)=me_i$ and $\lt(f)=\lambda me_i$,
  respectively. For any subset $S\subset F$, the leading module of $S$
  is defined as $\lmod(S):=\langle\lm(f)\mid f\in
  S\setminus\{0\}\rangle$.
\end{itemize}
\end{definitions}

\begin{definitions}\label{secondsetofdefinitions}
Let $F_0=\mba^s$ be the free $\mba$-module of rank $s$ and let
$G_0=\{f_1,\ldots ,f_r\}$ be a finite subset of
$F_0\setminus\{0\}$. Take $F_1=\mba^r$, the free $\mba$-module of rank $r$,
the cardinality of $G_0$, and let $\varphi_1:F_1\rightarrow F_0$ be
defined by $\varphi_1(e_i)=f_i$.
\begin{itemize}
\item[$(5)$] \underline{\sc Syzygies}. The (first) syzygy module of
  $G_0$ is $\syz(G_0):=\ker(\varphi_1)$. An element of $\ker(\varphi_1)$ is
  called a syzygy of $G_0$.
\item[$(6)$] \underline{\sc Induced orderings}. Given a monomial
  ordering $>$ on $F_0$, the induced ordering on $F_1$ (w.r.t. $>$ and
  $G_0$) is the monomial ordering $>$ defined by
  $m_1e_i>m_2e_j\Leftrightarrow \lm(m_1f_i)>\lm(m_2f_j)$, or
  $\lm(m_1f_i)=\lm(m_2f_j)$ and $i>j$.
\item[$(7)$] \underline{\sc The $S$-vectors}. For $i,j\in\{ 1,\ldots
  ,r\}$, the $S$-vector of $f_i$ and $f_j$ is defined as
\begin{eqnarray*}
S(f_i,f_j)=m^1_{j,i}f_i-m^1_{i,j}f_j\in \langle G_0 \rangle\subset
F_0,\mbox{ where } m^1_{j,i}:=\frac{\Lcm(\lm(f_j),\lm(f_i))}{\lt(f_i)}.
\end{eqnarray*}
In particular,
$\lt(m^1_{j,i}f_i)=m^1_{j,i}\lt(f_i)=\Lcm(\lm(f_j),\lm(f_i))$, which,
by symmetry, will be equal to $\lt(m^1_{i,j}f_j)$. Therefore
$\lm(m^1_{j,i}f_i)=\lm(m^1_{i,j}f_j)>\lm(S(f_i,f_j))$.
\item[$(8)$] \underline{\sc Standard expression}. Let $g\in F_0$. An
  equality $g=g_1f_1+\cdots +g_rf_r+h$, with $g_i\in \mba$ and $h\in
  F_0$, is a standard expression for $g$ with remainder $h$ (and
  w.r.t. $>$ and $G_0$) if the following two conditions are satisfied:
\begin{itemize}
\item[$(a)$] $\lm(g)\geq \lm(g_if_i)$, for all $i=1,\ldots ,r$,
  whenever both $g$ and $g_if_i$ are nonzero.
\item[$(b)$] If $h$ is nonzero, then $\lt(h)$ is not divisible by any
  $\lt(f_i)$.
\end{itemize}
\end{itemize}
\end{definitions}

\begin{discussion}\label{disc}
Let $F_0=\mba^s$ be the free $\mba$-module of rank $s$ and let
$G_0=\{f_1,\ldots ,f_r\}$ be a finite subset of
$F_0\setminus\{0\}$. Take $F_1=\mba^r$, the free $\mba$-module of rank
$r$, the cardinality of $G_0$, and let $\varphi_1:F_1\rightarrow F_0$
be defined by $\varphi_1(e_i)=f_i$. Suppose that $G_0$ is a Gr\"obner
basis w.r.t. some ordering $>$. By the Buchberger Criterion (see,
e.g., \cite[Theorem~15.8]{eisenbud}), there are standard
expressions with remainder zero:
\begin{eqnarray}\label{standardexp}
S(f_i,f_j)=m^1_{j,i}f_i-m^1_{i,j}f_j=g_1^{(i,j)}f_1+\cdots
+g_r^{(i,j)}f_r.
\end{eqnarray}
In particular, $\lm(m^1_{j,i}f_i)=\lm(m^1_{i,j}f_j)>\lm(S(f_i,f_j))
\geq \lm(g_s^{(i,j)}f_s)$, for all $s=1,\ldots, r$, whenever both
$S(f_i,f_j)$ and the particular $g_s^{(i,j)}f_s$ are nonzero.

Each such expression \eqref{standardexp} defines a syzygy of $G_0$ in
$F_1$, associated to the $S$-vector of $f_i$ and $f_j$:
\begin{eqnarray}\label{syzygy}
m^1_{j,i}e_i-m^1_{i,j}e_j-(g_1^{(i,j)}e_1+\cdots
+g_r^{(i,j)}e_r).
\end{eqnarray}
This element will be called a $\tau$-syzygy associated to the
$S$-vector of $f_i$ and $f_j$. Note that, unless the standard
expression \eqref{standardexp} is a ``determinate division with
reminder'', the elements $g_s^{(i,j)}$ are not necessarily uniquely
determined (see \cite[Theorem~1.3]{bs}, \cite[Theorem~2.2.12]{ds}).

However, if $i>j$, since $\lm(m^1_{j,i}f_i)=\lm(m^1_{i,j}f_j)$, then
$m^1_{j,i}e_i>m^1_{i,j}e_j$.  Moreover, since
$\lm(m^1_{i,j}f_j)>\lm(S(f_i,f_j))\geq \lm(g_s^{(i,j)}f_s)$, then
$m^1_{i,j}e_j>\lm(g_s^{(i,j)})e_s$, for all $s=1,\ldots ,r$, whenever
the particular $g_s^{(i,j)}f_s$ is nonzero.

Thus, if $\sigma\in\ker(\varphi_1)$ is a $\tau$-syzygy associated to
the $S$-vector of two elements of $G_0$, obtained as described above,
setting $e_i$ (respectively, $e_j$) to be the unique basis element
involved in $\lt(\sigma)/\lc(\sigma)$ (respectively, 
$\lt(\sigma-\lt(\sigma))/\lc(\sigma-\lt(\sigma))$), one deduces
that $\sigma$ is a $\tau$-syzygy associated to the $S$-vector of
$f_i=\varphi_1(e_i)$ and $f_j=\varphi_1(e_j)$, with $i>j$. In
particular, $\lt(\sigma)=m^1_{j,i}e_i$ (see \cite[p.6]{bs},
\cite[p.68]{ds} or \cite[Remark~3.6]{emss}).

A $\tau$-syzygy associated to the $S$-vector of $f_i$ and $f_j$ will
be denoted by $\tau_1(e_i,e_j)$. However, this notation can be
somewhat misleading, in the following sense. Suppose that
$f_{i},f_{j},f_{p},f_{q}$ are four elements of $G_0$, with
$i,j,p,q\in\{1,\ldots ,r\}$. The equality $(i,j)=(p,q)$ does not
ensure $\tau_1(e_i,e_j)$ is equal to $\tau_1(e_p,e_q)$, because of the
non-uniqueness in the choice of the elements $g_s^{(i,j)}$ (unless
``determinate division with remainder'' is used, as mentioned
above). Nevertheless, the equality $\tau_1(e_i,e_j)=\tau_1(e_p,e_q)$
does ensure that $e_i=e_p$ and that $e_j=e_q$, as seen above.

For ease of reference, we highlight the property:
\begin{eqnarray}\label{inducedlm}
\lt(\tau_1(e_i,e_j))=m^1_{j,i}e_i.
\end{eqnarray}
\end{discussion}

\subsection{The Schreyer algorithm to compute
the syzgygies of a Gr\"obner basis}\label{schreyer-alg}
$\phantom{+}$\bigskip

\noindent We recall the Schreyer algorithm to compute the syzygies of
a Gr\"obner basis (see \cite[Theorem~3.2 and Proposition~3.5]{emss},
\cite[Corollary~2.3.19]{ds}, \cite[Corollary~1.11]{bs},
\cite[Theorem~15.10, Corollary~15.11]{eisenbud}).  We will refer to
\cite{emss} particularly for easy quoting. First we introduce some more
notations.

\begin{notation}\label{module-quotients}
Let $F_0=\mba^s$ be the free $\mba$-module of rank $s$ and let
$G_0=\{f_1,\ldots ,f_r\}$ be a finite subset of
$F_0\setminus\{0\}$. Recall that
$m^1_{j,i}=\Lcm(\lm(f_j),\lm(f_i))/\lt(f_i)$.  Let
$\{M_i(G_0)\}_{1\leq i\leq r}$ be the set of ideals of $\mba=\mbk[x]$
defined as follows: $M_1(G_0)=0$, and for each $i=2,\ldots,r$,
\begin{eqnarray*}
M_i(G_0) & = & (\lt(f_1),\ldots,\lt(f_{i-1})):\lt(f_i)\\ & = &
\langle\lt(f_1)\rangle:\lt(f_i)+\cdots+\langle\lt(f_{i-1})\rangle:\lt(f_i).
\end{eqnarray*}
Note that, for each $j=1,\ldots, i-1$,
\begin{eqnarray*}
\langle \lt(f_j)\rangle: \lt(f_i)& = & \langle
  \lm(f_j)\rangle:\lm(f_i)\\ & = & \langle
  \frac{\Lcm(\lm(f_j),\lm(f_i))}{\lm(f_i)}\rangle= \langle
  \frac{\Lcm(\lm(f_j),\lm(f_i))}{\lt(f_i)}\rangle=(m^1_{j,i}).
\end{eqnarray*}
Therefore, $M_i(G_0)=(m^1_{1,i},\ldots , m^1_{i-1,i})$. Observe that,
in fact, the $M_i(G_0)$ are monomial ideals.
\end{notation}

\begin{theorem}\label{algorithm} {\rm (\cite[Proposition~3.5]{emss})}
Let $F_0=\mba^s$ be the free $\mba$-module of rank $s$ and let
$G_0=\{f_1,\ldots ,f_r\}$ be a finite subset of
$F_0\setminus\{0\}$. Suppose that $G_0$ is a Gr\"obner basis of
$\langle G_0\rangle$ w.r.t. some monomial ordering $>$ on $F_0$. Let
$F_1=\mba^r$ be the free $\mba$-module of rank $r$, the cardinality of
$G_0$, and with canonical basis $e_1,\ldots ,e_r$. Let
$\varphi_1:F_1\rightarrow F_0$ be defined by $\varphi_1(e_i)=f_i$. For
each $i=2,\ldots ,r$, and for each minimal generator $x^{\alpha}$ of
$M_i(G_0)$, choose exactly one index $j=j(i,\alpha)$, with $1\leq
j<i$, such that $m^1_{j,i}$ divides $x^\alpha$, and compute a standard
expression for $S(f_i,f_j)$ with remainder zero:
\begin{eqnarray*}
S(f_i,f_j)=m^1_{j,i}f_i-m^1_{i,j}f_j=g_1^{(i,j)}f_1+\cdots
+g_r^{(i,j)}f_r.
\end{eqnarray*}
Let $\tau_1(e_i,e_j)=m^1_{j,i}e_i-m^1_{i,j}e_j-(g_1^{(i,j)}e_1+\cdots
+g_r^{(i,j)}e_r)\in \ker(\varphi_1)$ be the corresponding
$\tau$-syzygy associated to the $S$-vector of $f_i$ and $f_j$. Then
\begin{eqnarray*}
G_1:=\bigcup_{i=2}^{r}\{\tau_1(e_i,e_j)\mid x^\alpha\mbox{ a minimal
  generator of }M_i(G_0)\mbox{ and }j=j(i,\alpha)\mbox{ with }
m^1_{j,i}\vert x^\alpha\}
\end{eqnarray*}
is a Gr\"obner basis of $\ker(\varphi_1)\subset F_1$ w.r.t. the
monomial ordering on $F_1$ induced by $>$ and $G_0$. In particular,
$\ker(\varphi_1)=\langle G_1\rangle$.
\end{theorem}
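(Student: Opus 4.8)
The plan is to run the classical proof of Schreyer's theorem, organised around computing the leading module $\lmod(\ker(\varphi_1))$ with respect to the ordering on $F_1$ induced by $>$ and $G_0$. First I would record the two basic facts about the members of $G_1$. Since $G_0$ is a Gr\"obner basis, the Buchberger Criterion (as used in Discussion~\ref{disc}) supplies, for each pair $(i,j)$ that is used, a standard expression $S(f_i,f_j)=g_1^{(i,j)}f_1+\cdots+g_r^{(i,j)}f_r$ with remainder zero, so $\tau_1(e_i,e_j)\in\ker(\varphi_1)$ by construction, and by \eqref{inducedlm} its leading term in the induced ordering is $\lt(\tau_1(e_i,e_j))=m^1_{j,i}e_i$. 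Moreover, for each $i$ the index $j=j(i,\alpha)$ is chosen so that $m^1_{j,i}\mid x^\alpha$, while $m^1_{j,i}$ is itself one of the generators $m^1_{1,i},\ldots,m^1_{i-1,i}$ of the monomial ideal $M_i(G_0)$; since $x^\alpha$ is a \emph{minimal} generator of $M_i(G_0)$, this forces $m^1_{j,i}=x^\alpha$. Hence the set of leading terms occurring among the elements of $G_1$ is exactly $\{x^\alpha e_i\mid 2\le i\le r,\ x^\alpha\text{ a minimal generator of }M_i(G_0)\}$, and this set generates the submodule $\bigoplus_{i=2}^{r}M_i(G_0)e_i$ of $F_1$.

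The main work is the reverse inclusion $\lmod(\ker(\varphi_1))\subseteq\bigoplus_{i}M_i(G_0)e_i$. Take a nonzero syzygy $\sigma=\sum_s a_s e_s\in\ker(\varphi_1)$. Applying $\varphi_1$ gives $\sum_s a_s f_s=0$ in $F_0$, so the largest monomial $D$ occurring among $\lm(a_s f_s)=\lm(a_s)\lm(f_s)$, over the indices $s$ with $a_s\neq0$, is attained for at least two such indices (otherwise the coefficient of $D$ in $\sum_s a_s f_s$ would be nonzero, since $\mbk$ is a field). Let $T$ be the set of these indices and put $i=\max T$, so $i\ge 2$. By the tie-breaking rule of the induced ordering (a larger basis index wins when the values $\lm(a_s f_s)$ agree), one has $\lt(\sigma)=\lc(a_i)\lm(a_i)e_i$. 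Choosing any $j\in T$ with $j<i$, the equality $\lm(a_i)\lm(f_i)=D=\lm(a_j)\lm(f_j)$ shows $\lm(f_j)\mid\lm(a_i)\lm(f_i)$, hence $\lm(a_i)\in\langle\lm(f_j)\rangle:\lm(f_i)=(m^1_{j,i})\subseteq M_i(G_0)$, so $\lm(\sigma)=\lm(a_i)e_i\in M_i(G_0)e_i$. Combining this with the previous paragraph, $\lmod(\ker(\varphi_1))=\bigoplus_{i}M_i(G_0)e_i$, and this module is generated by $\{\lm(\tau)\mid\tau\in G_1\}$; since $G_1\subseteq\ker(\varphi_1)$, this is precisely the assertion that $G_1$ is a Gr\"obner basis of $\ker(\varphi_1)$. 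The equality $\ker(\varphi_1)=\langle G_1\rangle$ then follows from the usual division-with-remainder argument, which terminates because $>$ is a global monomial ordering, hence a well-ordering on the monomials of $F_1$.

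I expect the main obstacle to be the bookkeeping in the second paragraph: making precise that the maximum of the $\lm(a_s f_s)$ is attained at least twice, and that the secondary ``larger index'' criterion of the induced ordering selects $i=\max T$ as the index carrying $\lt(\sigma)$. One also has to be careful that the $a_s$ with $s\in T$ are nonzero when invoking divisibility of leading monomials, and to note that $m^1_{j,i}\in M_i(G_0)$ for every $j<i$ (immediate from $M_i(G_0)=(m^1_{1,i},\ldots,m^1_{i-1,i})$), so that a minimal generator of $M_i(G_0)$ divisible by some $m^1_{j,i}$ must equal it. The remaining ingredients --- that each $\tau_1(e_i,e_j)$ is a syzygy, the identification of its leading term through \eqref{inducedlm}, and the final reduction step --- are routine.
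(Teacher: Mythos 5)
Your proof is correct, but note that the paper itself offers no argument for Theorem~\ref{algorithm}: it is quoted verbatim from \cite[Proposition~3.5]{emss} (the refined Schreyer theorem), so there is nothing internal to compare against. What you have written is essentially the classical Schreyer argument (as in \cite[Theorem~15.10]{eisenbud} and the proof in \cite{emss}): the elements $\tau_1(e_i,e_j)$ lie in $\ker(\varphi_1)$ with $\lt(\tau_1(e_i,e_j))=m^1_{j,i}e_i$ by \eqref{inducedlm}; for a nonzero syzygy $\sigma=\sum_s a_se_s$ the maximal value $D$ of $\lm(a_sf_s)$ is attained at least twice, the induced ordering's tie-break places $\lt(\sigma)$ on the largest such index $i$, and the divisibility $\lm(f_j)\mid\lm(a_i)\lm(f_i)$ puts $\lm(a_i)$ in $(m^1_{j,i})\subseteq M_i(G_0)$; hence $\lmod(\ker(\varphi_1))=\bigoplus_i M_i(G_0)e_i=\lmod(G_1)$, and the restriction to minimal generators of the monomial ideals $M_i(G_0)$ is handled correctly, since those alone generate $M_i(G_0)$.

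Two small points of hygiene, neither a gap. First, $m^1_{j,i}=\Lcm(\lm(f_j),\lm(f_i))/\lt(f_i)$ is a term rather than a monomial, so the conclusion from minimality is $m^1_{j,i}=x^\alpha$ only up to a nonzero scalar (this is exactly the content of Remark~\ref{uptosign}); this does not affect the generation of the leading module. Second, in identifying $\lt(\sigma)$ you implicitly use that, within a fixed component $e_s$, the induced ordering compares $me_s$ and $m'e_s$ as $m\lm(f_s)$ versus $m'\lm(f_s)$, i.e.\ as $m$ versus $m'$; this is the routine compatibility of monomial module orderings that you flag, and it is fine.
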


\begin{remark}\label{uptosign} 
Fix $i\in \{2,\ldots r\}$. As remarked above,
$M_i(G_0)=(m^1_{j,i},\mid 1\leq j<i)$. By definition, $x^{\alpha}$ is
a minimal generator of $M_i(G_0)$. Hence if we choose an index
$j=j(i,\alpha)$ with $1\leq j<i$ such that the generator $m^1_{j,i}$
of $M_i(G_0)$ divides $x^{\alpha}$, then necessarily $m^1_{j,i}$
equals $x^\alpha$ up to a nonzero scalar. In the situation where the
$m^1_{j,i}$ are monomials up to $\pm$, which will be the case in our
applications of Theorem~\ref{algorithm}, then $m^1_{j,i}=\pm
x^\alpha$.
\end{remark}

\begin{remark}\label{nonminimal}
Suppose that we have a $\tau$-syzygy $\tau_1(e_i,e_j)$ such that
$m^1_{j,i}$ is a non-minimal generator of $M_i(G_0)$ and let $G_1$ be
as in Theorem~\ref{algorithm}. Then 
\begin{eqnarray*}
\tilde{G}_1=G_1\cup \{\tau_1(e_i,e_j)\}
\end{eqnarray*}
is still a Gr\"obner basis of
$\ker(\varphi_1)$. This follows from the definition of Gr\"obner
basis. Indeed, since $G_1$ is a Gr\"obner basis, then
$\lmod(G_1)=\lmod(\ker(\varphi_1))$. Since
$\tau_1(e_i,e_j)\in\ker(\varphi_1)$, then
$\lt(\tau_1(e_i,e_j))\in\lmod(\ker(\varphi_1))$ and
$\lmod(\tilde{G}_1)$ is still equal to $\lmod(\ker(\varphi_1))$.
\end{remark}

\begin{remark}\label{induced-srle}
We will consider in $G_1$ the following enumeration: let
$\tau_1(e_p,e_q)$ and $\tau_1(e_i,e_j)$ be two elements of $G_1$, with
$i,j,p,q\in\{1,\ldots ,r\}$, such that $(p,q)\neq (i,j)$. That is,
$\tau_1(e_p,e_q)\neq\tau_1(e_i,e_j)$ (cf. Discussion~\ref{disc}).  We
will use the notation $\tau_1(e_p,e_q)\prec \tau_1(e_i,e_j)$ and say
that ``$\tau_1(e_p,e_q)$ precedes $\tau_1(e_i,e_j)$'', or
alternatively ``$\tau_1(e_i,e_j)$ succeeds $\tau_1(e_p,e_q)$'',
whenever $p<i$, or whenever $p=i$ and $q<j$.
\end{remark}

\section{Exactness of the Cyc complex}

\subsection{Statement of the main result}
$\phantom{+}$\bigskip

The main result of the paper is the following.

\begin{theorem}\label{main}
Let $\mbg$ be a strongly connected digraph of $n$ vertices or,
equivalently, let $L$ be an $n\times n$ $\icb$ matrix. Let
$\mcc_{\mathbb{G}}$ be the associated $\cyc$ complex. Then
\begin{eqnarray*}
\mcc_{\mathbb{G}}:\; 0\leftarrow \mbk[x]/I(\mcl)\leftarrow
\mcc_0=\mbk[x]\xleftarrow{\sdif_1} \mcc_1=\mbk[x]^{r_1}
\xleftarrow{\sdif_2} \cdots 
\xleftarrow{\sdif_{n-1}}\mcc_{n-1}=\mbk[x]^{r_{n-1}}\xleftarrow{} 0
\end{eqnarray*}
is a free resolution of $\mbk[x]/I(\mcl)$.
\end{theorem}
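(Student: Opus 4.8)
The plan is to show that the complex $\mcc_{\mathbb{G}}$ is acyclic, i.e., that $H_i(\mcc_{\mathbb{G}})=0$ for all $i\geq 1$, since we already know from Proposition~\ref{chain-complex} that it is a complex and from Corollary~\ref{GBI(L)} (together with Proposition~\ref{colonideal}) that it is exact in homological degree $0$, namely $\dif_1(\mcc_1)=I(\mcl)=\ker(\dif_0)$. Since by Corollary~\ref{projdim} we have $\pd_{\mbk[x]}(\mbk[x]/I(\mcl))=n-1$ and the complex has length $n-1$, a natural and economical route is the Buchsbaum--Eisenbud acyclicity criterion: it suffices to check that $\depth I_{r_i}(\dif_i)\geq i$ for each $i=1,\ldots,n-1$, where $r_i=\rank(\dif_i)$ and $I_{r_i}(\dif_i)$ is the ideal of $r_i\times r_i$ minors. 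However, controlling these Fitting ideals directly seems hard, so instead I would follow the inductive Schreyer-syzygy strategy announced in the introduction.

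The core idea is to prove, by induction on $k$, that $\dif_{k+1}(\mcb_{k+1})$ is, up to a choice of standard expressions, precisely a Schreyer-type Gr\"obner basis of $\ker(\dif_k)$ with respect to the ordering induced (through the $\srle$ enumeration and the $\wrlo$) by the previous stage. The base case $k=1$ is Corollary~\ref{GBI(L)}: $\dif_1(\mcb_1)$ is a Gr\"obner basis of $\dif_1(\mcc_1)=\ker(\dif_0)$, and Lemma~\ref{lm} pins down the leading terms $\lt(f_C)=m_C=x^{C\to\overline{C}}$. For the inductive step, assume $G_{k-1}:=\dif_k(\mcb_k)$ is a Gr\"obner basis of $\ker(\dif_{k-1})$ with known leading terms; one then computes, for each $i$, the monomial ideal $M_i(G_{k-1})=(m^1_{j,i}\mid j<i)$ of Notation~\ref{module-quotients}, identifies its minimal generators, and checks that for each such minimal generator the corresponding $\tau$-syzygy $\tau_1(e_i,e_j)$ can be chosen to coincide — up to sign and up to the harmless non-uniqueness in the $g_s^{(i,j)}$ — with one of the Manjunath--Sturmfels boundaries $\dif_{k+1}(e_{k+1,\ell})$ given by formula~\eqref{differential}. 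The delicate bookkeeping is that the last term $-x^{I_{k+1}\to I_1}(I_2,\ldots,I_{k},I_1\cup I_{k+1})$ of $\dif_{k+1}$ is distinguished (Remark~\ref{dif-and-enum}: it is the unique term whose coefficient involves $x_n$), so in the induced ordering it plays the role of the ``tail'' $g_s^{(i,j)}e_s$ of the Schreyer syzygy, while the leading term of $\dif_{k+1}(e_{k+1,\ell})$ — the underlined terms in Example~\ref{cyc4} — matches $m^1_{j,i}e_i$ as in \eqref{inducedlm}. Here one uses the block-echelon/$(\omega,\delta)$ normalisation (Assumption~\ref{echelon-omega}) and Remark~\ref{rightmost} to guarantee the leading terms come out as claimed. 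One must also verify the count: the number of minimal generators of the $M_i(G_{k-1})$, summed over $i$, equals $r_{k+1}=\lvert\cyc_{n,k+2}\rvert$, so that Theorem~\ref{algorithm} produces exactly the basis $\mcb_{k+1}$ (and Remark~\ref{nonminimal} absorbs any extra non-minimal generators). Granting all this, Theorem~\ref{algorithm} yields that $\dif_{k+1}(\mcb_{k+1})$ generates $\ker(\dif_k)$, i.e., $\dif_{k+1}(\mcc_{k+1})=\ker(\dif_k)$, which is exactness at $\mcc_k$; iterating up to $k=n-2$ gives exactness everywhere, and $\dif_{n-1}$ is injective because $\ker(\dif_{n-1})=\dif_n(\mcc_n)=0$.

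The main obstacle is the inductive identification of the Manjunath--Sturmfels differential $\dif_{k+1}$ with the Schreyer syzygies: one has to verify simultaneously (i) that the leading term of each $\dif_{k+1}(e_{k+1,\ell})$ in the induced monomial ordering is $m^1_{j,i}e_{k,i}$ for the right pair $(i,j)$ — this requires carefully tracking which term of \eqref{differential} is leading, using that merging blocks corresponds to combining the monomials $x^{I_s\to I_{s+1}}$ via \eqref{disjoint}, and that the $\lcm$ relations \eqref{lcm-equality} reproduce exactly the combinatorics of $M_i(G_{k-1})$; (ii) that the remaining (non-leading) terms of $\dif_{k+1}(e_{k+1,\ell})$ constitute a valid standard expression with remainder zero for the corresponding $S$-vector — i.e. condition $(a)$ of Definition~\ref{secondsetofdefinitions}$(8)$ holds, which again reduces to an inequality among monomials of the form $x^{I\to J}$; and (iii) the enumeration of $\mcb_{k+1}$ via $\srle$ matches the enumeration of $\{\tau_1(e_i,e_j)\}$ via Remark~\ref{induced-srle}. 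The bookkeeping for (i)--(iii) is exactly what Example~\ref{cyc4} illustrates in the case $n=4$ (the underlined leading terms visibly following the pattern), and the general argument is an induction on $k$ keeping track of leading terms; this is the technical heart of the paper and where essentially all the work lies.
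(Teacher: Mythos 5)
Your strategy is exactly the paper's: a simultaneous induction showing that $G_k=\dif_{k+1}(\mcb_{k+1})$ is a Schreyer-type Gr\"obner basis of $\ker(\dif_k)$ together with a formula for its leading terms, with base case Corollary~\ref{GBI(L)} and Lemma~\ref{lm}. As written, though, the proposal has concrete gaps. First, your treatment of the top of the complex is circular: you justify injectivity of $\dif_{n-1}$ by writing $\ker(\dif_{n-1})=\dif_n(\mcc_n)=0$, which presupposes exactness at $\mcc_{n-1}$, the very thing to be proved. The actual argument (Remark~\ref{exactnessn-1}) is one more turn of the Schreyer crank: by the leading-term formula in $\mbfh_{n-2}$, the leading terms of the elements of $\dif_{n-1}(\mcb_{n-1})$ sit on pairwise distinct basis elements of $\mcc_{n-2}$, so all the associated module quotients vanish and Theorem~\ref{algorithm} yields $\ker(\dif_{n-1})=0$.

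Second, your counting claim --- that the minimal generators of the $M_i(G_{k-1})$, summed over $i$, number exactly $r_{k+1}$ --- fails for general $\icb$ matrices; it is essentially a strongly complete phenomenon (cf.\ Remark~\ref{min-m}), since in the non-complete case several of the $m^k_{j,i}$ with $e_{k,j}\in\mcb_{k,i}$ may coincide, equal $1$, or divide one another. What the paper proves instead (Lemma~\ref{superfluous}, Remark~\ref{bki}, Proposition~\ref{gbdesiredform}) is that $\{m^k_{j,i}\mid e_{k,j}\in\mcb_{k,i}\}$ is a possibly redundant generating set of $M_i(G_{k-1})$, that the corresponding redundant $\tau$-syzygies may be retained by Remark~\ref{nonminimal}, and that this family is indexed by a set in bijection (via $\varrho_{k,i}$) with $\mcb_{k+1}$; that bijection, not minimality, is what produces the cardinality $r_{k+1}$ and allows the identification with $-\dif_{k+1}(\mcb_{k+1})$. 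Finally, the decisive step --- that $\dif_{k+1}(I_1,\ldots,I_{k+2})=-\tau_k(e_{k,i},e_{k,j})$, i.e.\ that the non-leading terms of the boundary form a standard expression with remainder zero for the corresponding $S$-vector and that the leading-term formula propagates (Proposition~\ref{tau} and Corollary~\ref{ltbij}) --- is only announced in your items (i)--(iii) and never carried out; since essentially all the work of the theorem lies there, what you have is a correct roadmap of the published proof rather than a proof.
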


\begin{remark}
Recall that we assume that $\mbg$ is a finite, weighted, directed
graph, without loops, sources or sinks (cf.
Assumption~\ref{digraph}). In particular, its Laplacian matrix $L$ is
a $\cb$ matrix and, since $\mbg$ is strongly connected, then $L$ is an
$\icb$ matrix (see Subsection~\ref{dictionary}). By
Proposition~\ref{chain-complex}, $\mcc_{\mathbb{G}}$ is a chain
complex of free $\mba$-modules. Moreover, we suppose in $\mba=\mbk[x]$
the $\mbn$-grading given by $\nu(L)$ (cf. Notation~\ref{vectornu} and
Assumption~\ref{grading}). Furthermore, we assume that $\mbg$ has a
$(\omega,\delta)$-enumeration or, equivalently, that $L$ is in
$\delta$-block echelon form (see \ref{echelon-omega}). Finally, we
suppose that $\mcc_0=\mbk[x]$ is endowed with the $\wrlo$
(Assumption~\ref{wrlo}).

Recall also that $\dif_0:\mcc_0=\mbk[x]\to \mbk[x]/I(\mcl)$ is defined
to be the natural projection onto the quotient ring (see
Definition~\ref{dif0}). Observe that we may interpret $\mcc_n=0$ and
$\dif_n=0$.
\end{remark}

Let us define a monomial ordering on each module $\mcc_{k}$ of the
$\cyc$ complex.

\begin{remark}\label{mon-ordering}
Recall that the elements of the enumerated free basis $\mcb_k$ of the
free module $\mcc_k$ are denoted by $e_{k,1},\ldots ,e_{k,r_k}$, where
$r_k=\rank(\mcc_k)=\lvert\cyc_{n,k+1}\rvert$. Their images under
$\dif_k$ are denoted by $f_{k-1,1},\ldots ,f_{k-1,r_k}$, with
$f_{k-1,j}:=\dif_k(e_{k,j})$ (Definition~\ref{cyccomplex}). Let
\begin{eqnarray*}
G_{k-1}:=\dif_k(\mcb_k)= \{\dif_{k}(e_{k,1}),\ldots
,\dif_{k}(e_{k,r_k})\}= \{f_{k-1,1},\ldots
,f_{k-1,r_k}\}\subset\mcc_{k-1}.
\end{eqnarray*}
By Assumption~\ref{wrlo}, we endow $\mcc_0=\mbk[x]$ with the
$\wrlo$. Using Definitions~\ref{secondsetofdefinitions}, where
$G_0=\dif_1(\mcb_1)$, with $\mcb_1$ a basis of $\mcc_1$ and
$\dif_1:\mcc_1\to\mcc_0$, we endow $\mcc_1$ with the monomial ordering
induced by the one in $\mcc_0$ and the subset $G_0$. Taking into
account that $G_{k-1}=\dif_k(\mcb_{k})$, with $\mcb_{k}$ a basis of
$\mcc_k$ and $\dif_k:\mcc_k\to\mcc_{k-1}$, we can proceed recursively,
and endow each $\mcc_k$ with the monomial ordering induced by the one
in $\mcc_{k-1}$ and the subset $G_{k-1}$.
\end{remark}

\begin{notation}\label{m-basis-elements}
With the notations as above in Remark~\ref{mon-ordering}, set
\begin{eqnarray*}
m^k_{j,i}:=\frac{\Lcm(\lm(\dif_{k}(e_{k,j})),\lm(\dif_k(e_{k,i})))}
{\lt(\dif_k(e_{k,i}))}=
\frac{\Lcm(\lm(f_{k-1,j}),\lm(f_{k-1,i}))}{\lt(f_{k-1,i})}.
\end{eqnarray*}
Similarly to Notation~\ref{module-quotients}, $M_i(G_{k-1})$ are the
monomial ideals of $\mba=\mbk[x]$ defined as follows:
$M_1(G_{k-1})=0$, and for each $i=2,\ldots,r_{k-1}$,
\begin{eqnarray*}
&& M_i(G_{k-1})=(\lt(f_{k-1,1}),\ldots,\lt(f_{k-1,i-1})):\lt(f_{k-1,i})=
  \\ && \langle\lt(f_{k-1,1})\rangle:\lt(f_{k-1,i})+\cdots+
  \langle\lt(f_{k-1,i-1})\rangle:\lt(f_{k-1,i})= (m^k_{1,i},\ldots
  ,m^k_{i-1,i}).
\end{eqnarray*}
Note that, if $\lm(f_{k-1,j})$ and $\lm(f_{k-1,i})$ are two monomials
of $\mcc_{k-1}$ which are multiple of different basis elements, then
$\langle\lt(f_{k-1,j})\rangle:\lt(f_{k-1,i})=0$. In such a case, we
understand $m_{j,i}^k=0$ (see Definition~\ref{firstsetofdefinitions})
\end{notation}
 
\begin{notation}\label{hyphk} 
We will prove Theorem~\ref{main} by induction on the degree of the
component of the complex. We specify now the statements to be shown.

\underline{Statement $\mbfh_0$}:
\begin{itemize}
\item[$(a)$] $G_{0}=\dif_{1}(\mcb_{1})$ is a Gr\"obner basis of
  $\ker(\dif_{0})\subset\mcc_{0}$.
\item[$(b)$] $\lt(\dif_{1}(C,\overline{C}))=
  (-1)^0x^{C\to\overline{C}}(C\cup\overline{C})$, for each
  $(C,\overline{C})\in\mcb_{1}$.
\end{itemize}

\underline{Statement $\mbfh_k$}, for $1\leq k\leq n-2$ (recall that
$n\geq 3$):
\begin{itemize}
\item[$(a)$] $G_{k}=\dif_{k+1}(\mcb_{k+1})$ is a Gr\"obner basis of
  $\ker(\dif_{k})\subset\mcc_{k}$ w.r.t. the monomial ordering defined
  as in Remark~\ref{mon-ordering}.
\item[$(b)$] $\lt(\dif_{k+1}(I_1,\ldots
  ,I_{k+2}))=(-1)^{k}x^{I_{k+1}\to I_{k+2}}(I_1,\ldots ,I_{k+1}\cup
  I_{k+2})$, for each $(I_1,\ldots ,I_{k+2})$ in $\mcb_{k+1}$.
\end{itemize}

\underline{Statement $\mbfh_{n-1}$}:
$\dif_{n-1}:\mcc_{n-1}\to\dif_{n-2}$ is injective.
\end{notation}

\begin{remark}\label{degree0}
We already know that $\mbfh_0$ holds. Indeed, by
Corollary~\ref{GBI(L)}, $G_0=\dif_1(\mcb_1)$ is a Gr\"obner basis of
${\rm Im}(\dif_1)=I(\mcl)=\ker(\dif_0)$. Moreover, by Lemma~\ref{lm},
\begin{eqnarray*}
\lt(\dif_1(C,\overline{C}))=\lt(f_C)=m_C=x^{C\to\overline{C}}=
(-1)^0x^{C\to\overline{C}}(C\cup\overline{C}),
\end{eqnarray*}
where, according to Definition~\ref{cyccomplex}, $(C\cup
\overline{C})\equiv ([n])$ is identified with the unit element of
$\mcc_0=\mbk[x]$.
\end{remark}

\begin{remark}\label{exactnessn-1}
If $\mbfh_{n-2}$ holds, then $\mbfh_{n-1}$ holds. Indeed, for each
$i=2,\ldots ,r_{n-1}$, the basis element $e_{n-1,i}=(I_1,\ldots
,I_n)\in\mcb_{n-1}$ is a cyclically ordered partition of $[n]$ into
$n$ blocks, with $n\in I_n$. Thus, $(I_1,\ldots ,I_{n})
=(\{i_1\},\ldots ,\{i_{n-1}\},\{n\})$, where $\{i_1,\ldots
,i_{n-1},n\}=[n]$. Any different basis element
$e_{n-1,j}\in\mcb_{n-1}$, with $1\leq j<i$, will be of the same form:
$e_{n-1,j}=(\{j_1\},\ldots ,\{j_{n-1}\},\{n\})\in\mcb_{n-1}$. By part
$(b)$ of $\mbfh_{n-2}$,
\begin{eqnarray*}
\lt(f_{n-1,i})= \lt(\dif_{n-1}(e_{n-1,i}))=
  (-1)^{n-2}x^{I_{n-1}\to\{n\}}(\{i_1\},\ldots
  ,\{i_{n-2}\},\{i_{n-1},n\})
\end{eqnarray*}
and, similarly,
\begin{eqnarray*}
\lt(f_{n-1,j})=\lt(\dif_{n-1}(e_{n-1,j}))=
(-1)^{n-2}x^{J_{n-1}\to\{n\}}(\{j_1\},\ldots
,\{j_{n-2}\},\{j_{n-1},n\}).
\end{eqnarray*}
One sees that, since $e_{n-1,i}$ and $e_{n-1,j}$ are different, then
$\lt(f_{n-1,i})$ and $\lt(f_{n-1},j)$ are multiple of different basis
elements. Thus
$\langle\lt(f_{n-1,j})\rangle:\lt(f_{n-1,i})=0$ and
$M_i(G_{n-1})=0$, for all $i=1,\ldots ,r_{n-1}$ (see
Notation~\ref{m-basis-elements}). By Theorem~\ref{algorithm}, we
deduce that $\ker(\dif_{n-1})=0$.
\end{remark}

\begin{remark}\label{exactness1ton-2}
If $\mbfh_k$ holds, for every $k=0,\ldots ,n-1$, then
$\mcc_{\mathbb{G}}$ is an exact complex. Indeed, by
Remark~\ref{degree0} above, ${\rm Im}(\dif_1)=\ker(\dif_0)$. For
$1\leq k\leq n-2$, by part $(a)$ of $\mbfh_k$,
\begin{eqnarray*}
{\rm Im}(\dif_{k+1})=\langle
\dif_{k+1}(\mcb_{k+1})\rangle=\langle \mbox{Gr\"obner basis of
}\ker(\dif_k)\rangle =\ker(\dif_{k}).
\end{eqnarray*}
Finally, by definition, $\dif_{n}=0$, so ${\rm Im}(\dif_{n})=0$. On
the other hand, $\dif_{n-1}$ is injective by $\mbfh_{n-1}$.
\end{remark}

\begin{purpose}\label{reduction}
In the light of Remarks~\ref{degree0}, \ref{exactnessn-1} and
\ref{exactness1ton-2}, the proof of the main theorem is reduced to
show ``$\mbfh_{k-1}\Rightarrow \mbfh_{k}$, for $k=1,\ldots ,n-2$''.
\end{purpose}

Before that, we need two important results. These are presented in the next
two sections.

\subsection{Identifying superfluous monomial generators of the 
module quotients}
$\phantom{+}$\bigskip

\noindent Given a basis element $e_{k,i}=(I_1,\ldots
,I_k,I_{k+1})\in\mcb_k$, we want to identify superfluous elements
$m^k_{j,i}$ in $M_{i}(G_{k-1})$, where $1\leq j<i\leq r_k$ (recall
Notation~\ref{m-basis-elements} and Remark~\ref{uptosign}).

\begin{lemma}\label{superfluous}
Fix an integer $k$, with $1\leq k\leq n-2$. Suppose that $\mbfh_{k-1}$
holds. Fix a basis element $e_{k,i}=(I_1,\ldots ,I_k,I_{k+1})$ in
$\mcb_k$. Let $e_{k,j}=(J_1,\ldots ,J_k,J_{k+1})\in\mcb_k$, for some
$1\leq j<i\leq r_k$.
\begin{itemize}
\item[$(a)$] If for some $1\leq s\leq k-1$, $J_s\neq I_s$, then
  $m^k_{j,i}=0$. 
\end{itemize}
Suppose, on the contrary, that $e_{k,j}=(I_1,\ldots
,I_{k-1},J_{k},J_{k+1})$, so that $J_k\cup J_{k+1}=I_{k}\cup I_{k+1}$.
\begin{itemize}
\item[$(b)$] Then, 
\begin{eqnarray*}
m^k_{j,i} =(-1)^{k-1}x^{(J_k\cap I_k\to J_{k+1},I_{k+1})^+}x^{J_k\cap
  I_{k+1}\to J_{k+1}}.
\end{eqnarray*}
 In particular, if $J_k\supsetneq I_k$ (and so $J_{k+1}\subsetneq
 I_{k+1}$), then
\begin{eqnarray}\label{mji}
m^k_{j,i}=(-1)^{k-1}x^{J_{k}\cap I_{k+1}\to J_{k+1}}.
\end{eqnarray}
\item[$(c)$] If $J_k\not\supset I_k$, let $1\leq l<j$ be such that
  $e_{k,l}=(I_1,\ldots ,I_{k-1},J_{k}\cup I_k,J_{k+1}\cap
  I_{k+1})$. Then $m^k_{l,i}\mid m^k_{j,i}$, so $m^k_{j,i}$ is
  superfluous.
\end{itemize}
\end{lemma}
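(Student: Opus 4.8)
The plan is to handle the three parts in turn, using throughout the leading-term description supplied by $\mbfh_{k-1}(b)$: for a basis element $(I_1,\ldots,I_{k+1})\in\mcb_k$ one has $\lt(\dif_k(I_1,\ldots,I_{k+1}))=(-1)^{k-1}x^{I_k\to I_{k+1}}(I_1,\ldots,I_k\cup I_{k+1})$, and likewise with $J$'s in place of $I$'s for $f_{k-1,j}=\dif_k(J_1,\ldots,J_{k+1})$. Since $n\in I_{k+1}\cap J_{k+1}$, both $(I_1,\ldots,I_{k-1},I_k\cup I_{k+1})$ and $(J_1,\ldots,J_{k-1},J_k\cup J_{k+1})$ are already in the normalized form indexing the basis of $\mcc_{k-1}$, so they agree as basis elements exactly when the written tuples agree. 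For part $(a)$: if $J_s\neq I_s$ for some $s\leq k-1$, then $\lm(f_{k-1,j})$ and $\lm(f_{k-1,i})$ are multiples of two \emph{distinct} basis elements of $\mcc_{k-1}$ (differing in the $s$-th block), so their $\Lcm$ is $0$ by Definitions~\ref{firstsetofdefinitions}$(2)$, whence $m^k_{j,i}=0$ by Notation~\ref{m-basis-elements}.

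For part $(b)$ we are in the case $J_1=I_1,\ldots,J_{k-1}=I_{k-1}$, so $\lm(f_{k-1,i})$ and $\lm(f_{k-1,j})$ are multiples of the \emph{same} basis element $\bar e:=(I_1,\ldots,I_{k-1},I_k\cup I_{k+1})$, with monomial parts $x^{I_k\to I_{k+1}}$ and $x^{J_k\to J_{k+1}}$; since $\lc(f_{k-1,i})=(-1)^{k-1}$ this yields
\begin{eqnarray*}
m^k_{j,i}=(-1)^{k-1}\,\frac{\Lcm(x^{J_k\to J_{k+1}},x^{I_k\to I_{k+1}})}{x^{I_k\to I_{k+1}}}.
\end{eqnarray*}
To evaluate the right-hand side I would pass to the common refinement $P=I_k\cap J_k$, $Q=I_k\cap J_{k+1}$, $R=I_{k+1}\cap J_k$, $S=I_{k+1}\cap J_{k+1}$ (so $I_k=P\sqcup Q$, $I_{k+1}=R\sqcup S$, $J_k=P\sqcup R$, $J_{k+1}=Q\sqcup S$), expand the two monomials by \eqref{disjoint}, compute the $\Lcm$ one variable-block at a time (the $S$-indices do not occur; the $Q$- and $R$-indices occur in only one factor; the $P$-indices contribute $x^{P\to S}\Lcm(x^{P\to R},x^{P\to Q})$), and cancel $x^{I_k\to I_{k+1}}$. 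The quotient collapses to $x^{(P\to Q,R)^+}x^{R\to Q}x^{R\to S}=x^{(P\to Q,R)^+}x^{R\to J_{k+1}}$, and since $\sum_{j\in Q}a_{i,j}-\sum_{j\in R}a_{i,j}=\sum_{j\in J_{k+1}}a_{i,j}-\sum_{j\in I_{k+1}}a_{i,j}$ (the $S=J_{k+1}\cap I_{k+1}$ part cancels), one identifies $x^{(P\to Q,R)^+}=x^{(J_k\cap I_k\to J_{k+1},I_{k+1})^+}$ and $R=J_k\cap I_{k+1}$, giving the stated formula. For the ``in particular'' clause, $J_k\supsetneq I_k$ forces $P=I_k$ and, for $i\in I_k$, $\sum_{j\in J_{k+1}}a_{i,j}\leq\sum_{j\in I_{k+1}}a_{i,j}$ because $J_{k+1}\subsetneq I_{k+1}$, so the $x^{(\cdot)^+}$ factor is $1$ and \eqref{mji} follows.

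For part $(c)$: since $j<i$ and the first $k-1$ blocks agree, $J_k$ precedes $I_k$ in the $\srle$, so $|J_k|\geq|I_k|$; together with $J_k\not\supseteq I_k$ this forces $J_k\not\subseteq I_k$, hence $J_k\cup I_k\supsetneq J_k$ and $J_k\cup I_k\supsetneq I_k$. First I would confirm that $e_{k,l}:=(I_1,\ldots,I_{k-1},J_k\cup I_k,J_{k+1}\cap I_{k+1})$ is a genuine element of $\mcb_k$: its blocks are non-empty (the last contains $n$) and disjoint with union $(J_k\cup I_k)\cup(J_{k+1}\cap I_{k+1})=(P\cup Q\cup R)\cup S=I_k\cup I_{k+1}$; moreover $|J_k\cup I_k|>|J_k|$ gives $J_k\cup I_k\prec J_k$, i.e. $l<j$, and $J_k\cup I_k\neq I_k$ gives $e_{k,l}\neq e_{k,i}$. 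Since the first $k-1$ blocks of $e_{k,l}$ are $I_1,\ldots,I_{k-1}$, part $(b)$ applies with $e_{k,j}$ replaced by $e_{k,l}$; as its $k$-th block $J_k\cup I_k$ contains $I_k$, the ``$(\cdot)\cap I_k$'' term is $I_k$, its last block $J_{k+1}\cap I_{k+1}$ is contained in $I_{k+1}$ so the $x^{(I_k\to J_{k+1}\cap I_{k+1},I_{k+1})^+}$ factor is $1$, and $(J_k\cup I_k)\cap I_{k+1}=R$, so $m^k_{l,i}=(-1)^{k-1}x^{R\to S}$. Comparing with $m^k_{j,i}=(-1)^{k-1}x^{(P\to J_{k+1},I_{k+1})^+}x^{R\to J_{k+1}}$ and writing $x^{R\to J_{k+1}}=x^{R\to Q}x^{R\to S}$ by \eqref{disjoint} shows $m^k_{l,i}\mid m^k_{j,i}$; since $1\leq l<j<i$, the generator $m^k_{j,i}$ of $M_i(G_{k-1})$ lies in $(m^k_{l,i})$ and is superfluous.

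Parts $(a)$ and $(b)$ are essentially bookkeeping once the leading terms are available from $\mbfh_{k-1}(b)$. I expect the crux to be part $(c)$: guessing the auxiliary partition $e_{k,l}$ (obtained from $e_{k,j}$ by absorbing $I_k$ into its $k$-th block and intersecting its last block with $I_{k+1}$), verifying it is a legitimate basis element strictly preceding $e_{k,j}$, and recognizing that this absorption is precisely what makes the $x^{(\cdot)^+}$ factor of $m^k_{l,i}$ trivial, leaving the monomial $x^{R\to S}$ which visibly divides the $x^{R\to J_{k+1}}$ occurring in $m^k_{j,i}$.
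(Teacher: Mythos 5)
Your proposal is correct and follows essentially the same route as the paper's proof: part $(a)$ via the distinct basis elements in the leading terms supplied by $\mbfh_{k-1}(b)$, part $(b)$ via the same $\Lcm$ computation (your $P,Q,R,S$ refinement is just a more explicit bookkeeping of the paper's use of \eqref{disjoint} and \eqref{lcm-equality}), and part $(c)$ via the same auxiliary element $e_{k,l}$ and the divisibility $x^{J_k\cap I_{k+1}\to J_{k+1}\cap I_{k+1}}\mid x^{J_k\cap I_{k+1}\to J_{k+1}}$. Your additional checks that $e_{k,l}$ is a genuine basis element with $l<j$ and that $J_k\not\subseteq I_k$ are welcome details the paper leaves implicit in the statement.
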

\begin{proof} 
Note that $(a)$ is vacuous if $k=1$. So, suppose momentarily, that
$k\geq 2$ and that $J_s\neq I_s$, for some $1\leq s\leq k-1$.  By the
hypothesis that $\mbfh_{k-1}$ holds, we have that
\begin{eqnarray}\label{lm-j-i}
\lt(\dif_k(e_{k,j}))=(-1)^{k-1}x^{J_k\to J_{k+1}}(J_1,\ldots
,J_{k-1},J_k\cup J_{k+1})
\end{eqnarray}
and
\begin{eqnarray*}
\lt(\dif_k(e_{k,i}))=(-1)^{k-1}x^{I_k\to I_{k+1}}(I_1,\ldots ,I_{k-1},
,I_k\cup I_{k+1}),
\end{eqnarray*}
where $(J_1,\ldots ,J_k\cup J_{k+1})\neq (I_1,\ldots ,I_k\cup
I_{k+1})$. Then $(a)$ follows from the fact that the least common
multiple of two monomial terms is zero provided the two basis elements
involved are different (see Definition~\ref{firstsetofdefinitions},
$(2)$).

Suppose that $e_{k,j}=(I_1,\ldots ,I_{k-1},J_{k},J_{k+1})$, with
$J_k\cup J_{k+1}=I_{k}\cup I_{k+1}$. Note that this means that
$(J_k,J_{k+1})$ and $(I_k,I_{k+1})$ are each partitions of the same
set $J_k\cup J_{k+1}=I_{k}\cup I_{k+1}$. Now the two basis elements
involved in $\lm(\dif_{k}(e_{k,j}))$ and $\lm(\dif_k(e_{k,i}))$ are
the same (see \eqref{lm-j-i} above). Therefore, using
Definition~\ref{firstsetofdefinitions}, $(2)$, again:
\begin{eqnarray*}
m^k_{j,i} & = & \frac{\Lcm(x^{J_k\to J_{k+1}},x^{I_k\to
      I_{k+1}})(I_1,\ldots ,I_{k-1},I_{k}\cup
    I_{k+1})}{(-1)^{k-1}x^{I_k\to I_{k+1}}(I_1,\ldots
    ,I_{k-1},I_{k}\cup I_{k+1})}\\ & = & \frac{\Lcm(x^{J_k\to
      J_{k+1}},x^{I_k\to I_{k+1}})}{(-1)^{k-1}x^{I_k\to I_{k+1}}}.
\end{eqnarray*}
Now, using \eqref{disjoint}, we see that
\begin{eqnarray*}
&&x^{J_k\to J_{k+1}}=x^{J_k\cap I_{k}\to J_{k+1}}x^{J_{k}\cap I_{k+1}\to
  J_{k+1}}\mbox{ and }\\&&x^{I_k\to I_{k+1}}=x^{J_k\cap I_k\to
  I_{k+1}}x^{J_{k+1}\cap I_k\to I_{k+1}}.
\end{eqnarray*}
Thus
\begin{eqnarray*}
&&\Lcm(x^{J_k\to J_{k+1}},x^{I_k\to I_{k+1}})\\&& =\Lcm(x^{J_k\cap
    I_{k}\to J_{k+1}}x^{J_{k}\cap I_{k+1}\to J_{k+1}},x^{J_k\cap
    I_k\to I_{k+1}}x^{J_{k+1}\cap I_k\to I_{k+1}})\\ &&
 =\Lcm(x^{J_k\cap I_{k}\to J_{k+1}},x^{J_k\cap I_k\to I_{k+1}})
  x^{J_{k}\cap I_{k+1}\to J_{k+1}}x^{J_{k+1}\cap I_k\to I_{k+1}}.
\end{eqnarray*}
Recalling \eqref{lcm-equality} in Notation~\ref{lcm-notation}, we
deduce:
\begin{eqnarray*}
\frac{\Lcm(x^{J_k\to J_{k+1}},x^{I_k\to I_{k+1}})}{(-1)^{k-1}x^{I_k\to
    I_{k+1}}}= (-1)^{k-1}x^{(J_k\cap I_k\to J_{k+1},I_{k+1})^+}
x^{J_k\cap I_{k+1}\to J_{k+1}}.
\end{eqnarray*}
In particular, if $J_k\supsetneq I_k$, then $J_{k+1}\subsetneq
I_{k+1}$ and $x^{(J_k\cap I_k\to J_{k+1},I_{k+1})^+}=1$. Therefore
\begin{eqnarray*}
m^k_{j,i}=(-1)^{k-1}x^{J_k\cap I_{k+1}\to J_{k+1}}.
\end{eqnarray*}
This proves $(b)$. Suppose now that $J_k\not\supset I_k$, so
$J_{k+1}\not\subset I_{k+1}$ and $J_{k+1}\cap I_{k+1}\subsetneq
J_{k+1}$. Let $1\leq l<j<i$ be such that $e_{k,l}=(I_1,\ldots
,I_{k-1},J_k\cup I_k,J_{k+1}\cap I_{k+1})$. Using $(b)$, we have that:
\begin{eqnarray*}
&& m^k_{l,i}=(-1)^{k-1}x^{J_k\cap I_{k+1}\to J_{k+1}\cap I_{k+1}}\mbox{
  and }\\ && m^k_{j,i}=(-1)^{k-1}x^{(J_k\cap I_k\to J_{k+1},I_{k+1})^+}
x^{J_k\cap I_{k+1}\to J_{k+1}}.
\end{eqnarray*}
In particular, $m^k_{l,i}$ divides $m^k_{j,i}$, which proves $(c)$.
\end{proof}

The following is an easy remark but will prove to be very useful in
organizing the basis elements of $\mcb_k$.

\begin{remark}\label{bki}
Fix an integer $k$, with $1\leq k\leq n-2$. Let $e_{k,i}=(I_1,\ldots
,I_k,I_{k+1})$ be a basis element of $\mcb_k$, where $1\leq i\leq
r_{k}$. Set
\begin{eqnarray*}
\mcb_{k,i}:=\{(I_1,\ldots ,I_{k-1},J_k,J_{k+1})\in\mcb_k\mid
J_k\supsetneq I_k\}.
\end{eqnarray*}
The following conditions hold.
\begin{itemize}
\item[$(a)$] The set $\mcb_{k,i}$ is empty if and only if
  $I_{k+1}=\{n\}$. (For instance, for the first basis element
\begin{eqnarray*}  
e_{k,1}=(\{k,\ldots ,n-1\},\{k-1\},\ldots ,\{2\},\{1\},\{n\})
\end{eqnarray*} 
of $\mcb_k$, considering the $\srle$, we have $\mcb_{k,1}=\varnothing$.)
\item[$(b)$] The cardinality of $\mcb_{k,i}$ is $\lvert
  \mcb_{k,i}\rvert=2^{\lvert I_{k+1}\rvert -1}-1$. (This formula holds
  trivially if $I_{k+1}=\{n\}$.)
\item[$(c)$] If $\mcb_{k,i}\neq \varnothing$ and $e_{k,j}=(I_1,\ldots
  ,I_{k-1},J_k,J_{k+1})$ in $\mcb_{k,i}$, then $1\leq j<i$. Moreover
  $\mcb_{k,j}\subsetneq\mcb_{k,i}$.
\item[$(d)$] If $\mcb_{k,i}\neq \varnothing$, let
  $\varrho_{k,i}:\mcb_{k,i}\to\mcb_{k+1}$ be defined by
\begin{eqnarray*}
\varrho_{k,i}(e_{k,j}):=(I_1,\ldots ,I_k,J_k\setminus I_k,J_{k+1}),
\end{eqnarray*}
while if $\mcb_{k,i}=\varnothing$, we set $\varrho_{k,i}(\mcb_{k,i}):=\varnothing$.
Then $\varrho_{k,i}$ is well-defined and injective.
\item[$(e)$] If $i\neq j$, then $\varrho_{k,i}(\mcb_{k,i})$ and
  $\varrho_{k,j}(\mcb_{k,j})$ are two disjoint sets.
\item[$(f)$] Then $\mcb_{k+1}=\bigcup_{i=2}^{r_k}\varrho_{k,i}(\mcb_{k,i})$.
In particular, $r_{k+1}=\sum_{i=2}^{r_k}\lvert\mcb_{k,i}\rvert$.
\end{itemize}
\end{remark}
\begin{proof} 
Suppose that $I_{k+1}=\{n\}$ and let $(I_1,\ldots
,I_{k-1},J_k,J_{k+1})\in\mcb_k$, with $J_k\supseteq I_k$. Then
\begin{eqnarray*}
J_k\cup J_{k+1}=[n]\setminus \cup_{s=1}^{k-1}I_s=I_k\cup I_{k+1}=I_k\cup
\{n\}. 
\end{eqnarray*}
Since $n\in J_{k+1}$, it follows that $J_k=I_k$ and so there
does not exist any element in $\mcb_{k,i}$. On the other hand, if
$I_{k+1}\supsetneq \{n\}$ and $m\in I_{k+1}$ with $m\neq n$, just take
$J_{k}=I_{k}\cup \{m\}$ and $J_{k+1}=I_{k+1}\setminus \{m\}$. Then
$(I_1,\ldots ,I_{k-1},J_k,J_{k+1})$ is in $\mcb_{k,i}$. This proves
$(a)$.

Suppose that $\mcb_{k,i}\neq \varnothing$, i.e., $I_{k+1}\supsetneq
\{n\}$. So $I_k\subsetneq [n-1]\setminus \cup_{s=1}^{k-1}I_s$. Then
the elements $(I_1,\ldots ,I_{k-1},J_k,J_{k+1})$ of $\mcb_{k,i}$ are
in one-to-one correspondence with the non-empty subsets $J_k\setminus
I_k$ of the set $[n-1]\setminus \cup_{s=1}^kI_s=I_{k+1}\setminus
\{n\}$. This proves $(b)$.

Let $e_{k,j}=(I_1,\ldots ,I_{k-1},J_k,J_{k+1})\in\mcb_{k,i}$, that is,
$J_k\supsetneq I_k$. Then, according to the $\srle$, $e_{k,j}$
precedes $e_{k,i}$ and so $j<i$. Moreover, if $e_{k,h}=(I_1,\ldots
,I_{k-1},H_k,H_{k+1})$ is in $\mcb_{k,j}$, then $H_k\supsetneq
J_k\supsetneq I_k$, and so $e_{k,h}\in\mcb_{k,i}$. Furthermore,
$e_{k,j}\in\mcb_{k,i}$, but $e_{k,j}\not\in\mcb_{k,j}$. This proves
$(c)$.

Clearly, if $e_{k,j}=(I_1,\ldots ,I_{k-1},J_k,J_{k+1})\in\mcb_{k,i}$,
then $J_k\supsetneq I_k$ and $J_k\setminus I_k\neq \varnothing$. Thus
$\varrho_{k,i}(e_{k,j})$ is indeed in $\mcb_{k+1}$. Take
$e_{k,j}=(I_1,\ldots ,I_{k-1},J_k,J_{k+1})$ and $e_{k,h}=(I_1,\ldots
,I_{k-1},H_k,H_{k+1})$ in $\mcb_{k,i}$ such that
$\varrho_{k,i}(e_{k,j})=\varrho_{k,i}(e_{k,h})$. Then $J_{k+1}=H_{k+1}$
and, so $J_k=(J_{k}\cup J_{k+1})\setminus J_{k+1}= (H_k\cup
H_{k+1})\setminus H_{k+1}=H_k$, and $e_{k,j}=e_{k,h}$, which proves
$(d)$.

Next we prove $(e)$. Let $e_{k,i}=(I_1,\ldots ,I_k,I_{k+1})$ and
$e_{k,j}=(J_1,\ldots ,J_k,J_{k+1})$, with $\mcb_{k,i}\neq \varnothing$
and $\mcb_{k,j}\neq \varnothing$, respectively.  Suppose that
$e_{k,p}=(I_1,\ldots ,I_{k-1},P_k,P_{k+1})$ is in $\mcb_{k,i}$ and
$e_{k,q}=(J_1,\ldots ,J_{k-1},Q_k,Q_{k+1})$ is in
$\mcb_{k,j}$. Suppose also that
$\varrho_{k,i}(e_{k,p})=\varrho_{k,j}(e_{k,q})$. Then $I_s=J_s$, for
$s=1,\ldots ,k$. In particular, $I_{k+1}=[n]\setminus
\cup_{s=1}^kI_s=[n]\setminus\cup_{s=1}^kJ_s=J_{k+1}$. Thus
$e_{k,i}=e_{k,j}$. The rest follows by $(d)$.

Finally, we prove that $(f)$ holds. Consider $(H_1,\ldots
,H_{k+2})\in\mcb_{k+1}$. Let us find two basis elements
$e_{k,i}=(I_1,\ldots ,I_{k+1})\in\mcb_k$ and $e_{k,j}$ in
$\mcb_{k,i}$, so that $e_{k,j}=(I_1,\ldots ,I_{k-1},J_k,J_{k+1})$,
with $J_k\supsetneq I_k$, and such that
\begin{eqnarray*}
(H_1,\ldots ,H_{k+2})=\varrho_{k,i}(e_{k,j})=(I_1,\ldots
  ,I_k,J_k\setminus I_k,J_{k+1}).
\end{eqnarray*}
This forces $I_{s}=H_s$, for $s=1,\ldots ,k$, $J_{k}\setminus
I_k=H_{k+1}$ and $J_{k+1}=H_{k+2}$.  Therefore $I_{k+1}$ must be equal
to $[n]\setminus \cup_{s=1}^kI_s=[n]\setminus \cup_{s=1}^kH_s=H_{k+1}\cup
H_{k+2}$ and $J_k=I_k\cup H_{k+1}=H_k\cup H_{k+1}$.  Thus, both
\begin{eqnarray*}
&& e_{k,i}=(I_1,\ldots ,I_k,I_{k+1})=(H_1,\ldots ,H_k,H_{k+1}\cup
H_{k+2})\in\mcb_k\mbox{ and }\\ && e_{k,j}=(I_1,\ldots
,I_{k-1},J_k,J_{k+1})=(H_1,\ldots ,H_{k-1},H_{k}\cup
H_{k+1},H_{k+2})\in\mcb_{k,i},
\end{eqnarray*}
are uniquely determined by $(H_1,\ldots ,H_{k+2})\in\mcb_{k+1}$.
\end{proof}

Having in mind Notation~\ref{m-basis-elements},
Theorem~\ref{algorithm} and Remark~\ref{nonminimal}, then the
preceding Lemma~\ref{superfluous} and Remark~\ref{bki} can be
summarised as follows. Note that Lemma~\ref{superfluous} discards
elements $m^k_{j,i}$ which are known to be superfluous for
certain. However, it does not ensure that we have discarded all of
them. In fact, we will see that this is precisely characterised by the
digraph $\mbg$ being strongly complete (see
Corollary~\ref{min-resolution}).

\begin{proposition}\label{gbdesiredform}
Fix an integer $k$, with $1\leq k\leq n-2$. Suppose that $\mbfh_{k-1}$
holds. For each $i=2,\ldots ,r_k$, then
\begin{eqnarray*}
\{ m^k_{j,i}\mid e_{k,j}\in\mcb_{k,i}\}
\end{eqnarray*}
is a generating set for the module quotient $M_i(G_{k-1})$. Let
$\tau_k(e_{k,i},e_{k,j})$ stand for a $\tau$-syzygy associated to the
$S$-vector of $f_{k-1,i}=\dif_k(e_{k,i})$ and
$f_{k-1,j}=\dif_k(e_{k,j})$, where $e_{k,j}\in\mcb_{k,i}$. Then there
is a Gr\"obner basis of $\ker(\dif_k)\subset\mcc_k$ (w.r.t. the
monomial ordering on $\mcc_k$ induced by the monomial ordering on
$\mcc_{k-1}$ and the Gr\"obner basis $G_{k-1}$) of the form
\begin{eqnarray}\label{gbdf}
\bigcup_{i=2}^{r_k}\{\tau_{k}(e_{k,i},e_{k,j})\mid
e_{k,j}\in\mcb_{k,i}\}.
\end{eqnarray}
The cardinality of this Gr\"obner basis is
$\sum_{i=2}^{r_k}\lvert\mcb_{k,i}\rvert$, which coincides with
$r_{k+1}$, the rank of $\mcc_{k+1}$.
\end{proposition}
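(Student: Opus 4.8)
The plan is to feed the combinatorial data of Lemma~\ref{superfluous} and Remark~\ref{bki} into the Schreyer algorithm (Theorem~\ref{algorithm}) and then clean up with Remark~\ref{nonminimal}. Since $\mbfh_{k-1}$ holds, part $(a)$ of $\mbfh_{k-1}$ (for $k=1$, Corollary~\ref{GBI(L)}) says that $G_{k-1}=\dif_k(\mcb_k)$ is a Gr\"obner basis of $\langle G_{k-1}\rangle={\rm Im}(\dif_k)=\ker(\dif_{k-1})$, so the algorithm applies. For the first assertion I would fix $i\in\{2,\ldots ,r_k\}$ and run through the $m^k_{j,i}$, $1\le j<i$, which by Notation~\ref{m-basis-elements} generate $M_i(G_{k-1})$: if $e_{k,j}$ does not share the first $k-1$ blocks of $e_{k,i}$ then $m^k_{j,i}=0$ by Lemma~\ref{superfluous}$(a)$; if it shares them and has $J_k=I_k$ then $e_{k,j}=e_{k,i}$, impossible for $j<i$; if $J_k\supsetneq I_k$ then $e_{k,j}\in\mcb_{k,i}$; and if $J_k\not\supset I_k$ then (noting that $J_k\subsetneq I_k$ is ruled out by $j<i$ via the $\srle$, so $J_k\cup I_k\supsetneq I_k$) Lemma~\ref{superfluous}$(c)$ produces $l<j$ with $e_{k,l}\in\mcb_{k,i}$ and $m^k_{l,i}\mid m^k_{j,i}$. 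Hence $M_i(G_{k-1})=(m^k_{j,i}\mid e_{k,j}\in\mcb_{k,i})$, the inclusion $e_{k,j}\in\mcb_{k,i}\Rightarrow j<i$ being Remark~\ref{bki}$(c)$.

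For the second assertion I apply Theorem~\ref{algorithm}. The $M_i(G_{k-1})$ are monomial ideals and, by Lemma~\ref{superfluous}$(b)$, each $m^k_{j,i}$ with $e_{k,j}\in\mcb_{k,i}$ is a monomial up to sign; hence every minimal monomial generator $x^\alpha$ of $M_i(G_{k-1})$ is divisible by, and so (by minimality, cf. Remark~\ref{uptosign}) equal up to scalar to, some $m^k_{j,i}$ with $e_{k,j}\in\mcb_{k,i}$. Choosing $j(i,\alpha):=j$ for such a $j$, Theorem~\ref{algorithm} returns a Gr\"obner basis $G_k$ of $\ker(\dif_k)$ all of whose elements have the form $\tau_k(e_{k,i},e_{k,j})$ with $2\le i\le r_k$ and $e_{k,j}\in\mcb_{k,i}$, so $G_k\subseteq\tilde G_k:=\bigcup_{i=2}^{r_k}\{\tau_k(e_{k,i},e_{k,j})\mid e_{k,j}\in\mcb_{k,i}\}$. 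For each remaining pair $(i,j)$ with $e_{k,j}\in\mcb_{k,i}$ I fix any $\tau$-syzygy $\tau_k(e_{k,i},e_{k,j})$ associated to the $S$-vector of $\dif_k(e_{k,i})$ and $\dif_k(e_{k,j})$ (these exist since $G_{k-1}$ is a Gr\"obner basis, via Buchberger's criterion as in Discussion~\ref{disc}), and since each such syzygy lies in $\ker(\dif_k)$, the argument of Remark~\ref{nonminimal} (which only uses that the adjoined element lies in the kernel) shows $\tilde G_k$ is again a Gr\"obner basis of $\ker(\dif_k)$.

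For the count, recall from the end of Discussion~\ref{disc} that a $\tau$-syzygy obtained from a standard expression determines the ordered pair $(e_{k,i},e_{k,j})$ it comes from: $\lt(\tau_k(e_{k,i},e_{k,j}))=m^k_{j,i}e_{k,i}$ recovers $e_{k,i}$ and the next term recovers $e_{k,j}$, using $j<i$ (Remark~\ref{bki}$(c)$). Thus distinct pairs give distinct elements of $\tilde G_k$, so $\lvert\tilde G_k\rvert=\sum_{i=2}^{r_k}\lvert\mcb_{k,i}\rvert$, which equals $r_{k+1}=\rank(\mcc_{k+1})$ by Remark~\ref{bki}$(f)$. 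I expect the only genuinely delicate point to be the compatibility between the free choices allowed in the Schreyer algorithm and the prescribed index set $\mcb_{k,i}$ — that every minimal generator of $M_i(G_{k-1})$ is realised by some $m^k_{j,i}$ with $e_{k,j}\in\mcb_{k,i}$ (so $G_k\subseteq\tilde G_k$), together with the injectivity bookkeeping pinning the cardinality down to exactly $r_{k+1}$; both reduce to Lemma~\ref{superfluous} and the injectivity statements in Remark~\ref{bki} and Discussion~\ref{disc}, the remaining manipulations being routine.
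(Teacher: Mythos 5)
Your proof is correct and follows essentially the same route as the paper's: Lemma~\ref{superfluous} together with Remark~\ref{bki} to cut the generators of $M_i(G_{k-1})$ down to $\{m^k_{j,i}\mid e_{k,j}\in\mcb_{k,i}\}$, Theorem~\ref{algorithm} plus Remark~\ref{nonminimal} to obtain a Gr\"obner basis of the form \eqref{gbdf}, and Discussion~\ref{disc} with Remark~\ref{bki}~$(f)$ for the count $r_{k+1}$. Your additional case analysis (in particular ruling out $J_k\subsetneq I_k$ via the $\srle$ and $j<i$, and checking that every minimal generator of $M_i(G_{k-1})$ is realised within $\mcb_{k,i}$) only makes explicit what the paper's proof leaves to the citations.
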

\begin{proof}
Indeed, Lemma~\ref{superfluous} ensures that in order to find a
generating set of $M_i(G_{k-1})$ we only need to consider the
$m^{k}_{j,i}$ with $e_{k,j}\in\mcb_{k,i}$. By Theorem~\ref{algorithm}
and Remark~\ref{nonminimal}, we deduce that indeed there is a Gr\"obner
basis of $\ker(\dif_k)$ of the form \eqref{gbdf}.

By Discussion~\ref{disc}, we know that if
$\tau_{k}(e_{k,i},e_{k,j})=\tau_k(e_{k,p},e_{k,q})$, then
$e_{k,i}=e_{k,p}$ and $e_{k,j}=e_{k,q}$. In particular, the union in
\eqref{gbdf} is of disjoint sets and
\begin{eqnarray*}
\lvert\{\tau_k(e_{k,i},e_{k,j})\mid
e_{k,j}\in\mcb_{k,i}\}\rvert=\lvert\mcb_{k,i}\rvert. 
\end{eqnarray*}
Thus, the cardinality of this Gr\"obner basis is
$\sum_{i=2}^{r_k}\lvert\mcb_{k,i}\rvert$. By Remark~\ref{bki}, $(f)$,
$\sum_{i=2}^{r_k}\lvert\mcb_{k,i}\rvert=r_{k+1}$.
\end{proof}

\begin{remark}\label{min-m}
With the notations above, if $\mbg$ is strongly complete or,
equivalently, if $L$ is a $\pcb$ matrix, then
\begin{eqnarray*}
\vert\mcb_{k,i}\rvert=
\lvert \{ m^k_{j,i}\mid e_{k,j}\in\mcb_{k,i}\}\rvert.
\end{eqnarray*}
For in the strongly complete case, the map $\mcb_{k,i}\to\mcc_{k-1}$
defined by $e_{k,j}\mapsto m^{k}_{j,i}$ is injective. Indeed, if
$e_{k,j}\in\mcb_{k,i}$, then $m^k_{j,i}=(-1)^{k-1}x^{J_{k}\cap
  I_{k+1}\to J_{k+1}}$ (see \eqref{mji} in Lemma~\ref{superfluous}),
where $e_{k,i}=(I_1,\ldots ,I_{k+1})$ and $e_{k,j}=(I_1,\ldots,
I_{k-1},J_k,J_{k+1})$, with $J_k\supsetneq I_k$. Since $a_{i,j}>0$,
for all $i,j$, it follows that $m^k_{j,i}$ determines $J_k\cap
I_{k+1}$. Hence $J_k=(J_k\cap I_k)\cup (J_k\cap I_{k+1})=I_k\cup
(J_k\cap I_{k+1})$, and so $J_{k+1}$ and hence $e_{k,j}$ are uniquely
determined by $m^k_{j,i}$.
\end{remark}

\subsection{The images  of basis elements 
are syzygies associated to {\em S}-vectors} $\phantom{+}$\bigskip

\noindent Let us show that the differential $\dif_{k+1}$ of the $\cyc$
complex maps each basis element of $\mcb_{k+1}$ to a $\tau$-syzygy
associated to an $S$-vector, up to a $\pm$ sign (see
Discussion~\ref{disc} ).

\begin{proposition}\label{tau}
Fix an integer $k$, with $1\leq k\leq n-2$. Suppose that $\mbfh_{k-1}$
holds. Fix a basis element $(I_1,\ldots ,I_{k+2})$ in $\mcb_{k+1}$.
Then
\begin{eqnarray*}
\dif_{k+1}(I_1,\ldots,I_{k+2})=-\tau_{k}(e_{k,i},e_{k,j}),
\end{eqnarray*}
where $e_{k,i}=(I_1,\ldots ,I_{k+1}\cup I_{k+2})$ and
$e_{k,j}=(I_1,\ldots,I_{k}\cup I_{k+1},I_{k+2})$. That is,
$-\dif_{k+1}(I_1,\ldots,I_{k+2})$ is a $\tau$-syzygy associated to the
$S$-vector of $f_{k-1,i}=\dif_{k}(e_{k,i})$ and
$f_{k-1,j}=\dif_{k}(e_{k,j})$.
\end{proposition}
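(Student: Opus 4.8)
The plan is to write out $\dif_{k+1}(I_1,\ldots,I_{k+2})$ explicitly using the differential formula \eqref{differential}, and to match it term-by-term against the $\tau$-syzygy $\tau_k(e_{k,i},e_{k,j})$ produced by the Schreyer algorithm for the $S$-vector of $f_{k-1,i}=\dif_k(e_{k,i})$ and $f_{k-1,j}=\dif_k(e_{k,j})$. First I would record, from $\mbfh_{k-1}(b)$, the leading terms $\lt(f_{k-1,i})=(-1)^{k-1}x^{I_{k+1}\cup I_{k+2}\to\,?}(\ldots)$ --- here one must be careful: for $e_{k,i}=(I_1,\ldots,I_k,I_{k+1}\cup I_{k+2})$ the leading term is $(-1)^{k-1}x^{I_k\to I_{k+1}\cup I_{k+2}}(I_1,\ldots,I_{k-1},I_k\cup I_{k+1}\cup I_{k+2})$, while for $e_{k,j}=(I_1,\ldots,I_{k-1},I_k\cup I_{k+1},I_{k+2})$ it is $(-1)^{k-1}x^{I_k\cup I_{k+1}\to I_{k+2}}(I_1,\ldots,I_{k-1},I_k\cup I_{k+1}\cup I_{k+2})$. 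Since these two leading monomials are supported on the same basis element $(I_1,\ldots,I_{k-1},I_k\cup I_{k+1}\cup I_{k+2})$, the $S$-vector $S(f_{k-1,i},f_{k-1,j})$ is genuinely nonzero, and $m^k_{j,i}=\Lcm(\cdot,\cdot)/\lt(f_{k-1,i})$ can be computed via \eqref{disjoint} and \eqref{lcm-equality} exactly as in the proof of Lemma~\ref{superfluous}$(b)$ with $J_k=I_k\cup I_{k+1}$; one finds $m^k_{j,i}=(-1)^{k-1}x^{I_{k+1}\to I_{k+2}}$ and, symmetrically, $m^k_{i,j}=(-1)^{k-1}x^{I_k\to I_{k+1}}$ (the ``$(E\to B,C)^+$'' correction factors vanish because $I_k\subsetneq I_k\cup I_{k+1}$).

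Next I would form $m^k_{j,i}f_{k-1,i}-m^k_{i,j}f_{k-1,j}$. Here $f_{k-1,i}=\dif_k(I_1,\ldots,I_k,I_{k+1}\cup I_{k+2})$ and $f_{k-1,j}=\dif_k(I_1,\ldots,I_{k-1},I_k\cup I_{k+1},I_{k+2})$ are themselves given by \eqref{differential}, so both products expand into explicit $\mbk[x]$-linear combinations of basis elements of $\mcc_{k-1}$; using the monomial identities \eqref{disjoint} repeatedly ($x^{A\cup B\to C}=x^{A\to C}x^{B\to C}$, $x^{A\to B\cup C}=x^{A\to B}x^{A\to C}$) to combine the scalar $m^k_{j,i}=\pm x^{I_{k+1}\to I_{k+2}}$ (resp.\ $m^k_{i,j}=\pm x^{I_k\to I_{k+1}}$) with the coefficients $x^{I_s\to I_{s+1}}$ appearing in each differential. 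The claim is that after this combination, the two sums agree on every basis element except on $e_{k,i}$ and $e_{k,j}$ themselves (giving the $m^k_{j,i}e_{k,i}-m^k_{i,j}e_{k,j}$ part of the $\tau$-syzygy) and on the remaining basis elements, where the coefficients are precisely $-(g^{(i,j)}_s)$ times $e_{k,s}$ for basis elements $e_{k,s}$ that are obtained by coalescing blocks of $(I_1,\ldots,I_{k+2})$ in the ``other'' ways --- exactly the terms $\dif_{k+1}(I_1,\ldots,I_{k+2})$ produces via \eqref{differential}. Concretely, I would show
\begin{eqnarray*}
m^k_{j,i}\dif_k(e_{k,i})-m^k_{i,j}\dif_k(e_{k,j})=-\dif_{k-1}\bigl(\text{something}\bigr),
\end{eqnarray*}
but more usefully, that $\dif_{k+1}(I_1,\ldots,I_{k+2})$ already has the shape $m^k_{j,i}e_{k,i}-m^k_{i,j}e_{k,j}-\sum_s g^{(i,j)}_s e_{k,s}$ up to the global sign $-1$: the first two summands of \eqref{differential} applied to $(I_1,\ldots,I_{k+2})$ that merge $I_k$ with $I_{k+1}$, and $I_{k+1}$ with $I_{k+2}$, give (after using $n\in I_{k+2}$ and the sign bookkeeping) exactly $-m^k_{i,j}e_{k,j}$ and $+m^k_{j,i}e_{k,i}$ --- wait, up to the overall $-1$ --- so the remaining $k$ terms of $\dif_{k+1}$ form a standard expression with remainder zero for $S(f_{k-1,i},f_{k-1,j})$. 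The verification that it \emph{is} a standard expression uses $\mbfh_{k-1}(b)$ to read off each $\lt(g^{(i,j)}_s f_{k-1,s})$ and compare with $\lm(S(f_{k-1,i},f_{k-1,j}))$, together with Discussion~\ref{disc}: once we know $\dif_{k+1}(I_1,\ldots,I_{k+2})\in\ker(\dif_k)$ (Proposition~\ref{chain-complex}) and has leading term $m^k_{j,i}e_{k,i}$ (with $i>j$, by the $\srle$ ordering and Remark~\ref{bki}$(c)$), it is automatically a $\tau$-syzygy associated to the $S$-vector of $f_{k-1,i}$ and $f_{k-1,j}$.

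The main obstacle will be the sign and indexing bookkeeping in matching \eqref{differential} for $\dif_{k+1}(I_1,\ldots,I_{k+2})$ against the $\tau$-syzygy: one must check that the term of $\dif_{k+1}$ merging $I_{k+1}$ with $I_{k+2}$ is $(-1)^{k}x^{I_{k+1}\to I_{k+2}}e_{k,i}=-m^k_{j,i}e_{k,i}$, that the term merging $I_k$ with $I_{k+1}$ is $(-1)^{k-1}x^{I_k\to I_{k+1}}e_{k,j}=+m^k_{i,j}e_{k,j}$ (hence their contribution to $-\dif_{k+1}$ is $m^k_{j,i}e_{k,i}-m^k_{i,j}e_{k,j}$, matching the head of the $\tau$-syzygy), and that the last, ``wrap-around'' term $-x^{I_{k+2}\to I_1}(I_2,\ldots,I_{k+1},I_1\cup I_{k+2})$ together with the remaining interior terms constitute a valid standard expression --- i.e.\ that none of their leading terms exceeds $\lm(S(f_{k-1,i},f_{k-1,j}))$ in the induced ordering on $\mcc_k$. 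This last inequality is exactly the content one gets from $\mbfh_{k-1}(b)$ applied to each $\dif_k$ of the coalesced partitions, combined with the $S$-vector leading-term estimate recorded in Definitions~\ref{secondsetofdefinitions}$(7)$; I would isolate it as the key computation. Once that is in hand, $-\dif_{k+1}(I_1,\ldots,I_{k+2})=\tau_k(e_{k,i},e_{k,j})$ follows, with the indices $i$ and $j$ identified through Remark~\ref{bki}$(c)$ (namely $e_{k,j}\in\mcb_{k,i}$, so $j<i$) and Discussion~\ref{disc}.
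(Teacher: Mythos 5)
Your proposal is correct and follows essentially the same route as the paper: compute $m^k_{j,i}=(-1)^{k-1}x^{I_{k+1}\to I_{k+2}}$ and $m^k_{i,j}=(-1)^{k-1}x^{I_k\to I_{k+1}}$ from $\mbfh_{k-1}(b)$, use $\dif_k\circ\dif_{k+1}=0$ to recognize that the remaining terms of $\dif_{k+1}(I_1,\ldots,I_{k+2})$ furnish a standard expression with remainder zero for $S(f_{k-1,i},f_{k-1,j})$, and match signs to conclude $\dif_{k+1}(I_1,\ldots,I_{k+2})=-\tau_k(e_{k,i},e_{k,j})$. The step you flag as the key computation (that the leading monomials of the interior and wrap-around summands do not exceed $\lm(S(f_{k-1,i},f_{k-1,j}))$, the comparison taking place in $\mcc_{k-1}$) is exactly the bulk of the paper's proof, carried out there by showing that only the leading terms of $m^k_{j,i}f_{k-1,i}$ and $m^k_{i,j}f_{k-1,j}$ cancel and that each such leading monomial occurs among the non-leading terms of $m^k_{j,i}f_{k-1,i}$.
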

\begin{proof}
Note that, according to the $\srle$, $e_{k,j}$ does indeed precede
$e_{k,i}$ (see Remark~\ref{srle}) and it follows from the definition
that $e_{k,j}\in\mcb_{k,i}$ (see Remark~\ref{bki}). By
Definition~\ref{cyccomplex}, we have:
\begin{eqnarray*}
\dif_{k+1}(I_1,\ldots,I_{k+2}) & = &(-1)^{k}x^{I_{k+1}\to
    I_{k+2}}(I_1,\ldots ,I_{k+1}\cup I_{k+2})\\ && +
  (-1)^{k-1}x^{I_{k}\to I_{k+1}}(I_1,\ldots ,I_{k}\cup
  I_{k+1},I_{k+2})\\ && + \sum_{s=1}^{k-1}(-1)^{s-1}x^{I_s\to
    I_{s+1}}(I_1,\ldots ,I_s\cup I_{s+1},\ldots
  ,I_{k+2})\\ && - x^{I_{k+2}\to I_1}(I_2,\ldots,I_{k+1},I_1\cup I_{k+2}).
\end{eqnarray*}
Set $e_{k,j_s}:=(I_1,\ldots ,I_s\cup I_{s+1},\ldots ,I_{k+2})$, for
some $j_s$, with $1\leq j_1<\cdots< j_s<\cdots<j_{k-1}<j<i$.
Moreover, set $e_{k,l}:=(I_2,\ldots,I_{k+1},I_1\cup I_{k+2})$, for
some $l\in \{1,\ldots ,r_{k}\}$. Then
\begin{eqnarray}\label{difbasisele}
\dif_{k+1}(I_1,\ldots,I_{k+2}) & = & (-1)^{k}x^{I_{k+1}\to
  I_{k+2}}e_{k,i} + (-1)^{k-1}x^{I_{k}\to
  I_{k+1}}e_{k,j}\\ && + \sum_{s=1}^{k-1}(-1)^{s-1}x^{I_s\to
  I_{s+1}}e_{k,j_s}-x^{I_{k+2}\to I_1}e_{k,l}. \notag
\end{eqnarray}
Since $\dif_{k}\circ\dif_{k+1}=0$, and $\dif_{k}(e_{k,u})=f_{k-1,u}$,
for each $u$, we obtain:
\begin{eqnarray}\label{s-vector}
&& (-1)^{k-1}x^{I_{k+1}\to I_{k+2}}f_{k-1,i}-(-1)^{k-1}x^{I_{k}\to
    I_{k+1}}f_{k-1,j}\\ && = \sum_{s=1}^{k-1}(-1)^{s-1}x^{I_s\to
    I_{s+1}}f_{k-1,j_s}-x^{I_{k+2}\to I_1}f_{k-1,l}. \notag
\end{eqnarray}
Let us see that the left hand side of \eqref{s-vector} coincides
with the $S$-vector of $f_{k-1,i}$ and $f_{k-1,j}$. 

Since $\mbfh_{k-1}$ holds, we have
\begin{eqnarray*}
\lt(f_{k-1,j}) & =
&\lt(\dif_{k}(e_{k,j}))=\lt(\dif_{k}(I_1,\ldots,I_{k}\cup
I_{k+1},I_{k+2}))\\ & = & (-1)^{k-1}x^{I_{k}\cup I_{k+1}\to
  I_{k+2}}(I_1,\ldots ,I_{k}\cup I_{k+1}\cup I_{k+2}).
\end{eqnarray*}
Similarly,
\begin{eqnarray*}
\lt(f_{k-1,i}) & = & \lt(\dif_{k}(e_{k,i}))=\lt(\dif_{k}(I_1,\ldots,
I_{k+1}\cup I_{k+2}))\\ & = & (-1)^{k-1}x^{I_{k}\to I_{k+1}\cup I_{k+2}}(I_1,\ldots
,I_{k}\cup I_{k+1}\cup I_{k+2}).
\end{eqnarray*}
Then 
\begin{eqnarray*}
&& \Lcm(\lm(f_{k-1,j}),\lm(f_{k-1,i}))\\ && = \Lcm(x^{I_{k}\cup I_{k+1}\to
  I_{k+2}},x^{I_{k}\to I_{k+1}\cup I_{k+2}})(I_1,\ldots ,I_{k}\cup
I_{k+1}\cup I_{k+2}).
\end{eqnarray*}
Using \eqref{disjoint} in Notation~\ref{lcm-notation},
\begin{eqnarray*}
x^{I_{k}\cup I_{k+1}\to I_{k+2}}=x^{I_{k}\to I_{k+2}}x^{I_{k+1}\to
  I_{k+2}}\mbox{ and } x^{I_{k}\to I_{k+1}\cup I_{k+2}}=x^{I_{k}\to
  I_{k+1}}x^{I_{k}\to I_{k+2}}.
\end{eqnarray*}
Therefore,
\begin{eqnarray*}
\Lcm(x^{I_{k}\cup I_{k+1}\to I_{k+2}},x^{I_{k}\to I_{k+1}\cup
  I_{k+2}})=x^{I_{k}\to I_{k+2}}x^{I_{k+1}\to I_{k+2}}x^{I_{k}\to I_{k+1}}.
\end{eqnarray*}
Hence
\begin{eqnarray*}
m^{k}_{j,i}= \frac{\Lcm(x^{I_{k}\cup I_{k+1}\to
    I_{k+2}},x^{I_{k}\to I_{k+1}\cup
    I_{k+2}})}{(-1)^{k-1}x^{I_{k}\to I_{k+1}\cup
    I_{k+2}}}=(-1)^{k-1}x^{I_{k+1}\to I_{k+2}},
\end{eqnarray*}
and, similarly, $m^{k}_{i,j}=(-1)^{k-1}x^{I_{k}\to
  I_{k+1}}$. Therefore, the $S$-vector of $f_{k-1,i}$ and $f_{k-1,j}$ is
equal to
\begin{eqnarray*}
S(f_{k-1,i},f_{k-1,j}) & = & m^{k}_{j,i}f_{k-1,i}-m^{k}_{i,j}f_{k-1,j} 
\\ & = & (-1)^{k-1}x^{I_{k+1}\to
  I_{k+2}}f_{k-1,i}-(-1)^{k-1}x^{I_{k}\to I_{k+1}}f_{k-1,j},
\end{eqnarray*}
which is the left hand side of \eqref{s-vector}.

Now, let us prove that the right hand side of \eqref{s-vector} is a
standard expression for the $S$-vector $S(f_{k-1,i},f_{k-1,j})$
w.r.t. the Gr\"obner basis $G_{k-1}=\{f_{k-1,1},\ldots
,f_{k-1,r_{k}}\}$, with remainder zero. Note that $G_{k-1}$ is indeed
a Gr\"obner basis of $\ker(\dif_{k-1})$ due to hypothesis
$\mbfh_{k-1}$.  First, observe that the only coincident terms among
the summands of
\begin{eqnarray*}
m^{k}_{j,i}f_{k-1,i}=(-1)^{k-1}x^{I_{k+1}\to I_{k+2}}f_{k-1,i}\;\mbox{ and }\;
m^{k}_{i,j}f_{k-1,j}=(-1)^{k-1}x^{I_{k}\to I_{k+1}}f_{k-1,j}
\end{eqnarray*}
are precisely the leading terms. Indeed: 
\begin{eqnarray}\label{fi}
m^{k}_{j,i}f_{k-1,i} & = & (-1)^{k-1}x^{I_{k+1}\to
  I_{k+2}}f_{k-1,i}=(-1)^{k-1}x^{I_{k+1}\to
  I_{k+2}}\dif_{k}(e_{k,i})\\ & = & (-1)^{k-1}x^{I_{k+1}\to
  I_{k+2}}\dif_{k}(I_1,\ldots ,I_{k+1}\cup
I_{k+2}) \notag \\ & = & (-1)^{k-1}x^{I_{k+1}\to
  I_{k+2}}\sum_{s=1}^{k-1}(-1)^{s-1}x^{I_s\to I_{s+1}}(I_1,\ldots
,I_s\cup I_{s+1},\ldots ,I_{k+1}\cup I_{k+2}) \notag
\\ && + \underline{(-1)^{k-1}x^{I_{k+1}\to I_{k+2}}(-1)^{k-1}x^{I_{k}\to
    I_{k+1}\cup I_{k+2}}(I_1,\ldots ,I_{k}\cup I_{k+1}\cup
  I_{k+2})} \notag \\ && -(-1)^{k-1}x^{I_{k+1}\to I_{k+2}}x^{I_{k+1}\cup I_{k+2}\to
  I_1}(I_2,\ldots,I_1\cup I_{k+1}\cup I_{k+2}), \notag
\end{eqnarray}
whereas 
\begin{eqnarray}\label{fj}
m^{k}_{i,j}f_{k-1,j} & = & (-1)^{k-1}x^{I_{k}\to I_{k+1}}f_{k-1,j}=
  (-1)^{k-1}x^{I_{k}\to I_{k+1}}\dif_{k}(e_{k,j})\\ & = &
  (-1)^{k-1}x^{I_{k}\to I_{k+1}}\dif_{k}(I_1,\ldots ,I_{k}\cup
  I_{k+1},I_{k+2})\notag \\ & = & (-1)^{k-1}x^{I_{k}\to
    I_{k+1}}\sum_{s=1}^{k-2}(-1)^{s-1}x^{I_s\to I_{s+1}}(I_1,\ldots
  ,I_s\cup I_{s+1},\ldots ,I_{k}\cup I_{k+1}, I_{k+2})\notag \\ && +
  (-1)^{k-1}x^{I_{k}\to I_{k+1}}(-1)^{k-2}x^{I_{k-1}\to I_{k}\cup
    I_{k+1}} (I_1,\ldots , I_{k-1}\cup I_{k}\cup I_{k+1}, I_{k+2}) 
  \notag \\ && + \underline{(-1)^{k-1}x^{I_{k}\to
      I_{k+1}}(-1)^{k-1}x^{I_{k}\cup I_{k+1}\to I_{k+2}}(I_1,\ldots
    ,I_{k}\cup I_{k+1}\cup I_{k+2})} \notag \\ &&
  -(-1)^{k-1}x^{I_{k}\to I_{k+1}}x^{I_{k+2}\to
    I_1}(I_2,\ldots,I_{k}\cup I_{k+1},I_1\cup I_{k+2}). \notag
\end{eqnarray}
Excluding the (underlined) coincident leading terms (note that
hypothesis $\mbfh_{k-1}$ holds), the basis elements involved in
\eqref{fi} end either in $I_{k+1}\cup I_{k+2}$ or else in $I_1\cup I_{k+1}\cup
I_{k+2}$, while the basis elements involved in \eqref{fj} end either
in $I_{k+2}$ or else in $I_1\cup I_{k+2}$. Therefore, the $S$-vector
of $f_{k-1,i}$ and $f_{k-1,j}$ cancels the leading terms of these
expressions, but no other summand of $m^{k}_{j,i}f_{k-1,i}$ is
cancelled by any other summand of $m^{k}_{i,j}f_{k-1,j}$.  In
particular,
\begin{eqnarray}\label{geq1}
\lm(S(f_{k-1,i},f_{k-1,j}))\geq
\lm(m^{k}_{j,i}f_{k-1,i}-\lt(m^{k}_{j,i}f_{k-1,i})).
\end{eqnarray}

Now, we need to identify the leading monomials of the summands on the
right hand side of \eqref{s-vector}. For $1\leq s\leq k-1$, since the
hypothesis $\mbfh_{k-1}$ holds,
\begin{eqnarray}\label{lmRHSs}
\lm(x^{I_s\to I_{s+1}}f_{k-1,j_s}) & = & x^{I_s\to
    I_{s+1}}\lm(\dif_{k}(e_{k,j_s}))\\ & = & x^{I_s\to
    I_{s+1}}\lm(\dif_{k}(I_1,\ldots ,I_s\cup I_{s+1},\ldots
  ,I_{k+2})) \notag \\ & = & x^{I_s\to I_{s+1}}x^{I_{k+1}\to I_{k+2}}
  (I_1,\ldots ,I_s\cup I_{s+1},\ldots ,I_{k+1}\cup I_{k+2}). \notag
\end{eqnarray}
Similarly, the leading monomial of the last summand on the right hand
side of \eqref{s-vector} is:
\begin{eqnarray}\label{lmRHSl}
\lm(x^{I_{k+2}\to I_{1}}f_{k-1,l}) & = & x^{I_{k+2}\to
    I_{1}}\lm(\dif_{k}(e_{k,l}))\\ & = & x^{I_{k+2}\to
    I_{l}}\lm(\dif_{k}(I_2,\ldots ,I_{k+1}, I_{1}\cup I_{k+2}))
  \notag \\ & = & x^{I_{k+2}\to I_{1}}x^{I_{k+1}\to I_1\cup I_{k+2}}
  (I_2,\ldots ,I_1\cup I_{k+1}\cup I_{k+2}). \notag
\end{eqnarray}
All the monomials terms in \eqref{lmRHSs} and in \eqref{lmRHSl} appear
in \eqref{fi} and are different from the underlined leading term
$\lt(m^{k}_{j,i}f_{k-1,i})$. (By \eqref{disjoint}, $x^{I_{k+2}\to
  I_1}x^{I_{k+1}\to I_1\cup I_{k+2}}=x^{I_{k+1}\to
  I_{k+2}}x^{I_{k+1}\cup I_{k+2}\to I_1}$.)  Thus
\begin{eqnarray}\label{geq2}
&& \lm(m^{k}_{j,i}f_{k-1,i}-\lt(m^{k}_{j,i}f_{k-1,i}))\geq \\ &&
\lm(x^{I_s\to I_{s+1}}f_{k-1,j_s}),\lm(x^{I_{k+2}\to I_{1}}f_{k-1,l}). \notag
\end{eqnarray}
Concatenating \eqref{geq1} and \eqref{geq2}, we deduce that the right
hand side of \eqref{s-vector} is indeed a standard expression for
$S(f_{k-1,i},f_{k-1,j})$ with remainder zero and w.r.t. the Gr\"obner
basis $G_{k-1}$.

Hence, the corresponding $\tau$-syzygy $\tau_{k}(e_{k,i},e_{k,j})$
associated to the $S$-vector of the elements
$f_{k-1,i}=\dif_{k}(e_{k,i})$ and $f_{k-1,j}=\dif_{k}(e_{k,j})$ is
\begin{eqnarray}\label{tau-vector}
\tau_{k}(e_{k,i},e_{k,j}) & = & (-1)^{k-1}x^{I_{k+1}\to
  I_{k+2}}e_{k,i} - (-1)^{k-1}x^{I_{k}\to
  I_{k+1}}e_{k,j}\\ &&  -\sum_{s=1}^{k-1}(-1)^{s-1}x^{I_s\to
  I_{s+1}}e_{k,j_s}+x^{I_{k+2}\to I_1}e_{k,l}. \notag
\end{eqnarray}
Comparing \eqref{tau-vector} with \eqref{difbasisele}, we get
$\dif_{k+1}(I_1,\ldots,I_{k+2})=-\tau_{k}(e_{k,i},e_{k,j})$, the
desired equality. 
\end{proof}

As an immediate consequence of Proposition~\ref{tau}, we get the
following result. The second part can be seen as a natural
generalisation of Lemma~\ref{fc=fd}.

\begin{corollary}\label{ltbij}
Fix an integer $k$, with $1\leq k\leq n-2$. Suppose that $\mbfh_{k-1}$
holds. Then 
\begin{itemize}
\item[$(a)$]
  $\lt(\dif_{k+1}(I_1,\ldots,I_{k+1},I_{k+2}))=(-1)^{k}x^{I_{k+1}\to
  I_{k+2}}(I_1,\ldots, I_{k},I_{k+1}\cup I_{k+2})$.
\item[$(b)$] Moreover, the restriction of the map
  $\dif_{k+1}:\mcc_{k+1}\to\mcc_{k}$ over the basis set $\mcb_{k+1}$
  is injective. In particular, since $G_k:=\dif_{k+1}(\mcb_{k+1})$,
  then 
\begin{eqnarray*}
\lvert
G_k\rvert=\lvert\dif_{k+1}(\mcb_{k+1})\rvert=
\lvert\mcb_{k+1}\rvert=r_{k+1}.
\end{eqnarray*}
\end{itemize}
\end{corollary}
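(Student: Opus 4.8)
The plan is to deduce both parts directly from Proposition~\ref{tau} together with the structural facts about $\tau$-syzygies recorded in Discussion~\ref{disc}, so that almost no new computation is needed. By Proposition~\ref{tau}, since $\mbfh_{k-1}$ holds, $\dif_{k+1}(I_1,\ldots,I_{k+2})=-\tau_k(e_{k,i},e_{k,j})$, where $e_{k,i}=(I_1,\ldots,I_k,I_{k+1}\cup I_{k+2})$ and $e_{k,j}=(I_1,\ldots,I_{k-1},I_k\cup I_{k+1},I_{k+2})$, and this is the $\tau$-syzygy associated to the $S$-vector of $f_{k-1,i}=\dif_k(e_{k,i})$ and $f_{k-1,j}=\dif_k(e_{k,j})$; moreover in the proof of Proposition~\ref{tau} it was computed that $m^k_{j,i}=(-1)^{k-1}x^{I_{k+1}\to I_{k+2}}$.

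For part $(a)$ I would invoke the highlighted property \eqref{inducedlm} of Discussion~\ref{disc}, which (applied at the $k$-th stage) gives $\lt(\tau_k(e_{k,i},e_{k,j}))=m^k_{j,i}e_{k,i}$. Substituting the value of $m^k_{j,i}$ and the identity $e_{k,i}=(I_1,\ldots,I_k,I_{k+1}\cup I_{k+2})$, and using $\lt(-\tau_k(e_{k,i},e_{k,j}))=-\lt(\tau_k(e_{k,i},e_{k,j}))$ together with $-(-1)^{k-1}=(-1)^k$, one obtains $\lt(\dif_{k+1}(I_1,\ldots,I_{k+1},I_{k+2}))=(-1)^k x^{I_{k+1}\to I_{k+2}}(I_1,\ldots,I_k,I_{k+1}\cup I_{k+2})$, which is the asserted formula.

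For part $(b)$, suppose $\dif_{k+1}(I_1,\ldots,I_{k+2})=\dif_{k+1}(I'_1,\ldots,I'_{k+2})$ for two elements of $\mcb_{k+1}$. By Proposition~\ref{tau} this means $\tau_k(e_{k,i},e_{k,j})=\tau_k(e_{k,p},e_{k,q})$ with $e_{k,p}=(I'_1,\ldots,I'_k,I'_{k+1}\cup I'_{k+2})$ and $e_{k,q}=(I'_1,\ldots,I'_{k-1},I'_k\cup I'_{k+1},I'_{k+2})$. By the last observation of Discussion~\ref{disc} (equality of $\tau$-syzygies forces equality of the two basis elements that are read off from their leading and next-to-leading terms), $e_{k,i}=e_{k,p}$ and $e_{k,j}=e_{k,q}$. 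Comparing blocks in $e_{k,i}=e_{k,p}$ gives $I_s=I'_s$ for $s=1,\ldots,k$ and $I_{k+1}\cup I_{k+2}=I'_{k+1}\cup I'_{k+2}$; comparing blocks in $e_{k,j}=e_{k,q}$ gives in addition $I_{k+2}=I'_{k+2}$ (these are genuine block-by-block equalities because both last blocks contain $n$). Since $I_{k+1}$ and $I_{k+2}$ are disjoint, it follows that $I_{k+1}=I'_{k+1}$ too, so $(I_1,\ldots,I_{k+2})=(I'_1,\ldots,I'_{k+2})$; hence the restriction of $\dif_{k+1}$ to $\mcb_{k+1}$ is injective, and $\lvert G_k\rvert=\lvert\dif_{k+1}(\mcb_{k+1})\rvert=\lvert\mcb_{k+1}\rvert=r_{k+1}$ follows at once.

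There is no real obstacle here, since everything is read off from already-established statements; the only point requiring a little care is checking that the equalities of cyclically ordered partitions above are literal block-by-block equalities, which is guaranteed by the normalization $n\in I_{k+2}$ (and likewise for the primed data). As an alternative route for part $(a)$ that avoids citing \eqref{inducedlm}, one can read the leading term directly off the explicit standard expression \eqref{difbasisele}–\eqref{tau-vector} in the proof of Proposition~\ref{tau}: using $\mbfh_{k-1}$ to identify $\lt(f_{k-1,i})$, the summand $(-1)^k x^{I_{k+1}\to I_{k+2}}e_{k,i}$ has strictly larger leading monomial than every other summand, so it is the leading term of $\dif_{k+1}(I_1,\ldots,I_{k+2})$.
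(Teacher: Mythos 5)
Your proof is correct, and part $(a)$ is essentially the paper's own argument: apply Proposition~\ref{tau} and read the leading term from property \eqref{inducedlm} of Discussion~\ref{disc}, using $m^k_{j,i}=(-1)^{k-1}x^{I_{k+1}\to I_{k+2}}$. Where you diverge is part $(b)$. The paper first compares leading terms via $(a)$ (getting $I_s=J_s$ for $s\leq k$ and $I_{k+1}\cup I_{k+2}=J_{k+1}\cup J_{k+2}$) and then matches the particular summand $(-1)^{k-1}x^{I_k\to I_{k+1}}(I_1,\ldots,I_{k-1},I_k\cup I_{k+1},I_{k+2})$ of one expansion against the summands of the other, which produces two possible identifications and requires ruling out the case $I_k\cup I_{k+1}=J_k$, $I_{k+2}=J_{k+1}\cup J_{k+2}$ by observing it would force $I_{k+1}=\varnothing$. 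You instead note that equal differentials give equal $\tau$-syzygies and invoke the uniqueness statement of Discussion~\ref{disc}: a $\tau$-syzygy determines its associated pair of basis elements through its leading and next-to-leading terms, so $e_{k,i}=e_{k,p}$ and $e_{k,j}=e_{k,q}$ simultaneously, and injectivity follows from the two block-by-block equalities (your remark that the normalization $n\in I_{k+2}$ makes these literal blockwise equalities is exactly the point to check, and disjointness recovers $I_{k+1}=I'_{k+1}$). This application of Discussion~\ref{disc} at stage $k$ is legitimate under $\mbfh_{k-1}$ and is in fact how the paper itself argues in Proposition~\ref{gbdesiredform} and in the parenthetical part of the proof of Theorem~\ref{main}, so nothing new is needed. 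What the two routes buy: yours eliminates the case analysis and the inspection of non-leading summands, at the price of leaning once more on the ordering-theoretic bookkeeping of Discussion~\ref{disc} (in particular that the second-highest term of the $\tau$-syzygy is $m^k_{i,j}e_{k,j}$, uncancelled by the remaining summands); the paper's version stays closer to the explicit formula \eqref{differential} for $\dif_{k+1}$ and only uses $(a)$. Your alternative derivation of $(a)$ from the standard expression in the proof of Proposition~\ref{tau} is also sound, since in the induced module ordering the monomial $x^{I_{k+1}\to I_{k+2}}e_{k,i}$ strictly dominates all other summands (the tie with $x^{I_k\to I_{k+1}}e_{k,j}$ being broken by $i>j$), but it amounts to re-deriving \eqref{inducedlm}.
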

\begin{proof}
By Proposition~\ref{tau}, and with the notations used there, and by
\eqref{inducedlm} in Discussion~\ref{disc},
\begin{eqnarray*}
\lt(\dif_{k+1}(I_1,\ldots,I_{k+2})) & = & -\lt(\tau_{k}(e_{k,i},e_{k,j}))
=-m^{k}_{j,i}e_{k,i} \\ & = & (-1)(-1)^{k-1}x^{I_{k+1}\to I_{k+2}}(I_1,\ldots,
I_{k+1}\cup I_{k+2})\\ & = & (-1)^{k}x^{I_{k+1}\to I_{k+2}}(I_1,\ldots,
I_{k+1}\cup I_{k+2}),
\end{eqnarray*}
which proves $(a)$.

Suppose that $\dif_{k+1}(I_1,\ldots ,I_{k+2})=\dif_{k+1}(J_1,\ldots
,J_{k+2})$. Hence their leading terms coincide. Using the first part
$(a)$, it follows that
\begin{eqnarray*}
&& (-1)^{k}x^{I_{k+1}\to I_{k+2}}(I_1,\ldots,I_{k},I_{k+1}\cup
  I_{k+2})= (-1)^{k}x^{J_{k+1}\to
    J_{k+2}}(J_1,\ldots,J_{k},J_{k+1}\cup J_{k+2}).
\end{eqnarray*}
Therefore, $I_s=J_s$, for every $1\leq s\leq k$, and moreover,
$I_{k+1}\cup I_{k+2}=J_{k+1}\cup J_{k+2}$. Consider the summand
\begin{eqnarray*}
(-1)^{k-1}x^{I_{k}\to I_{k+1}}(I_1,\ldots ,I_{k-1},I_{k}\cup
  I_{k+1},I_{k+2})
\end{eqnarray*} 
of $\dif_{k+1}(I_1,\ldots ,I_{k+2})$, preceding the leading term and
different from 
\begin{eqnarray*}
-x^{I_{k+2}\to I_1}(I_2,\ldots ,I_{k+1},I_1\cup I_{k+2})
\end{eqnarray*}
(see \eqref{differential} in Definition~\ref{cyccomplex} and
Remark~\ref{dif-and-enum}). On the one hand, this element coincides
with
\begin{eqnarray*}
(-1)^{k-1}x^{I_{k}\to I_{k+1}}(J_1,\ldots ,J_{k-1},I_{k}\cup
  I_{k+1},I_{k+2}). 
\end{eqnarray*}
On the other hand, this element must coincide with a summand of
$\dif_{k+1}(J_1,\ldots ,J_{k+2})$. It follows that either $I_{k}\cup
I_{k+1}=J_{k}\cup J_{k+1}$ and $I_{k+2}=J_{k+2}$, or else, $I_{k}\cup
I_{k+1}=J_{k}$ and $I_{k+2}=J_{k+1}\cup J_{k+2}$. However, the second
case cannot occur, because, since $I_{k}=J_{k}$, it would follow that
$I_{k+1}=\varnothing$, a contradiction. Thus the first case holds, which
clearly implies $I_{k+1}=J_{k+1}$ and $I_{k+2}=J_{k+2}$.
\end{proof}

\subsection{Proof of the main theorem}
$\phantom{+}$\bigskip

\noindent Now, we have all the ingredients to prove
Theorem~\ref{main}.

\begin{proof} Acording to Purpose~\ref{reduction},
we have to show ``$\mbfh_{k-1}\Rightarrow \mbfh_{k}$, for $k=1,\ldots
,n-2$''. So fix an integer $k$, with $1\leq k\leq n-2$ and suppose
that $\mbfh_{k-1}$ holds. Let us prove that $\mbfh_k$ holds. By
Proposition~\ref{gbdesiredform}, we know that there is a Gr\"obner
basis $\tilde{G}_k$, say, of $\ker(\dif_{k})$ of the form: 
\begin{eqnarray}\label{groebf}
\tilde{G}_k:=\bigcup_{i=2}^{r_{k}}\{ \tau_{k}(e_{k,i},e_{k,j})\mid
e_{k,j}\in\mcb_{k,i}\}.
\end{eqnarray}
This is a Gr\"obner basis of $\ker(\dif_{k})$ w.r.t. the monomial
ordering on $\mcc_k$ induced by the monomial ordering on $\mcc_{k-1}$
and the Gr\"obner basis $G_{k-1}$ (see also
Remark~\ref{mon-ordering}). Note that Proposition~\ref{gbdesiredform}
ensures that the cardinality of $\tilde{G}_k$ is $r_{k+1}$.

In Proposition~\ref{tau}, we have shown that, for any $(I_1,\ldots
,I_{k+2})\in\mcb_{k+1}$,
\begin{eqnarray*}
\dif_{k+1}(I_1,\ldots,I_{k+2})=-\tau_{k}(e_{k,i},e_{k,j}),
\end{eqnarray*}
where $e_{k,i}=(I_1,\ldots ,I_{k+1}\cup I_{k+2})$ and
$e_{k,j}=(I_1,\ldots,I_{k}\cup I_{k+1},I_{k+2})$. That is,
$-\dif_{k+1}(I_1,\ldots,I_{k+2})$ is a $\tau$-syzygy associated to the
$S$-vector of $f_{k-1,i}=\dif_{k}(e_{k,i})$ and
$f_{k-1,j}=\dif_{k}(e_{k,j})$. In other words, any element of
$-G_{k}=-\dif_{k+1}(\mcb_{k+1})$ is an element of
$\tilde{G}_k$. Observe that, by Corollary~\ref{ltbij}, $(b)$, the
cardinality of $G_k$ is also $r_{k+1}$. Since the set $-G_{k}$ is
included in the Gr\"obner basis $\tilde{G}_k$ of $\ker(\dif_k)$, and
both $G_k$ and $\tilde{G}_k$ have the same cardinality, it follows
that $G_k=\dif_{k+1}(\mcb_{k+1})$ is a Gr\"obner basis of
$\ker(\dif_{k})$.

(To avoid the preceding discussion using cardinalities, we can argue
as follows. Take $\tau_{k}(e_{k,i},e_{k,j})$, any element in
$\tilde{G}_k$: that is, a $\tau$-syzygy associated to the $S$-vector
of $f_{k-1,i}=\dif_k(e_{k,i})$ and $f_{k-1,j}=\dif_{k}(e_{k,j})$, with
$e_{k,j}\in\mcb_{k,i}$. Remark that, by Discussion~\ref{disc}, both
basis elements $e_{k,i}$ and $e_{k,j}$ are uniquely determined by the
syzygy $\tau_k(e_{k,i},e_{k,j})$. Write $e_{k,i}=(I_1,\ldots
,I_{k},I_{k+1})$ and $e_{k,j}=(I_1,\ldots ,I_{k-1},J_{k},J_{k+1})$,
with $J_{k}\supsetneq I_{k}$. Consider the map
$\varrho_{k,i}:\mcb_{k,i}\to \mcb_{k+1}$ introduced in
Remark~\ref{bki}. Take the element
\begin{eqnarray*}
\varrho_{k,i}(e_{k,j})=(I_1,\ldots,I_{k},J_{k}\setminus
I_{k},J_{k+1})=:(H_1,\ldots,H_k,H_{k+1},H_{k+2})\in\mcb_{k+1}.
\end{eqnarray*}
Observe that $H_s=I_s$, for all $s=1,\ldots ,k$, that
$H_{k+1}=J_{k}\setminus I_{k}$ and that $H_{k+2}=J_{k+1}$. Thus
$H_{k+1}\cup H_{k+2}=(J_{k}\setminus I_{k})\cup J_{k+1}=I_{k+1}$ and
$H_{k}\cup H_{k+1}=I_{k}\cup (J_{k}\setminus I_{k})=J_{k}$. Hence
\begin{eqnarray*}
&& (H_1,\ldots ,H_{k},H_{k+1}\cup H_{k+2})=(I_1,\ldots
  ,I_{k},I_{k+1})=e_{k,i}\mbox{ and }\\ && (H_1,\ldots
  ,H_{k-1},H_{k}\cup H_{k+1},H_{k+2})=(I_1,\ldots
  ,I_{k-1},J_{k},J_{k+1})=e_{k,j}.
\end{eqnarray*}
Then, by Proposition~\ref{tau} again,
$\dif_{k+1}(\varrho_{k,i}(e_{k,j}))=-\tau_{k}(e_{k,i},e_{k,j})$. That
is, any element of $\tilde{G}_k$ is an element of
$-G_{k}=-\dif_{k+1}(\mcb_{k+1})$. In conclusion,
$G_{k}=\dif_{k+1}(\mcb_{k+1})$ is a Gr\"obner basis of
$\ker(\dif_{k})$.)

Since $\mbfh_{k-1}$ holds, it follows that part $(a)$ of
$\mbfh_k$ also holds (recall Notation~\ref{hyphk}). Part $(b)$ of
$\mbfh_k$ follows from Corollary~\ref{ltbij}, $(a)$, and the
hypothesis that $\mbfh_{k-1}$ holds. This completes the proof.
\end{proof}

We characterize now when $\mcc_{\mathbb{G}}$ is a minimal resolution
of $\mbk[x]/I(\mcl)$. This result generalises the first main result in
\cite{ms}, namely Theorem 2 there. In a follow-up paper, we shall
investigate applications of our results, beginning with
generalisations of the remaining results of \cite{ms}, before
branching out more widely.

\begin{corollary}\label{min-resolution}
Let $\mbg$ be a strongly connected digraph or, equivalently, let $L$
be an $\icb$ matrix. Let $\mcc_{\mathbb{G}}$ be its associated $\cyc$
complex. Then
\begin{eqnarray*}
\mcc_{\mathbb{G}}:\phantom{+} 0\leftarrow \mbk[x]/I(\mcl)\leftarrow
\mcc_0=\mbk[x]\xleftarrow{\sdif_1} \mcc_1=\mbk[x]^{r_1}
\xleftarrow{\sdif_2} \cdots 
\xleftarrow{\sdif_{n-1}}\mcc_{n-1}=\mbk[x]^{r_{n-1}}\xleftarrow{} 0,
\end{eqnarray*}
is a minimal free resolution of $\mbk[x]/I(\mcl)$ if and only if
$\mbg$ is strongly complete.
\end{corollary}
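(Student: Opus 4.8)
The plan is to prove the minimality criterion by a standard Nakayama-type argument, using the structure of the resolution $\mcc_{\mathbb{G}}$ established in Theorem~\ref{main}. Recall that a graded free resolution of $\mbk[x]/I(\mcl)$ is minimal precisely when $\dif_k(\mcc_k)\subseteq \mfm\mcc_{k-1}$ for all $k=1,\ldots ,n-1$ (equivalently, the matrices of the differentials have entries in $\mfm$), so that tensoring with $\mbk=\mba/\mfm$ yields zero differentials. The case $k=1$ is slightly special because $\dif_1(\mcc_1)\subseteq\mcc_0=\mbk[x]$ is not contained in $\mfm$; what matters instead is whether $\dif_1(\mcb_1)$ is a \emph{minimal} homogeneous system of generators of $I(\mcl)$, which, given Corollary~\ref{GBI(L)}, amounts to the images of the $f_C$ under $\mba^{r_1}\to I(\mcl)$ being a minimal set, i.e.\ $\ker(\dif_1)=\dif_2(\mcc_2)\subseteq\mfm\mcc_1$; this is exactly the $k=2$ condition combined with Theorem~\ref{main}. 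So the whole statement reduces to: \emph{$\dif_k(\mcc_k)\subseteq\mfm\mcc_{k-1}$ for all $k\geq 2$ if and only if $\mbg$ is strongly complete}, together with the observation (Proposition~\ref{minGB}) that in the strongly complete case $\dif_1(\mcb_1)$ is already a minimal generating set.

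For the ``if'' direction, assume $\mbg$ is strongly complete, equivalently $L$ is a $\pcb$ matrix, so $a_{i,j}>0$ for all $i\neq j$. Then Remark~\ref{dif-m} gives immediately $\dif_k(\mcc_k)\subseteq\mfm\mcc_{k-1}$ for all $k$, since every coefficient $x^{I_t\to I_{t+1}}$ occurring in formula~\eqref{differential} is a nontrivial monomial (as $I_t,I_{t+1}$ are nonempty and all $a_{i,j}$ with $i\in I_t$, $j\in I_{t+1}$ are positive, the exponent $\sum_{j\in I_{t+1}}a_{i,j}$ is positive for each $i\in I_t$). Together with Theorem~\ref{main}, $\mcc_{\mathbb{G}}$ is then a minimal free resolution; alternatively one may invoke Proposition~\ref{minGB} for the $k=1$ end and Remark~\ref{dif-m} for $k\geq 2$.

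For the ``only if'' direction, suppose $\mbg$ is \emph{not} strongly complete; I would show $\dif_2(\mcc_2)\not\subseteq\mfm\mcc_1$, which forces $\dif_1(\mcb_1)$ to be a non-minimal generating set of $I(\mcl)$ and hence $\mcc_{\mathbb{G}}$ to be non-minimal. Here I would use Proposition~\ref{gbdesiredform} and the explicit form~\eqref{mji} of $m^1_{j,i}$ (the case $k=1$ of Lemma~\ref{superfluous}$(b)$): the coefficients appearing in the second differential are, up to sign, of the form $x^{J_1\cap I_2\to J_2}$ together with the ``$l_{C,D}$''-type monomials computed in Lemma~\ref{s-poly}. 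Since $\mbg$ is not strongly complete, there is an arc missing, i.e.\ $a_{i,j}=0$ for some $i\neq j$; I would choose the pair $C,D$ (equivalently $i,j$) so that the resulting $S$-polynomial $S(f_C,f_D)$ — and hence the corresponding Schreyer syzygy $\tau_1(e_{1,i},e_{1,j})=-\dif_2(\cdot)$ — has a \emph{unit} (degree-zero, i.e.\ not in $\mfm$) coefficient. Concretely, recall from Lemma~\ref{s-poly}$(b)$ that when $C\subsetneq D$ one has $m_{C,D}=x^{C\to G}$ with $G=D\setminus C$; if $a_{i,j}=0$ for all $i\in C$ and $j\in G$, then $x^{C\to G}=1$, so the syzygy has a unit entry, witnessing non-minimality. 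The crux is therefore to exhibit, from the failure of strong completeness, such a pair $(C,D)$ with $x^{C\to G}=1$ (or symmetrically $x^{D\to F}=1$ in the $C\not\subset D$, $D\not\subset C$ case, using Lemma~\ref{s-poly}$(a)$) — and the simplest choice is $C=\{i\}$, $D=\{i\}\cup J$ where $J$ is chosen so that $a_{i,k}=0$ for all $k\in J$; such a nonempty $J\subseteq[n-1]\setminus\{i\}$ exists exactly because $\mbg$ is not strongly complete (after possibly re-examining which vertex plays the role of $v_n$, using that the condition ``not strongly complete'' is symmetric while our enumeration singles out $v_n$). The main obstacle I anticipate is this last bookkeeping point: making sure the missing arc can be placed among the first $n-1$ indices so that it genuinely produces a unit entry in $\dif_2$ compatible with the $\srle$ enumeration and the $(\omega,\delta)$-enumeration of Assumption~\ref{echelon-omega}; once a suitable $(C,D)$ is pinned down the computation is a direct application of Lemma~\ref{s-poly} and Lemma~\ref{superfluous}. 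I would then conclude: $\mcc_{\mathbb{G}}$ is minimal $\iff$ $\dif_k(\mcc_k)\subseteq\mfm\mcc_{k-1}$ for all $k\iff\mbg$ strongly complete.
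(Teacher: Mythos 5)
Your ``if'' direction is fine and is exactly the paper's argument: Theorem~\ref{main} makes $\mcc_{\mathbb{G}}$ a resolution, and Remark~\ref{dif-m} (all coefficients $x^{I_t\to I_{t+1}}$ lie in $\mfm$ when $L$ is a $\pcb$ matrix) gives minimality. Your converse strategy --- a missing arc should force a unit entry in $\dif_2$, hence non-minimality --- is also the right idea and is in essence what the paper does; but the execution has the gap you yourself flag, and it is a genuine one. Your only explicit device is to make $m_{C,D}=x^{C\to G}=1$ with $C$ and $G=D\setminus C$ contained in $[n-1]$, which detects only missing arcs $(u,v)$ with $u,v\in[n-1]$. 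If the failure of strong completeness occurs only at arcs of the form $(u,n)$ or $(n,v)$, your construction produces nothing, and the proposed remedy of re-choosing which vertex plays the role of $v_n$ is not free: the complex $\mcc_{\mathbb{G}}$, the $\srle$, Assumption~\ref{echelon-omega} and Theorem~\ref{main} are all tied to that choice, so you would have to check that the permuted matrix is again an $\icb$ matrix put into $\delta$-block echelon form, that the re-enumerated complex is again a resolution with the same ranks $r_k$, and that minimality transfers across the variable permutation (e.g.\ via invariance of graded Betti numbers). None of that is carried out in the proposal.

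The gap closes much more cheaply, without any re-enumeration, and this is how the paper argues: use all three cyclic coefficients of $\dif_2(I_1,I_2,I_3)$, namely $x^{I_1\to I_2}$, $x^{I_2\to I_3}$ and $x^{I_3\to I_1}$, not only the first one. If $a_{u,v}=0$ with $u,v\in[n-1]$, take $(\{u\},\{v\},[n]\setminus\{u,v\})$, whose differential carries the coefficient $x_u^{a_{u,v}}=1$; if $a_{u,n}=0$, take $([n-1]\setminus\{u\},\{u\},\{n\})$, with coefficient $x^{\{u\}\to\{n\}}=x_u^{a_{u,n}}=1$; if $a_{n,v}=0$, take $(\{v\},[n-1]\setminus\{v\},\{n\})$, with coefficient $x^{\{n\}\to\{v\}}=x_n^{a_{n,v}}=1$. (The paper phrases this contrapositively: assuming minimality, it reads $\pm x_i^{a_{i,j}}$, $\pm x_i^{a_{i,n}}$ and $\pm x_n^{a_{n,i}}$ off $\dif_k$ applied to basis elements with singleton blocks in the appropriate positions, and concludes that every $a_{i,j}$ is positive.) One further small slip: $\dif_1(\mcc_1)=I(\mcl)\subseteq\mfm=\mfm\mcc_0$, so the $k=1$ minimality condition is automatic rather than ``not contained in $\mfm$''; your reduction to the conditions $\dif_k(\mcc_k)\subseteq\mfm\mcc_{k-1}$ for $k\geq 2$ is nevertheless correct.
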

\begin{proof}
Suppose that $\mbg$ is strongly complete. That $\mcc_{\mathbb{G}}$ is
a minimal free resolution of $\mbk[x]/I(\mcl)$ follows from
Theorem~\ref{main} and Remark~\ref{dif-m}.

Conversely, suppose that $\mcc_{\mathbb{G}}$ is a minimal free
resolution of $\mbk[x]/I(\mcl)$. Consider $i,j\in [n-1]$ with $i\neq
j$, together with a basis element of the form 
\begin{eqnarray*}
(\{i\},\{j\},I_3,\ldots ,I_{k+1})\in\mcb_k.
\end{eqnarray*}
Then $\pm x^{a_{i,j}}_i$ appears as a coefficient in the expression
for $\dif_k(\{i\},\{j\},I_3,\ldots ,I_{k+1})$. By hypothesis,
$x^{a_{i,j}}_i\in\mfm$, and it follows that $a_{i,j}>0$. Similarly,
$a_{j,i}>0$.

Next consider a basis element of the form $(I_1,\ldots
,I_{k-1},\{i\},\{n\})\in\mcb_k$. A similar argument shows that
$a_{i,n}>0$.

Finally, consider a basis element of the form $(\{i\},I_2,\ldots
,I_k,\{n\})\in\mcb_k$. Again, a similar argument shows that
$a_{n,i}>0$, and the result follows.
\end{proof}

\subsection{More on enumerations}
$\phantom{+}$\bigskip

\noindent Since $\dif_{k+1}:\mcb_{k+1}\to\mcc_{k}$ is injective, it is
natural to consider in $\dif_{k+1}(\mcb_{k+1})$ the enumeration
inherited by the $\srle$ enumeration of $\mcb_{k+1}$ (see
Corollary~\ref{ltbij}, $(b)$). This is not a minor matter as regards
this paper. Indeed, to choose a ``correct'' enumeration in $\mcb_{k}$,
and hence in $G_{k-1}=\dif_{k}(\mcb_{k})$, has been seen to be crucial
in the proof of Lemma~\ref{superfluous} and hence in the proof of the
main theorem, Theorem~\ref{main}. In the light of this comment, the
next remark (and Remark~\ref{nonsrle} below) may help to clarify
ideas.

\begin{remark}\label{enum}
Let us consider in $\dif_{k+1}(\mcb_{k+1})$ the enumeration inherited
by $\mcb_{k+1}$. On the other hand, let us also consider in
$\Upsilon:=\bigcup_{i=2}^{r_k}\{\tau_{k}(e_{k,i},e_{k,j})\mid
e_{k,j}\in\mcb_{k,i}\}$ the enumeration defined in
Remark~\ref{induced-srle} (where $\Upsilon$ is possibly larger than
the set of syzygies considered in Remark~\ref{induced-srle}). Then
both enumerations agree, considering that
$-\dif_{k+1}(\mcb_{k+1})=\Upsilon$ (see the proof of
Theorem~\ref{main}).
\end{remark}
\begin{proof}
Let $e_{k+1,l}=(I_1,\ldots ,I_{k+2})\in\mcb_{k+1}$ and
$\dif_{k+1}(e_{k+1,l})= -\tau_{k}(e_{k,i},e_{k,j})$, with
\begin{eqnarray*}
e_{k,i}=(I_1,\ldots ,I_{k},I_{k+1}\cup I_{k+2})\mbox{ and }
e_{k,j}=(I_1,\ldots ,I_{k-1},I_k\cup I_{k+1},I_{k+2}).
\end{eqnarray*}
Analogously, let $e_{k+1,h}=(H_1,\ldots ,H_{k+2})\in\mcb_{k+1}$ and
$\dif_{k+1}(e_{k+1,h})=-\tau_{k}(e_{k,p},e_{k,q})$, with
\begin{eqnarray*}
e_{k,p}=(H_1,\ldots ,H_k,H_{k+1}\cup H_{k+2})\mbox{ and
}e_{k,q}=(H_1,\ldots H_{k-1},H_k\cup H_{k+1},H_{k+2}).
\end{eqnarray*}
(See Proposition~\ref{tau}.) Suppose that $\dif_{k+1}(e_{k+1,h})$
precedes $\dif_{k+1}(e_{k+1,l})$ in the inherited enumeration by
$\mcb_{k+1}$, that is, $e_{k+1,h}$ precedes $e_{k+1,l}$. By
Remark~\ref{srle}, there exists an integer $s\in\{1,\ldots ,k+1\}$,
such that $H_1=I_1,\ldots ,H_{s-1}=I_{s-1}$ and $H_s$ precedes
$I_s$. If $s\leq k$, then $e_{k,p}$ precedes $e_{k,i}$. If $s=k+1$,
then $e_{k,p}=e_{k,i}$ and $e_{k,q}$ precedes $e_{k,j}$. Therefore,
$\tau_k(e_{k,p},e_{k,q})$ precedes $\tau_k(e_{k,i},e_{k,j})$,
according to the enumeration considered in Remark~\ref{induced-srle}.

Conversely, suppose that $\tau_k(e_{k,p},e_{k,q})$ precedes
$\tau_k(e_{k,i},e_{k,j})$. If $e_{k,p}$ precedes $e_{k,i}$, then there
exists an integer $s\in\{1,\ldots ,k\}$, such that
$H_1=I_1,\ldots,H_{s-1}=I_{s-1}$ and $H_{s}$ precedes $I_{s}$. This
clearly implies that $e_{k+1,h}$ precedes $e_{k+1,l}$. On the other
hand, if $e_{k,p}=e_{k,i}$ and $e_{k,q}$ precedes $e_{k,p}$, then,
since $H_1=I_1,\ldots,H_{k}=I_{k}$, necessarily $H_k\cup H_{k+1}$
precedes $I_k\cup I_{k+1}$. But, since $H_k=I_k$, it follows that
$H_{k+1}$ precedes $I_{k+1}$ and so $e_{k+1,h}$ precedes $e_{k+1,l}$,
and $\dif_{k+1}(e_{k+1,h})$ precedes $\dif_{k+1}(e_{k+1,l})$.
\end{proof}

\begin{remark}\label{nonsrle}
If we attempt to prove Theorem~\ref{main} by means of
Theorem~\ref{algorithm}, only now using an alternative but seemingly
natural enumeration of the basis elements of the successive free
modules in the resolution, we can find that the module quotients
$M_{i}(G_{k-1})$ have an `excessive' number of generators, and as a
result, the free resolution we obtain is no longer the $\cyc$ complex.

For example, consider the $n\times n$ $\pcb$ matrix $L$ considered in
\cite[Example~1, 3]{ms}:
\begin{eqnarray*}
L=\left(\begin{array}{rrrr}
3&-1&-1&-1\\
-1&3&-1&-1\\
-1&-1&3&-1\\
-1&-1&-1&3\\
\end{array}\right).
\end{eqnarray*}
Here $\nu(L)=(1,1,1,1)$. Thus each variable $x,y,z,t$ is given degree
$1$ and we endow $A=\mbk[x,y,z,t]$ with the (weighted) reverse
lexicographic ordering (see Assumptions~\ref{grading} and
\ref{wrlo}). Now, instead of enumerating according to the $\srle$, we
enumerate $\cyc_{4,2}$ as follows (see Example~\ref{srlen=4}):
\begin{eqnarray*}
\cyc_{4,2}&=&\{(1,234),(2,134),(3,124),(123,4),(12,34),(13,24),(23,14)\}.
\end{eqnarray*}
Note that in Corollary~\ref{GBI(L)}, the enumeration considered in
$\mcb_1$ is not essential for its proof. Thus it follows that
$G_0=\{f_{0,1},f_{0,2},f_{0,3},f_{0,4},f_{0,5},f_{0,5},f_{0,6},f_{0,7}\}$
is a Gr\"obner basis of $I(\mcl)$, where
\begin{eqnarray*}
&&f_{0,1}=\dif_1(e_{1,1})=\dif_1(1,234)=\underline{x^3}-yzt,\\
&&f_{0,2}=\dif_1(e_{1,2})=\dif_1(2,134)=\underline{y^3}-xzt,\\
&&f_{0,3}=\dif_1(e_{1,3})=\dif_1(3,124)=\underline{z^3}-xyt,\\ 
&&f_{0,4}=\dif_1(e_{1,4})=\dif_1(123,4)=\underline{xyz}-t^3,\\
&&f_{0,5}=\dif_1(e_{1,5})=\dif_1(12,34)=\underline{x^2y^2}-z^2t^2,\\ 
&&f_{0,6}=\dif_1(e_{1,6})=\dif_1(13,24)=\underline{x^2z^2}-y^2t^2\mbox{ and }\\
&&f_{0,7}=\dif_1(e_{1,7})=\dif_1(23,14)=\underline{y^2z^2}-x^2t^2.
\end{eqnarray*}
It is a simple check to see that ($M_1(G_0)=0$):
\begin{eqnarray*}
&&M_2(G_0)=(x^3)\mbox{ , }
M_3(G_0)=(x^3,y^3)\mbox{ , }
M_4(G_0)=(x^2,y^2,z^2)\mbox{ , }\\
&&M_5(G_0)=M_6(G_0)=M_7(G_0)=(x,y,z)=\mfm.
\end{eqnarray*}
That gives $15$ minimal generators $x^\alpha$ of the ideal quotients
$M_i(G_0)$, for $i=2,\ldots ,7$, in Theorem~\ref{algorithm}.  Thus the
Gr\"obner basis $G_1$ of the syzygies of $G_0$ would contain $15$
elements and the corresponding resolution would begin with
\begin{eqnarray*}
0\leftarrow \mbk[x]/I(\mcl)\leftarrow
\mbk[x]\leftarrow\mbk[x]^{7}\leftarrow \mbk[x]^{15}\leftarrow\cdots ,
\end{eqnarray*}
and so it would not coincide with the complex $\mcc_{L}$.
\end{remark}

\section{An example: the four-dimensional case}

In this section we write down the case $n=4$ just as an illustrative
example. This will enable the reader to visualise the details of the
proofs of our main results for this particular case.

Recall that, when $n=4$, the complex $\mcc$ has the following form
(cf. Example~\ref{cyc4} for more details).
\begin{eqnarray*}
\mcc&:&0\leftarrow
\mcc_0=\mbk[x]\xleftarrow{\sdif_{1}}\mcc_{1}=\mbk[x]^{7}
\xleftarrow{\sdif_{2}}\mcc_{2}=\mbk[x]^{12}\xleftarrow{\sdif_{3}}
\mcc_{3}=\mbk[x]^{6}\xleftarrow{} 0,
\end{eqnarray*}
where we write $\mbk[x]\equiv \mbk[x,y,z,t]$.  Moreover,
Theorem~\ref{main} says that this is a free resolution of
$\mbk[x]/I(\mcl)$. By Corollary~\ref{GBI(L)},
$G_0:=\dif_1(\mcb_1)\subset\mcc_0$ is a Gr\"obner basis of
$\dif_1(\mcc_1)=I(\mcl)=\ker(\dif_0)$. Using
Definition~\ref{cyccomplex} (see also Example~\ref{cyc4}), on
enumerating according to the $\srle$ and underlining leading terms,
the elements of $G_0$ are:
\begin{eqnarray*}
\begin{array}{l}
f_{0,1}=\dif_1(e_{1,1})=
\dif_1(123,4)=\underline{x^{a_{1,4}}y^{a_{2,4}}z^{a_{3,4}}}-t^{a_{4,4}},\\
f_{0,2}=\dif_1(e_{1,2})=
\dif_1(23,14)=\underline{y^{a_{2,1}+a_{2,4}}z^{a_{3,1}+a_{3,4}}}-
x^{a_{1,2}+a_{1,3}}t^{a_{4,2}+a_{4,3}},\\
f_{0,3}=\dif_1(e_{1,3})=
\dif_1(13,24)=\underline{x^{a_{1,2}+a_{1,4}}z^{a_{3,2}+a_{3,4}}}-
y^{a_{2,1}+a_{2,3}}t^{a_{4,1}+a_{4,3}},\\
f_{0,4}=\dif_1(e_{1,4})=
\dif_1(12,34)=\underline{x^{a_{1,3}+a_{1,4}}y^{a_{2,3}+a_{2,4}}}-
z^{a_{3,1}+a_{3,2}}t^{a_{4,1}+a_{4,2}},\\
f_{0,5}=\dif_1(e_{1,5})=
\dif_1(3,124)=\underline{z^{a_{3,3}}}-x^{a_{1,3}}y^{a_{2,3}}t^{a_{4,3}},\\
f_{0,6}=\dif_1(e_{1,6})=
\dif_1(2,134)=\underline{y^{a_{2,2}}}-x^{a_{1,2}}z^{a_{3,2}}t^{a_{4,2}},\\
f_{0,7}=\dif_1(e_{1,7})=
\dif_1(1,234)=\underline{x^{a_{1,1}}}-y^{a_{2,1}}z^{a_{3,1}}t^{a_{4,1}}.
\end{array}
\end{eqnarray*}
Let us calculate the ideal quotients $M_i(G_0)=(m^1_{1,i},\ldots
,m^1_{i-1,i})$ of Notation~\ref{m-basis-elements}.  To do that, we
need to calculate
$m^1_{j,i}=\Lcm(\lm(f_{0,j}),\lm(f_{0,i}))/\lt(f_{0,i})$, for $1\leq
j<i\leq 7$. By definition $M_1(G_0)=0$. The subsequent $M_2(G_0)$ is
equal to $(m^1_{1,2})=(x^{a_{1,4}})$. To calculate
$M_3(G_0)=(m^1_{1,3},m^1_{2,3})$, observe that
$m^1_{2,3}=y^{a_{2,1}+a_{2,4}}z^{(a_{3,1}-a_{3,2})^+}$ is a multiple
of $m^1_{1,3}=y^{a_{2,4}}$, so it follows that
$M_3(G_0)=(y^{a_{2,4}})$. Note that this is consistent with
Lemma~\ref{superfluous} and Proposition~\ref{gbdesiredform}. Indeed,
$e_{1,3}=(13,14)$ and
\begin{eqnarray*}
\mcb_{1,3}=\{(J_1,J_2)\in\mcb_1\mid J_1\supsetneq \{1,3\}\}.
\end{eqnarray*}
Since $\{1,2,3\}\supsetneq \{1,3\}$, then
$e_{1,1}=(123,4)\in\mcb_{1,3}$ and, since $\{2,3\}\not\supset
\{1,3\}$, then $e_{1,2}\not\in\mcb_{1,3}$. In other words, $m^1_{2,3}$
is superfluous. The next $m^1_{j,i}$ are presented in the following
tables:
\begin{eqnarray*}
\begin{array}{|c|c|c|}\hline 
m^1_{j,i}&i=4&i=5\\\hline
j=1&z^{a_{3,4}}&x^{a_{1,4}}y^{a_{2,4}}\\\hline
j=2&y^{(a_{2,1}-a_{2,3})^+}z^{a_{3,1}+a_{3,4}}&
y^{a_{2,1}+a_{2,4}}\\\hline
j=3&x^{(a_{1,2}-a_{1,3})^+}z^{a_{3,2}+a_{3,4}}&x^{a_{1,2}+a_{1,4}}\\\hline
j=4&&x^{a_{1,3}+a_{1,4}}y^{a_{2,3}+a_{2,4}}\\\hline
j=5&&\\\hline 
j=6&&\\\hline
\end{array}
\end{eqnarray*}
\begin{eqnarray*}
\begin{array}{|c|c|c|c|c|}\hline 
m^1_{j,i}&i=6&i=7\\\hline
j=1&x^{a_{1,4}}z^{a_{3,4}}&y^{a_{2,4}}z^{a_{3,4}}\\\hline
j=2&z^{a_{3,1}+a_{3,4}}&y^{a_{2,1}+a_{2,4}}z^{a_{3,1}+a_{3,4}}\\\hline
j=3&x^{a_{1,2}+a_{1,4}}z^{a_{3,2}+a_{3,4}}&z^{a_{3,2}+a_{3,4}}\\\hline
j=4&x^{a_{1,3}+a_{1,4}}&y^{a_{2,3}+a_{2,4}}\\\hline
j=5&z^{a_{3,3}}&z^{a_{3,3}}\\\hline 
j=6&&y^{a_{2,2}}\\\hline
\end{array}
\end{eqnarray*}$\phantom{+}$
Similarly, $M_4(G_0)=(m^1_{1,4},m^1_{2,4},m^1_{3,4})$ is generated by
the monomials in the column labelled ``$i=4$''. But $m^1_{2,4}$ and
$m^1_{3,4}$ are multiples of $m^1_{1,4}$. Therefore,
$M_4(G_0)=(z^{a_{3,4}})$. Again, this is consistent with
Proposition~\ref{gbdesiredform}, since $e_{1,2}$ and $e_{1,3}$ are not
in $\mcb_{1,4}$, so they are superfluous. As for $M_5(G_0)$, note that
$e_{1,1},e_{1,2},e_{1,3}$ are in $\mcb_{1,5}$, but $e_{1,4}$ is
not. Thus
$M_5(G_0)=(x^{a_{1,4}}y^{a_{2,4}},y^{a_{2,1}+a_{2,4}},x^{a_{1,2}+a_{1,4}})$. Similarly,
$M_6(G_0)=(x^{a_{1,4}}z^{a_{3,4}},z^{a_{3,1}+a_{3,4}},x^{a_{1,3}+a_{1,4}})$
and
$M_7(G_0)=(y^{a_{2,4}}z^{a_{3,4}},z^{a_{3,2}+a_{3,4}},y^{a_{2,3}+a_{2,4}})$.
Observe that there are exactly twelve monomial generators, while
$\mcc_2$ is precisely the free module of rank $12$, which is
consistent with the equality
$r_2=\sum_{i=2}^{r_1}\lvert\mcb_{1,i}\rvert$ in
Proposition~\ref{gbdesiredform}. Concretely, the set of $12$ monomial
generators of the ideal quotients $M_i(G_0)$ are:
\begin{eqnarray*} 
\bigcup_{i=2}^{7}\{m^1_{j,i}\mid e_{1,j}\in\mcb_{1,i}\}= \{
  m^1_{1,2},m^1_{1,3},m^1_{1,4},m^1_{1,5},m^1_{2,5},m^1_{3,5},m^1_{1,6},m^1_{2,6},
  m^1_{4,6},m^1_{1,7},m^1_{3,7},m^1_{4,7}\}.
\end{eqnarray*}
Observe that all these $m^1_{j,i}$ are different provided that $L$ is
a $\pcb$ matrix, which is coherent with Remark~\ref{min-m}. On the
other hand, if $L$ is an $\icb$ matrix, which is not a $\pcb$ matrix,
then there may be coincidences (e.g., if $a_{1,4}=0$ and $a_{2,4}=0$,
then $m^1_{1,2}$ and $m^1_{1,3}$ are equal to $1$).

Each monomial generator $m^1_{j,i}$ of $M_i(G_0)$ induces a
corresponding $\tau$-syzygy element $\tau_1(e_{1,i},e_{1,j})$
associated to the $S$-vector of $f_{0,i}$ and $f_{0,j}$. Then
Theorem~\ref{algorithm} states that
\begin{eqnarray*}
&&\{\tau_1(e_{1,2},e_{1,1}),\tau_1(e_{1,3},e_{1,1}),\tau_1(e_{1,4},e_{1,1}),
    \tau_1(e_{1,5},e_{1,1}),\tau_1(e_{1,5},e_{1,2}),\tau_1(e_{1,5},e_{1,3}),\\&&
    \tau_1(e_{1,6},e_{1,1}),\tau_1(e_{1,6},e_{1,2}),\tau_1(e_{1,6},e_{1,4}),
    \tau_1(e_{1,7},e_{1,1}),\tau_1(e_{1,7},e_{1,3}),\tau_1(e_{1,7},e_{1,4})\}
\end{eqnarray*}
is a Gr\"obner basis of $\ker(\dif_1)\subset\mcc_1$ w.r.t. the monomial
ordering on $\mcc_1$ induced by the $\wrlo$ on $\mcc_0=\mbk[x]$ and
the Gr\"obner basis $G_0$. Note that we have enumerated this Gr\"obner
basis according to Remark~\ref{induced-srle}. The proof of
Theorem~\ref{main} and Remark~\ref{enum} say that the following is an
equality of enumerated sets:
\begin{eqnarray*}
G_1:=\dif_2(\mcb_2)&=-\{& \tau_1(e_{1,2},e_{1,1}),
\tau_1(e_{1,3},e_{1,1}),\tau_1(e_{1,4},e_{1,1}),
\tau_1(e_{1,5},e_{1,1}), \\ &&
\tau_1(e_{1,5},e_{1,2}),\tau_1(e_{1,5},e_{1,3}),
\tau_1(e_{1,6},e_{1,1}),\tau_1(e_{1,6},e_{1,2}), \\ &&
\tau_1(e_{1,6},e_{1,4}), \tau_1(e_{1,7},e_{1,1}),
\tau_1(e_{1,7},e_{1,3}),\tau_1(e_{1,7},e_{1,4})\}.
\end{eqnarray*}
More concretely (recall Example~\ref{cyc4}), and underlining the
leading terms (using either \eqref{inducedlm} or Theorem~\ref{main}):
\begin{eqnarray*}
\begin{array}{lcl}
f_{1,1} & = & \dif_{2}(e_{2,1})=\dif_2(23,1,4)\\ & = &
y^{a_{2,1}}z^{a_{3,1}}e_{1,1}-\underline{x^{a_{1,4}}e_{1,2}}-
t^{a_{4,2}+a_{4,3}}e_{1,7}=-\tau_1(e_{1,2},e_{1,1}),\\
f_{1,2} & = & \dif_{2}(e_{2,2})=\dif_2(13,2,4)= \\ & = &
x^{a_{1,2}}z^{a_{3,2}}e_{1,1}-\underline{y^{a_{2,4}}e_{1,3}}-
t^{a_{4,1}+a_{4,3}}e_{1,6}=-\tau_1(e_{1,3},e_{1,1}),\\
f_{1,3} & = & \dif_{2}(e_{2,3})=\dif_2(12,3,4)= \\ & = &
x^{a_{1,3}}y^{a_{2,3}}e_{1,1}-\underline{z^{a_{3,4}}e_{1,4}}-
t^{a_{4,1}+a_{4,2}}e_{1,5}=-\tau_1(e_{1,4},e_{1,1}),\\
f_{1,4} & = & \dif_{2}(e_{2,4})=\dif_2(3,12,4)= \\ & = &
z^{a_{3,1}+a_{3,2}}e_{1,1}-\underline{x^{a_{1,4}}y^{a_{2,4}}e_{1,5}}-
t^{a_{4,3}}e_{1,4}=-\tau_1(e_{1,5},e_{1,1}),\\
f_{1,5} & = & \dif_{2}(e_{2,5})=\dif_2(3,2,14)=\\ & = &
z^{a_{3,2}}e_{1,2}-\underline{y^{a_{2,1}+a_{2,4}}e_{1,5}}-
x^{a_{1,3}}t^{a_{4,3}}e_{1,6}=-\tau_1(e_{1,5},e_{1,2}),\\
f_{1,6} & = & \dif_{2}(e_{2,6})=\dif_2(3,1,24)=\\ & = &
z^{a_{3,1}}e_{1,3}-\underline{x^{a_{1,2}+a_{1,4}}e_{1,5}}-
y^{a_{2,3}}t^{a_{4,3}}e_{1,7}=-\tau_1(e_{1,5},e_{1,3}),\\
f_{1,7} & = & \dif_{2}(e_{2,7})=\dif_2(2,13,4)= \\ & = &
y^{a_{2,1}+a_{2,3}}e_{1,1}-\underline{x^{a_{1,4}}z^{a_{3,4}}e_{1,6}}-
t^{a_{4,2}}e_{1,3}=-\tau_1(e_{1,6},e_{1,1}),\\ 
f_{1,8} & = & \dif_{2}(e_{2,8})=\dif_2(2,3,14)=\\ & = &
y^{a_{2,3}}e_{1,2}-\underline{z^{a_{3,1}+a_{3,4}}e_{1,6}}-
x^{a_{1,2}}t^{a_{4,2}}e_{1,5}=-\tau_1(e_{1,6},e_{1,2}),\\
f_{1,9} & = & \dif_{2}(e_{2,9})=\dif_2(2,1,34)=\\ & = &
y^{a_{2,1}}e_{1,4}-\underline{x^{a_{1,3}+a_{1,4}}e_{1,6}}-
z^{a_{3,2}}t^{a_{4,2}}e_{1,7}=-\tau_1(e_{1,6},e_{1,4}),\\
f_{1,10} & = & \dif_{2}(e_{2,10})=\dif_2(1,23,4)=\\ & = &
x^{a_{1,2}+a_{1,3}}e_{1,1}-\underline{y^{a_{2,4}}z^{a_{3,4}}e_{1,7}}-
t^{a_{4,1}}e_{1,2}=-\tau_1(e_{1,7},e_{1,1}),\\
f_{1,11} & = & \dif_{2}(e_{2,11})=\dif_2(1,3,24)=\\ & = &
x^{a_{1,3}}e_{1,3}-\underline{z^{a_{3,2}+a_{3,4}}e_{1,7}}-
y^{a_{2,1}}t^{a_{4,1}}e_{1,5}=-\tau_2(e_{1,7},e_{1,3}),\\
f_{1,12} & = & \dif_{2}(e_{2,12})=\dif_2(1,2,34)=\\ & = &
x^{a_{1,2}}e_{1,4}-\underline{y^{a_{2,3}+a_{2,4}}e_{1,7}}-
z^{a_{3,1}}t^{a_{4,1}}e_{1,6}=-\tau_1(e_{1,7},e_{1,4}).
\end{array}
\end{eqnarray*}
Let us focus on the first equality. Since $\cyc$ is a complex,
$\dif_1(f_{1,1})=\dif_1(\dif_2(e_{2,1}))=0$. Hence
\begin{eqnarray*}
x^{a_{1,4}}\dif_1(e_{1,2})-y^{a_{2,1}}z^{a_{3,1}}\dif_1(e_{1,1})=
-t^{a_{4,2}+a_{4,3}}\dif_1(e_{1,7}).
\end{eqnarray*}
In other words, 
\begin{eqnarray}\label{standard-exp21}
x^{a_{1,4}}f_{0,2}-y^{a_{2,1}}z^{a_{3,1}}f_{0,1}=-t^{a_{4,2}+a_{4,3}}f_{0,7}.
\end{eqnarray}
On one hand,
$x^{a_{1,4}}f_{0,2}-y^{a_{2,1}}z^{a_{3,1}}f_{0,1}=S(f_{0,2},f_{0,1})$
because $m^1_{1,2}=x^{a_{1,4}}$ and
$m^1_{2,1}=y^{a_{2,1}}z^{a_{3,1}}$. On the other hand,
\eqref{standard-exp21} is a standard expression of
$S(f_{0,2},f_{0,1})$ w.r.t. the $\wrlo$ on $\mcc_0$ and with respect
to $G_0$, and with remainder zero. Indeed, the $S$-vector of $f_{0,2}$
and $f_{0,1}$ is defined so as to cancel the leading terms of
$f_{0,2}$ and $f_{0,1}$. Thus on the left hand side we have just a
binomial, and hence similarly on the right hand side. Therefore, the
leading term of the right hand side must necessarily be equal to the
leading term of the left hand side. This proves that
\eqref{standard-exp21} is a standard expression. One deduces that
$\tau_1(e_{1,2},e_{1,1})=x^{a_{1,4}}e_{1,2}-
y^{a_{2,1}}z^{a_{3,1}}e_{1,1}+t^{a_{4,2}+a_{4,3}}e_{1,7}$ is a
$\tau$-syzygy associated to the $S$-vector of $f_{0,2}$ and $f_{0,1}$.
So $f_{1,1}=\dif_2(e_{2,1})=-\tau_1(e_{1,2},e_{1,1})$, which is
consistent with Proposition~\ref{tau} and Theorem~\ref{main}.

The remainder of the equalities can be treated in the same manner, so
we deduce that $G_1=\dif_2(\mcb_2)$ is a Gr\"obner basis of
$\ker(\dif_1)$ and
\begin{eqnarray*}
{\rm Im}(\dif_2)=\langle\dif_{2}(\mcb_{2})\rangle=\langle
\tau_1(e_{1,i},e_{1,j})\mid i=2,\ldots ,7\mbox{ and
  }e_{1,j}\in\mcb_{1,i} \rangle=\ker(\dif_{1}),
\end{eqnarray*}
and the complex $\mcc$ is exact in degree $1$, as shown in
Theorem~\ref{main}.

We next find the syzygies of $G_1=\dif_2(\mcb_2)=\{f_{1,1},\ldots,
f_{1,12}\}$. To do that, we compute the corresponding module quotients
of Notation~\ref{m-basis-elements}. By definition, $M_1(G_1)=0$. Since
the least common multiple of two monomial terms with different basis
elements is zero, it follows that $M_2(G_1)$, $M_3(G_1)$ and
$M_4(G_1)$ are zero. For the same reason, $M_7(G_1)=0$ and
$M_{10}(G_1)=0$.  Note that $M_5(G_1)$ is generated by
$m^2_{4,5}=-x^{a_{1,4}}$. (Remark this negative sign, which will be
the reason for the negative sign in the subsequent equality
$\dif_3(e_{3,1})=-\tau_2(e_{2,5},e_{2,4})$.) Next,
$M_6(G_1)=(m^2_{4,6},m^2_{5,6})$. Since $e_{2,5}\not\in\mcb_{2,6}$,
then $m^2_{5,6}$ is superfluous. Thus
$M_6(G_1)=(m^2_{4,6})=(y^{a_{2,4}})$. It is easy to check that the
nonzero module quotients $M_i(G_1)$ are:
\begin{eqnarray*}
&&M_5(G_1)=(m^2_{4,5})=(x^{a_{1,4}})\mbox{ , }
M_6(G_1)=(m^2_{4,6})=(y^{a_{2,4}})\mbox{ , }\\
&&M_8(G_1)=(m^2_{7,8})=(x^{a_{1,4}})\mbox{ , }
M_9(G_1)=(m^2_{7,9})=(z^{a_{3,4}})\mbox{ , }\\
&&M_{11}(G_1)=(m^2_{10,11})=(y^{a_{2,4}})\mbox{ and }
M_{12}(G_1)=(m^2_{10,12})=(z^{a_{3,4}}).
\end{eqnarray*}
Observe that there are exactly six monomial generators, while $\mcc_3$
is precisely the free module of rank $6$, which is consistent with the
equality $r_3=\sum_{i=2}^{r_2}\lvert\mcb_{2,i}\rvert$ in
Proposition~\ref{gbdesiredform}. Concretely, the set of $6$ monomial
generators of the ideal quotients $M_i(G_1)$ are:
\begin{eqnarray*}
\bigcup_{i=2}^{12}\{m^2_{j,i}\mid e_{2,j}\in\mcb_{2,i}\}=\{
m^2_{4,5},m^2_{4,6},m^2_{7,8},m^2_{7,9},m^2_{10,11},m^2_{10,12}\}.
\end{eqnarray*}
Each monomial generator $m^2_{j,i}$ of $M_i(G_1)$ produces a
corresponding $\tau$-syzygy element $\tau_2(e_{2,i},e_{2,j})$
associated to the $S$-vector of $f_{1,i}$ and $f_{1,j}$. Then
Theorem~\ref{algorithm} affirms that
\begin{eqnarray*}
\{\tau_2(e_{2,5},e_{2,4}),\tau_2(e_{2,6},e_{2,4}),\tau_2(e_{2,8},e_{2,7}),
\tau_2(e_{2,9},e_{2,7}),\tau_2(e_{2,11},e_{2,10}),\tau_2(e_{2,12},e_{2,10})\}
\end{eqnarray*}
is a Gr\"obner basis of $\ker(\dif_2)\subset\mcc_2$. Again, we have
enumerated this Gr\"obner basis according to Remark~\ref{induced-srle}.
Moreover, Theorem~\ref{main} and Remark~\ref{enum} say that the
following is an equality of enumerated sets:
\begin{eqnarray*}
G_2 & := & \dif_3(\mcb_3)\\ & = &
-\{\tau_2(e_{2,5},e_{2,4}),\tau_2(e_{2,6},e_{2,4}),\tau_2(e_{2,8},e_{2,7}),
\tau_2(e_{2,9},e_{2,7}),\tau_2(e_{2,11},e_{2,10}),\tau_2(e_{2,12},e_{2,10})\}.
\end{eqnarray*}

More concretely (recall Example~\ref{cyc4}), and underlining the
leading terms (using either \eqref{inducedlm} or Theorem~\ref{main}):

\begin{eqnarray*}
\begin{array}{lcl}
f_{2,1} & = & \dif_3(e_{3,1})=\dif_3(3,2,1,4)\\ & = &
z^{a_{3,2}}e_{2,1}-y^{a_{2,1}}e_{2,4}+
\underline{x^{a_{1,4}}e_{2,5}}-t^{a_{4,3}}e_{2,9}=-\tau_2(e_{2,5},e_{2,4}),\\ 
f_{2,2} & = & \dif_3(e_{3,2})=\dif_3(3,1,2,4)= \\ & = &
z^{a_{3,1}}e_{2,2}-x^{a_{1,2}}e_{2,4}+
\underline{y^{a_{2,4}}e_{2,6}}-t^{a_{4,3}}e_{2,12}
=-\tau_2(e_{2,6},e_{2,4}),\\ 
f_{2,3} & = & \dif_3(e_{3,3})=\dif_3(2,3,1,4)= \\ & = &
y^{a_{2,3}}e_{2,1}-z^{a_{3,1}}e_{2,7}+
\underline{x^{a_{1,4}}e_{2,8}}-t^{a_{4,2}}e_{2,6}
=-\tau_2(e_{2,8},e_{2,7}),
\end{array}
\end{eqnarray*}
\begin{eqnarray*}
\begin{array}{lcl} 
f_{2,4} & = & \dif_3(e_{3,4})=\dif_3(2,1,3,4)= \\ & = &
y^{a_{2,1}}e_{2,3}-x^{a_{1,3}}e_{2,7}
+\underline{z^{a_{3,4}}e_{2,9}}-t^{a_{4,2}}e_{2,11}
=-\tau_2(e_{2,9},e_{2,7}),\\
f_{2,5} & = & \dif_3(e_{3,5})=\dif_3(1,3,2,4)= \\ & = &
x^{a_{1,3}}e_{2,2}-z^{a_{3,2}}e_{2,10}+
\underline{y^{a_{2,4}}e_{2,11}}-t^{a_{4,1}}e_{2,5}
=-\tau_2(e_{2,11},e_{2,10}),\\
f_{2,6} & = & \dif_3(e_{3,6})=\dif_3(1,2,3,4)= \\ & = &
x^{a_{1,2}}e_{2,3}-y^{a_{2,3}}e_{2,10}+
\underline{z^{a_{3,4}}e_{2,12}}-t^{a_{4,1}}e_{2,8}
=-\tau_2(e_{2,12},e_{2,10}).
\end{array}
\end{eqnarray*}

Again, let us focus on the first equality. Since $\cyc$ is a complex,
$\dif_2(f_{2,1})=\dif_2(\dif_3(e_{3,1}))=0$. Hence
\begin{eqnarray*}
-x^{a_{1,4}}\dif_{2}(e_{2,5})+y^{a_{2,1}}\dif_{2}(e_{2,4})=
z^{a_{3,2}}\dif_2(e_{2,1})-t^{a_{4,3}}\dif_2(e_{2,9}).
\end{eqnarray*}
In other words, 
\begin{eqnarray}\label{standard-exp31}
-x^{a_{1,4}}f_{1,5}+y^{a_{2,1}}f_{1,4}=z^{a_{3,2}}f_{1,1}-t^{a_{4,3}}f_{1,9}.
\end{eqnarray}
Note that $-x^{a_{1,4}}f_{1,5}+y^{a_{2,1}}f_{1,4}=S(f_{1,5},f_{1,4})$,
because $m^2_{4,5}=-x^{a_{1,4}}$ and $m^2_{5,4}=-y^{a_{2,1}}$. On the
other hand, \eqref{standard-exp31} is a standard expression of
$S(f_{1,5},f_{1,4})$ with respect to $G_1$, with remainder
zero. Indeed, the $S$-vector of $f_{1,5}$ and $f_{1,4}$ is defined so
as to cancel the leading terms of $f_{1,5}$ and $f_{1,4}$. Recall that
\begin{eqnarray*}
&&f_{1,5}=z^{a_{3,2}}e_{1,2}-\underline{y^{a_{2,1}+a_{2,4}}e_{1,5}}-
x^{a_{1,3}}t^{a_{4,3}}e_{1,6},\\
&&f_{1,4}=z^{a_{3,1}+a_{3,2}}e_{1,1}-\underline{x^{a_{1,4}}y^{a_{2,4}}e_{1,5}}-
t^{a_{4,3}}e_{1,4},\\
&&f_{1,1}=y^{a_{2,1}}z^{a_{3,1}}e_{1,1}-\underline{x^{a_{1,4}}e_{1,2}}-
t^{a_{4,2}+a_{4,3}}e_{1,7},\\
&&f_{1,9}=y^{a_{2,1}}e_{1,4}-\underline{x^{a_{1,3}+a_{1,4}}e_{1,6}}-
z^{a_{3,2}}t^{a_{4,2}}e_{1,7}.
\end{eqnarray*}
Note that the leading term of $z^{a_{3,2}}f_{1,1}$ and the leading
term of $t^{a_{4,3}}f_{1,9}$ coincide with the two summands of
$-x^{a_{1,4}}f_{1,5}-\lt(-x^{a_{1,4}}f_{1,5})$, as stated in the proof
Proposition~\ref{tau}. It follows that \eqref{standard-exp31} is a
standard expression, so one deduces that
\begin{eqnarray*}
\tau_2(e_{2,5},e_{2,4})=-x^{a_{1,4}}e_{2,5}+y^{a_{2,1}}e_{2,4}-z^{a_{3,2}}e_{2,1}
+t^{a_{4,3}}e_{2,9}
\end{eqnarray*}
is a $\tau$-syzygy associated to the $S$-vector of $f_{1,5}$ and
$f_{1,4}$. Thus, $f_{2,1}=-\tau_2(e_{2,5},e_{2,4})$, which is
consistent with Proposition~\ref{tau} and Theorem~\ref{main}. 

The remainder of the equalities can be treated in the same manner, so
we deduce that $G_2=\dif_3(\mcb_3)$ is a Gr\"obner basis of
$\ker(\dif_2)$ and
\begin{eqnarray*}
{\rm Im}(\dif_3)=\langle\dif_{3}(\mcb_{3})\rangle=\langle
\tau_2(e_{2,i},e_{2,j})\mid i=2,\ldots ,12\mbox{ and
  }e_{2,j}\in\mcb_{2,i} \rangle=\ker(\dif_{2}),
\end{eqnarray*}
and the complex $\mcc$ is exact in degree $2$, as shown in
Theorem~\ref{main}.

Finally, observe that, since all the leading terms of $f_{2,i}$ have
distinct basis elements, then the module quotients $M_i(G_2)$ are all
zero (see Notation~\ref{m-basis-elements}). Therefore, there are no
syzygies among the elements of $G_2$ and $\dif_3:\mcc_3\to\mcc_2$ is
injective, from which follows the exactness of the $\mcc$ complex in
degree $3$, which is consistent with Remark~\ref{exactnessn-1}.

\bigskip
\bigskip

\textit{Acknowledgements.} In developing this research, we have
benefitted from the hospitality of Departament de Matem\`{a}tiques,
Universitat Polit\`{e}cnica de Catalunya; the School of Mathematics,
University of Edinburgh; and ICMS, Edinburgh. We gratefully
acknowledge financial support from the research grant MTM2015-69135-P
and also grants from ICMS, Edinburgh, under their `Research in Groups'
scheme.

In particular, we would like to thank F.O. Schreyer for the stimulus
provided by the several informative conversations we had at the June
$2015$ meeting on ``Minimal free resolutions, Betti numbers, and
combinatorics'', held at ICMS, Edinburgh.

We would also like to thank the referees for their helpful and
interesting reports which have improved the present work.

\small{

}


\vspace{1cm}

{\footnotesize\sc

\noindent School of Mathematics, University of Edinburgh, James Clerk
Maxwell Building, \newline Peter Guthrie Tait Road, Edinburgh EH9 3FD,
Scotland. 

\vspace{0.4cm}

\noindent Departament de Matem\`atiques, ETSEIB, Universitat
Polit\`ecnica de Catalunya. \newline Diagonal 647, ETSEIB, 08028
Barcelona, Catalunya.  {\em francesc.planas@upc.edu}
}

\end{document}